\author{Christian Bick${}^{1,2,3,4}$ and Tobias Böhle${}^5$ and Christian Kuehn${}^{5,6}$}
\title{Multi-Population Phase Oscillator Networks\\ with Higher-Order Interactions}
\date{{\small
${}^1$Department of Mathematics, University of Exeter, Exeter EX4 4QF, United Kingdom\\
${}^2$Institute for Advanced Study, Technical University of Munich, Lichtenbergstr. 2, 85748 Garching, Germany\\
${}^3$Department of Mathematics, Vrije Universiteit Amsterdam, De Boelelaan 1111, Amsterdam, the Netherlands\\
${}^4$Mathematical Institute, University of Oxford, Oxford OX2 6GG, United Kingdom\\
${}^5$Technical University of Munich, Department of Mathematics (M8), Boltzmannstr.~3, 85748 Garching b.~M\"unchen, Germany\\
${}^6$Complexity Science Hub Vienna, Josefst\"adter Str.~39, 1080 Vienna, Austria}\\
\medskip
\today}
\newcommand{\R}{\mathbb{R}} % reelle
\newcommand{\C}{\mathbb{C}} % komplexe
\newcommand{\Z}{\mathbb{Z}} % ganze
\newcommand{\N}{\mathbb{N}} % natürliche
\newcommand{\B}{\mathbb{B}} % Ball
\renewcommand{\S}{\mathbb{S}} % sphäre
\renewcommand{\d}{\mathrm{d}}
\newcommand{\snorm}[1]{\lvert #1 \rvert_\S}
\newcommand{\norm}[1]{\left\lVert #1 \right\rVert}
\newcommand{\abs}[1]{\left\lvert #1 \right\rvert}
\renewcommand{\Re}{\operatorname{Re}}
\renewcommand{\Im}{\operatorname{Im}}
\theoremstyle{definition}
\newtheorem{definition}{Definition}[section]		%counting sectionwise. other options would be [subsection]
\newtheorem{example}[definition]{Example}			%sharing counter with definition
\theoremstyle{plain} 	%default
\newtheorem{lemma}[definition]{Lemma}				%sharing counter with definition
\newtheorem{corollary}[definition]{Corollary}		%sharing counter with definition
\newtheorem{theorem}[definition]{Theorem}			%sharing counter with definition
\newtheorem{proposition}[definition]{Proposition}	%sharing counter with definition
\theoremstyle{remark}
\newtheorem{remark}[definition]{Remark}				%sharing counter with definition
\theoremstyle{plain}
\newtheorem{notation}[definition]{Notation}			%sharing counter with definition
\numberwithin{equation}{section}
\begin{document}

\maketitle
%\tableofcontents

\hrule
\paragraph{Abstract.}
The classical Kuramoto model consists of finitely many pairwise coupled oscillators on the circle. In many applications a simple pairwise coupling is not sufficient to describe real-world phenomena as higher-order (or group) interactions take place. Hence, we replace the classical coupling law with a very general coupling function involving higher-order terms. Furthermore, we allow for multiple populations of oscillators interacting with each other through a very general law. In our analysis, we focus on the characteristic system and the mean-field limit of this generalized class of Kuramoto models. While there are several works studying particular aspects of our program, we propose a general framework to work with all three aspects (higher-order, multi-population, and mean-field) simultaneously. Assuming identical oscillators in each population, we derive equations for the evolution of oscillator populations in the mean-field limit. First, we clarify existence and uniqueness of our set of characteristic equations, which are formulated in the space of probability measures together with the bounded-Lipschitz metric. Then, we investigate dynamical properties within the framework of the characteristic system. We identify invariant subspaces and stability of the state, in which all oscillators are synchronized within each population. Even though it turns out that this so called all-synchronized state is never asymptotically stable, under some conditions and with a suitable definition of stability, the all-synchronized state can be proven to be at least locally stable. In summary, our work provides a rigorous mathematical framework upon which the further study of multi-population higher-order coupled particle systems can be based.\vspace{-3mm}
\paragraph{Keywords.} Stability Analysis, Characteristic System, Mean-Field, Higher-Order Interactions, Synchronization, Kuramoto Model\vspace{-3mm}
\paragraph{Mathematics Subject Classification.} 37L99, 35Q83, 45K05\vspace{4mm}

\hrule

%===========================================================================================================================================
%===========================================================================================================================================
\section{Introduction}\label{sec:introduction}
%===========================================================================================================================================
%===========================================================================================================================================

Interacting oscillatory processes are abundant in science and technology, whether it is pacemaker cells in the heart~\cite{Strogatz1993}, neural networks in the brain~\cite{Walker1969}, the synchronization of chemical oscillators~\cite{Kiss2002}, Josephson junctions~\cite{Wiesenfeld1995}, synchronous flashing fireflies~\cite{Buck1968}, stock prices in financial markets~\cite{DalMasoPeron2011}, and even a group of people causing a bridge to swing by marching over it in step~\cite{Strogatz2005}. From a mathematical perspective, many such systems can be described as networks of coupled phase oscillators: While each node in the network is a simple oscillatory process whose state is given by a single phase-like variable on the circle, the network dynamics---the collective dynamics of all nodes---can be quite intricate.
Probably the most prominent example of collective network dynamics is synchrony when all nodes evolve in unison~\cite{Strogatz2004}. Synchrony can come in many forms, whether oscillators synchronize in phase or in frequency. Synchrony may be global across the network or may be localized in part of the network to give rise to synchrony patterns. Synchrony in its many forms often is also relevant for the function of a network dynamical system: In neural networks, synchrony is linked to cognitive function as well as disease~\cite{Uhlhaas2006}.

A classical question in network dynamical systems is how the network structure and functional interactions shape the collective network dynamics.
The Kuramoto model has been instrumental in understanding synchronization in coupled oscillators~\cite{Kuramoto,Strogatz2000,Acebron2005}. In the case of~$N$ identical oscillators, the state $\theta_k\in\S:=\R/2\pi\Z$ of oscillator~$k\in\{ 1,\dotsc,N\}$ evolves according to
\begin{equation}
    \label{eq:Kuramoto}
    \dot\theta_k = \omega + \frac{1}{N}\sum_{j=1}^N\sin(\theta_j-\theta_k),
\end{equation}
where $\omega\in\R$ is the intrinsic frequency of the oscillators. To understand (global) synchronization, the Kuramoto model makes several simplifying assumptions: The network is homogeneous and interactions are pairwise and all-to-all, mediated by the sine of the phase differences. The dynamics of~\eqref{eq:Kuramoto} are particularly simple: The dynamics are effectively two-dimensional and for generic initial conditions all oscillators synchronize~\cite{Watanabe1994}.

However, many real-world network dynamical systems are more complex and cannot be captured using the Kuramoto model. 
First, one needs to consider interaction functions that contain more than a single harmonic~\cite{Daido1994}, for example if the interactions are state-dependent~\cite{Ashwin2019}. Considering a general coupling function $g:\S\to\R$ breaks the degeneracy and chaotic dynamics are possible even for small oscillators~\cite{Bick2011}. 
Second, many networks that arise in real-world systems are not homogeneous or all-to-all coupled but have some form of modularity or community structure~\cite{Girvan2002,Newman2006}: There are different communities or populations that are characterized by the property that the coupling within a population is different from the coupling between populations. Even if the populations are identical, multi-population oscillator networks give rise to a variety of synchrony patterns; see~\cite{Bick2018c} for a recent review.
Third, classical network dynamical systems, such as the Kuramoto model~\eqref{eq:Kuramoto}, assume that interactions take place in terms of pairs: The influence of two nodes on a third is simply the sum of two individual contributions.
For example, Skardal and Arenas~\cite{SkardalArenas} consider the phase oscillator dynamics given by
\begin{align}\label{eq:System_Skardal}
	\dot\theta_i = \omega + \frac{K_1}{N}\sum_{j=1}^{N}\sin(\theta_j-\theta_i) + \frac{K_2}{N^2}\sum_{j=1}^{N}\sum_{l=1}^{N}\sin(2\theta_j-\theta_l-\theta_i) + \frac{K_3}{N^3}\sum_{j=1}^{N}\sum_{l=1}^{N}\sum_{m=1}^{N}\sin(\theta_j-\theta_l+\theta_m-\theta_i)
\end{align}
with nonadditive interactions that involve triplets and quadruplets of oscillators. Indeed, nonpairwise higher-order interactions arise naturally when considering phase reductions (cf.~\cite{Ashwin2016,Leon2019a} and the appendix) and recent work has highlighted the dynamical importance of higher-order interactions~\cite{Battistonetal1,Bick2021}.
In all these cases, going beyond the Kuramoto model leads to new dynamics. Further examples of new dynamics are possible if these features are combined, e.g., modular networks of phase oscillator networks with higher-order interactions allow for heteroclinic structures between different synchrony patterns~\cite{Bick2017c,Bick2019a,Bick2019}.

So far, rigorous insights that relate network structure and dynamics have relied on specific assumptions on the size of the network and the type of interactions. Classical dynamical systems techniques can be applied if the networks have relatively few nodes: The heteroclinic structures between synchrony patterns have been shown to exist in networks of up to nine phase oscillators~\cite{Bick2019a}. Since many real-world networks have many nodes, it is often instructive to consider these networks in the limit of infinitely many nodes. In this limit, the Ott--Antonsen reduction~\cite{Ott2008,Bick2018c} has been instrumental to elucidate the dynamics of coupled oscillator populations. The main drawback is that this reduction requires network interactions that are mediated by a single harmonic, whether it is pairwise or contains higher-order terms. Moreover, the limit itself has only been rigorously justified in a restricted setup.

Here, we give general results on the mean-field limit of multi-population phase oscillator networks with higher-order interactions. We rigorously describe the evolution of the mean-field limit where each population is represented by a probability measure that describes the distribution of oscillators on the circle. Moreover, we identify how phase space is organized for a class of coupled oscillator networks with higher-order interactions and calculate the (local) stability properties of synchrony patterns. Our results contribute to the understanding of coupled oscillator networks in several ways. First, we generalize the results of~\cite{Lancellotti2005} to transport equations involving finitely many oscillator populations with higher-order interactions. Second, our stability analysis complements the work in~\cite{Carrillo2014} and~\cite{Dietert2018} by considering a general setting with multiple oscillator populations and general coupling with nonsinusoidal pairwise and nonpairwise higher-order interactions. Third, our explicit stability results on a measure-valued evolution shows in this setting one cannot expect asymptotic stability of full phase synchrony but just a weaker form of stability. Finally, our results provide a first step towards understanding global dynamical phenomena in the general mean-field limit: Our stability results outline necessary conditions to prove that the heteroclinic structures for small networks exist in the mean-field limit of large networks.

This work is organized as follows. In the remainder of this section we fix some notation that will be used throughout and introduce a general system of equations that describe the network dynamics. In Section~\ref{sec:solution_theory} we establish the existence and uniqueness of the equations in the space of probability measures. This section also covers the relation of the mean-field limit to the continuity equation and networks of finitely many oscillators. 
The main results regarding synchronization are given in Section~\ref{sec:dynamics}. 
We apply our results to three explicit examples of coupled phase oscillator networks in Section~\ref{sec:Examples}. Finally, in Section~\ref{sec:conclusion} we give some concluding remarks and an outlook on future research.

%===========================================================================================================================================
\section*{Notation}%\label{sec:notation}
%===========================================================================================================================================

We first fix some notation that will be used throughout this paper. Let~$\mathcal P(X)$ denote the set of all Borel probability measures on the set~$X$. If $\S=\R/2\pi\Z$ is the unit circle then~$\mathcal P(\S)$ represents the set of all Borel probability measures on the circle. The symbol $\mathcal P_\mathrm{ac}(\S)$ is used to denote the set of absolutely continuous probability measures, i.e., those which have a density, on the unit circle. Whenever we write $\alpha_1-\alpha_2$ for two points $\alpha_1, \alpha_2\in \S$ on the circle, we refer to the value of $\alpha_1-\alpha_2 \in [0,2\pi)$. Further, let us define open intervals on the circle as
\begin{align*}
    (\alpha_1,\alpha_2) := \{ \alpha_1 + t (\alpha_2-\alpha_1) : t\in (0,1)\}.
\end{align*}
Assuming $\alpha_1$ and $\alpha_2$ are represented by values $\alpha_1, \alpha_2\in (-\pi, \pi]$ we use the notation
\begin{align*}
	|\alpha_1-\alpha_2|_\S := \min(\alpha_1-\alpha_2, 2\pi-(\alpha_1-\alpha_2)).
\end{align*}
To compare two measures $\mu, \nu \in \mathcal P(\S)$, we use the Wasserstein-$1$ distance \cite{Villani2003}, which is also referred to as the bounded-Lipschitz distance
\begin{subequations}
\label{eq:WassersteinMetric}
\begin{align}
	\label{eq:WassersteinCoupling}
	W_1(\mu,\nu) &:=  \mathop{\inf_{\gamma\in\mathcal P(\S\times\S)}}_{M_1\gamma=\mu,\ M_2\gamma = \nu} \int_{\S\times\S} |\alpha-\beta|_\S\  \gamma(\d\alpha, \d\beta)\\
	\label{eq:WassersteinLipschitz}
	& = \sup_{f \in \mathcal{D}} \left\lvert\int_\S f(\alpha) \ \d\mu(\alpha) - \int_\S f(\alpha)\ \d\nu(\alpha)\right\lvert,
\end{align}
\end{subequations}
where $M_1\gamma$ and $M_2\gamma$ are the marginals of $\gamma$, i.e., the push-forward measures under the map $(\alpha, \beta)\mapsto \alpha$ and $(\alpha, \beta)\mapsto \beta$ and
\begin{align*}
	\mathcal{D} := \{ f\in C(\S): \lvert f(\alpha)-f(\beta)\rvert\le\snorm{\alpha-\beta} \text{ for all } \alpha,\beta\in\S\}.
\end{align*}
Further, for $n\in\N$ we write $[n] := \{1, \dotsc, n\}$ and for $R\in\N$ we define the multi-index $s = (s_1,\dots,s_R)\in [M]^R$. Then, given $\mu = (\mu_1,\dots,\mu_M)\in \mathcal P(\S)^M$, we define the measure
\begin{align*}
    \mu^{(s)} = (\mu_{(s_1)}, \dots,\mu_{(s_R)})
\end{align*}
and write $\abs{s} = R$, $\bar s = \max_{j\in [M]} \abs{\{i : s_i = j\}}$.

%===========================================================================================================================================
%===========================================================================================================================================
\section{Solution Theory}\label{sec:solution_theory}
%===========================================================================================================================================
%===========================================================================================================================================

In this section we first introduce the system of characteristic equations stated below as \eqref{eq:SystemOfEquations}--\eqref{eq:coupling}. Then, we establish its well-posedness, its continuous dependence on initial conditions, and finally relate the solution of the characteristic system to mean-field limit equations of Vlasov--Fokker--Planck type and finite-dimensional generalized Kuramoto models with multiple populations and higher-order coupling.

%===========================================================================================================================================
\subsection{The System of Characteristic Equations}
%===========================================================================================================================================

In this paper, we consider the dynamics of~$M\in\N$ coupled phase oscillator populations. We now introduce a general set of equations that describes the network evolution, where the state of population $\sigma\in[M]$ is given by a probability measure~$\mu_\sigma$. 

The network interactions are determined by a multi-index $s^\sigma\in[M]^{R_\sigma}$ for each population together with Lipschitz continuous coupling functions $G_\sigma\colon \S^{\abs{s^\sigma}}\times\S\to \R$. Specifically, these coupling functions are supposed to be $L$-Lipschitz when $\S^{\abs{s^\sigma}}\times\S$ is considered with the metric $d(\alpha,\beta) = \sum_{i=1}^{\abs{s^\sigma}+1} \snorm{\alpha_i-\beta_i}$. If $\mu^\mathrm{in} = (\mu^\mathrm{in}_1,\dots,\mu^\mathrm{in}_M)\in \mathcal P(\S)^M$ denotes the initial state of the network,~$\#$ denotes the push-forward operator and $\mu = (\mu_1,\dots,\mu_M)$, then the evolution of $\mu(t) = (\mu_1(t), \dots,\mu_M(t))$ is determined by the characteristic equations
\begin{subequations}
\label{eq:SystemOfEquations}
\begin{align}
	\label{eq:SystemOfEquations_Derivative}
	\partial_t \Phi_\sigma(t,\xi^\mathrm{in}_\sigma, \mu^\mathrm{in}) &= (\mathcal K_\sigma\mu(t))(\Phi_\sigma(t,\xi^\mathrm{in}_\sigma, \mu^\mathrm{in}))\\
	\label{eq:SystemOfEquations_PushForward}
	\mu_\sigma(t) &= \Phi_\sigma(t,\cdot, \mu^\mathrm{in})\#\mu^\mathrm{in}_\sigma\\
	\label{eq:SystemOfEquations_InitialCond}
	\Phi_\sigma(0,\xi^\mathrm{in}_\sigma,\mu^\mathrm{in})&= \xi^\mathrm{in}_\sigma.
\end{align}
\end{subequations}
for $\sigma\in[M]$ and the evolution operator
\begin{align}
\label{eq:coupling}
		(\mathcal K_\sigma\mu)(\phi) = \omega_\sigma + \int_{\S^{\abs{s^\sigma}}} G_\sigma(\alpha,\phi)\ \d\mu^{(s^\sigma)}(\alpha),
\end{align}
where $\omega_\sigma\in\R$ is the instantaneous frequency of all oscillators in population~$\sigma$. 
We remark that the general idea of using a mean-field formulation involving probability measures instead of densities is quite classical~\cite{Golse2013}. Our equations~\eqref{eq:SystemOfEquations}--\eqref{eq:coupling} provide a very general variant of this principle allowing for multi-population higher-order coupled systems. Yet, the formulation also naturally reduces to classical cases, as we illustrate in Section \ref{sec:Examples}.

\begin{definition}\label{def:solution}
    Let $\mu^\mathrm{in} = (\mu^\mathrm{in}_1,\dots,\mu^\mathrm{in}_M)\in \mathcal P(\S)^M$. If functions $t\mapsto \Phi_\sigma(t,\xi^\mathrm{in}_\sigma,\mu^\mathrm{in})$ solve the ODE \eqref{eq:SystemOfEquations_Derivative} together with \eqref{eq:SystemOfEquations_PushForward},\eqref{eq:SystemOfEquations_InitialCond} and \eqref{eq:coupling}, they are referred to as the \emph{mean-field characteristic flow}. In this case, $\mu\in C_{\mathcal P(\S)}^M$, given by \eqref{eq:SystemOfEquations_PushForward}, is a \emph{solution} of the system \eqref{eq:SystemOfEquations}--\eqref{eq:coupling}.
\end{definition}

\begin{remark}
	For a given mean-field characteristic flow $\Phi_\sigma$, the solution $\mu$ of the characteristic system is uniquely given by \eqref{eq:SystemOfEquations_PushForward}. Conversely, for a given solution $\mu\in C_{\mathcal P(\S)}^M$, the mean-field characteristic flow~$\Phi_\sigma$ is unique, as one can see by integrating \eqref{eq:SystemOfEquations_Derivative} and \eqref{eq:SystemOfEquations_InitialCond}.
\end{remark}

%===========================================================================================================================================
\subsection{Existence, Uniqueness and Continuous Dependence on Initial Conditions for the Characteristic System}\label{sec:existence_uniqueness_contdepini}
%===========================================================================================================================================

We start with existence and uniqueness building upon ideas by Neunzert~\cite{Neunzert1978} developed in the context of more classical single-population kinetic models, which lead to similar mean-field limits in comparison to our system \eqref{eq:SystemOfEquations}--\eqref{eq:coupling}.
 Since the proof given in~\cite{Neunzert1978} makes abstract assumptions about the coupling function and only deals with one population, there are quite some differences to the existence and uniqueness proof of our system, which is why we decided to include the full details here. First, we have to define a suitable space for solutions.

\begin{definition}
	Let $T>0$. A function $\mu\colon[0,T]\to \mathcal P(\S)$ is \emph{weakly continuous} if for all $f\in C(\S)$ the map
	\begin{align*}
		t\mapsto \int_\S f(\phi) \ \mu(t,\d \phi)
	\end{align*}
	is continuous. Let $C_{\mathcal P(\S)}$ be the set of all weakly continuous functions $\mu\colon [0,T] \to \mathcal P(\S)$.
	%\textcolor{blue}{[CB:~I would avoid ``called'' and just state the defn. Highlight the term to be defined; check throughout.]}
\end{definition}

\begin{remark}
    As $T>0$ is arbitrary, we do not explicitly include $T$ in the notation $C_{\mathcal P(\S)}$ for functions mapping from $[0,T]$ to $\mathcal P(\S)$. Since the following existence and uniqueness result as well as results regarding continuous dependence on initial conditions are valid for all $T>0$, they can be extended to hold on the half-open interval $[0,\infty)$.
\end{remark}

\begin{lemma}\label{lem:basicmetricspace}
	The space $C_{\mathcal P(\S)}$ together with the metric
	\begin{align*}
		d(\mu,\nu) := \sup_{t\in[0,T]} W_1(\mu(t),\nu(t))
	\end{align*}
	is a complete metric space.
\end{lemma}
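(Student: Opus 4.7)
The plan is to reduce the claim to classical facts about spaces of continuous functions into a complete metric space. First I would observe that $(\mathcal P(\S), W_1)$ is itself a complete metric space. Since $\S$ has diameter~$\pi$ under $\snorm{\cdot}$, we have $W_1(\mu,\nu)\le \pi$ for any $\mu,\nu\in\mathcal P(\S)$, so~$d$ will automatically be finite. Completeness of $(\mathcal P(X),W_1)$ over a compact metric space~$X$ is standard (cf.~\cite{Villani2003}), and since~$\S$ is compact Polish, this applies here.

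Next I would verify that the ``weakly continuous'' condition in the definition of $C_{\mathcal P(\S)}$ coincides with continuity with respect to~$W_1$. Once this equivalence is established, $C_{\mathcal P(\S)}$ is precisely $C([0,T],(\mathcal P(\S),W_1))$ and the rest is routine. The ($W_1 \Rightarrow$ weak) direction is immediate from \eqref{eq:WassersteinLipschitz}: any $f\in C(\S)$ is uniformly continuous on the compact set $\S$, hence Lipschitz after a standard regularization/rescaling argument, and thus controlled by $W_1$. The reverse direction uses that the family $\mathcal D$ of $1$-Lipschitz functions (taken modulo additive constants, which do not affect integrals against probability measures) is uniformly bounded and equicontinuous on $\S$; by Arzel\`a--Ascoli it suffices to test against a countable dense subfamily, and weak convergence then upgrades to uniform control of the sup in \eqref{eq:WassersteinLipschitz}.

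With these facts in hand, the metric axioms for $d$ follow directly from those of $W_1$: non-negativity, symmetry, and the triangle inequality are inherited term-by-term, and $d(\mu,\nu)=0$ forces $W_1(\mu(t),\nu(t))=0$ for every $t$, hence $\mu(t)=\nu(t)$ for every $t$. For completeness, let $(\mu^n)_{n\in\N}$ be a Cauchy sequence in $(C_{\mathcal P(\S)},d)$. For each fixed $t\in[0,T]$, $(\mu^n(t))_n$ is Cauchy in $(\mathcal P(\S),W_1)$ and therefore converges to some $\mu(t)\in\mathcal P(\S)$. The Cauchy property in~$d$ transfers, by letting $m\to\infty$ inside the supremum, to uniform convergence $\sup_{t\in[0,T]} W_1(\mu^n(t),\mu(t))\to 0$. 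A uniform limit of continuous maps into a metric space is continuous, so $t\mapsto\mu(t)$ is $W_1$-continuous and hence weakly continuous, i.e.\ $\mu\in C_{\mathcal P(\S)}$ and $\mu^n\to\mu$ in $d$.

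The main obstacle is the equivalence of weak continuity and $W_1$-continuity on $\mathcal P(\S)$; everything else is bookkeeping. This equivalence is standard but does require a careful reduction to the compactness of $\mathcal D$ modulo constants, or equivalently an appeal to the Kantorovich--Rubinstein theorem together with the fact that $W_1$ metrizes the weak topology on compact metric spaces. After that point the proof is a direct application of the general principle that $C([0,T],Y)$ with the sup metric is complete whenever $Y$ is complete.
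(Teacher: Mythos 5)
Your argument is correct, but it is organized differently from the paper's proof, so a comparison is worthwhile. You reduce the lemma to two standard facts: (i) $(\mathcal P(\S),W_1)$ is complete, and (ii) $C([0,T],Y)$ with the sup metric is complete whenever $Y$ is. To make (ii) applicable you must identify $C_{\mathcal P(\S)}$ with $C([0,T],(\mathcal P(\S),W_1))$, i.e.\ prove \emph{both} directions of the equivalence between weak continuity and $W_1$-continuity; on the compact circle this is the standard fact that $W_1$ metrizes weak convergence, and your Arzel\`a--Ascoli argument (working in $\mathcal D$ modulo additive constants) is the usual way to get the direction ``weak $\Rightarrow W_1$''. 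The paper sidesteps this equivalence entirely: it establishes completeness of $(\mathcal P(\S),W_1)$ by citing Kellerer, constructs the pointwise limit $\mu_\infty(t)$ of a Cauchy sequence, notes that for $1$-Lipschitz $f$ the integrals $\int_\S f\,\d\mu_n(t)$ converge uniformly in $t$ to $\int_\S f\,\d\mu_\infty(t)$, and then lifts continuity from Lipschitz test functions to all of $C(\S)$ via Portmanteau --- so only the one direction ``a uniform $W_1$-limit of weakly continuous curves is weakly continuous'' is ever needed. Your route is more modular and makes the abstract structure ($C([0,T],Y)$ completeness) explicit; the paper's is slightly more economical in what it has to verify. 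One phrasing you should repair: a continuous function on $\S$ is \emph{not} ``Lipschitz after a standard regularization/rescaling argument''; what your argument actually requires is that Lipschitz functions are uniformly dense in $C(\S)$ (which mollification does provide), so that $W_1$-control of Lipschitz integrals suffices for weak convergence. As written the sentence is false, but the intended density argument is correct and fills the step.
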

\begin{proof}%\textcolor{orange}{[TB: check literature again]}
	Let $(\gamma_n)_{n\in\N}$ be a Cauchy sequence in $\mathcal P(\S)$. Then, we obtain from~\cite[Satz 3]{Kellerer1972} the convergence of $(\gamma_n)_{n\in\N}$ to an arbitrary positive measure. Testing with the constant $1$-function, i.e., choosing $f\equiv 1$ in the representation \eqref{eq:WassersteinLipschitz}, yields that the Cauchy sequence is even converging to a probability measure. This shows the completeness of $(\mathcal P(\S), W_1)$.\\
	Now, let $(\mu_n)_{n\in\N}$ be a Cauchy sequence in $(C_{\mathcal P(\S)}, d)$. The completeness of $(\mathcal P(\S), W_1)$ causes the existence of measure-valued functions $\mu_\infty\colon [0,T]\to \mathcal P(\S)$. To show weak continuity of $\mu_\infty$, let $f\in C(\S)$ first be $1$-Lipschitz continuous and calculate
	\begin{align*}
	    \left| \int_\S f(\phi)\ \mu_n(t,\d\phi) - \int f(\phi)\ \mu_\infty(t,\d\phi)\right| \le W_1(\mu_n(t), \mu_\infty(t)) \to 0
	\end{align*}
	uniformly in $t$ as $n\to \infty$. Continuity of $\int_\S f(\phi) \ \mu_n(t,\d\phi)$ in $t$ for each $n\in \N$ therefore implies continuity of $\int f(\phi)\ \mu_\infty(t,\d\phi)$ in $t$. Finally, Portemanteau's Theorem (see eg. \cite[Theorem 13.17]{Klenke2008}) allows us to lift continuity of $\int f(\phi)\ \mu_\infty(t,\d\phi)$ from all $1$-Lipschitz continuous functions $f$ to all $f\in C(\S)$.
\end{proof}

\begin{lemma}
For every $\alpha \in \R$, the space $C_{\mathcal P(\S)}^M$ together with the metric
\begin{align}\label{eq:metric}
	d_\alpha(\mu, \nu) := \sup_{t\in [0,T]} e^{-\alpha t} \sum_{\sigma = 1}^M W_1(\mu_\sigma(t), \nu_\sigma(t))
\end{align}
is a complete metric space.
\end{lemma}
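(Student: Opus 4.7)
The plan is to reduce completeness of $(C_{\mathcal P(\S)}^M, d_\alpha)$ to the completeness of the single-population space established in Lemma \ref{lem:basicmetricspace}. The key observation is that on the bounded interval $[0,T]$ the exponential weight $e^{-\alpha t}$ is bounded above and below by positive constants, so $d_\alpha$ is Lipschitz-equivalent to the unweighted product metric
\begin{align*}
    d_0(\mu,\nu) := \sup_{t\in [0,T]} \sum_{\sigma=1}^M W_1(\mu_\sigma(t), \nu_\sigma(t)).
\end{align*}
Concretely, setting $S(t) := \sum_{\sigma=1}^M W_1(\mu_\sigma(t), \nu_\sigma(t))$, one has $e^{-|\alpha|T} S(t) \le e^{-\alpha t} S(t) \le e^{|\alpha|T} S(t)$ for every $t \in [0,T]$, and taking the supremum gives
\begin{align*}
    e^{-|\alpha|T}\, d_0(\mu,\nu) \;\le\; d_\alpha(\mu,\nu) \;\le\; e^{|\alpha|T}\, d_0(\mu,\nu).
\end{align*}
In particular the two metrics generate the same Cauchy sequences, so it suffices to prove completeness for $d_0$.

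First I would verify that $d_\alpha$ is a metric: symmetry and positive definiteness are inherited from $W_1$, and the triangle inequality follows by applying the triangle inequality for $W_1$ componentwise, summing over $\sigma$, multiplying by $e^{-\alpha t}$, and taking the supremum.

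Next, let $(\mu^n)_{n\in\N}$ be a $d_\alpha$-Cauchy sequence in $C_{\mathcal P(\S)}^M$, hence also $d_0$-Cauchy by the equivalence above. For each fixed $\sigma \in [M]$ the bound
\begin{align*}
    \sup_{t \in [0,T]} W_1(\mu_\sigma^n(t), \mu_\sigma^m(t)) \;\le\; d_0(\mu^n, \mu^m)
\end{align*}
shows that the component sequence $(\mu_\sigma^n)_n$ is Cauchy in the single-population space $(C_{\mathcal P(\S)}, d)$ from Lemma \ref{lem:basicmetricspace}. By that lemma there exists a limit $\mu_\sigma^\infty \in C_{\mathcal P(\S)}$ with $\sup_{t} W_1(\mu_\sigma^n(t), \mu_\sigma^\infty(t)) \to 0$ as $n \to \infty$. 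Setting $\mu^\infty = (\mu_1^\infty, \dots, \mu_M^\infty)$ and summing over the $M$ components yields $d_0(\mu^n, \mu^\infty) \to 0$, and by the equivalence of $d_\alpha$ and $d_0$ also $d_\alpha(\mu^n, \mu^\infty) \to 0$.

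I do not expect a serious obstacle: the main content is just noting that the exponential weight is a harmless rescaling on $[0,T]$, after which completeness is inherited from the single-population result. The exponential weighting is introduced because later Picard-type arguments will benefit from a large $\alpha$ to make the coupling operator contractive; here it only affects the metric up to equivalence.
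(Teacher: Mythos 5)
Your proposal is correct and follows essentially the same route as the paper, which simply notes that the statement follows from Lemma~\ref{lem:basicmetricspace}: the intended argument is precisely the componentwise reduction you give, with the observation that the weight $e^{-\alpha t}$ is bounded above and below on $[0,T]$ and hence yields an equivalent metric. You have merely written out the routine details that the paper leaves implicit.
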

\begin{proof}
	Follows from Lemma \ref{lem:basicmetricspace}.
\end{proof}

For a given $\mu\in C_{\mathcal P(\S)}^M$, let~$T_{t,s}^\sigma[\mu]$ denote the flow induced by the velocity field $(\mathcal K_\sigma \mu(t))(\phi)$, i.e., 
\begin{align}\label{eq:Tflow}
	\frac\d{\d t} T_{t,s}^\sigma[\mu]\phi = (\mathcal K_\sigma\mu(t))(T_{t,s}^\sigma[\mu]\phi),\quad T_{s,s}^\sigma[\mu]\phi = \phi.
\end{align}
We proceed to verify several assumptions and constructions in the arguments in~\cite{Neunzert1978} within our new setting. 

\begin{lemma}[cf.~\cite{Neunzert1978}, Assumption on page 236]\label{lem:LipschitzVelocity} For all $\sigma \in [M]$, $\phi,\psi\in\S$ and $\mu\in \mathcal P(\S)^M$ we have
	\begin{align*}
		|(\mathcal K_\sigma \mu)(\phi)-(\mathcal K_\sigma \mu)(\psi)| \le L \snorm{\phi-\psi}.
	\end{align*}
\end{lemma}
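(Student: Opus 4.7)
The plan is essentially a direct unwinding of the definitions. Writing out \eqref{eq:coupling} for both arguments and subtracting, the constant $\omega_\sigma$ cancels, so
\begin{align*}
(\mathcal K_\sigma\mu)(\phi) - (\mathcal K_\sigma\mu)(\psi) = \int_{\S^{\abs{s^\sigma}}} \bigl(G_\sigma(\alpha,\phi) - G_\sigma(\alpha,\psi)\bigr) \, \d\mu^{(s^\sigma)}(\alpha).
\end{align*}
I would then move the absolute value inside the integral using the standard triangle inequality for Bochner/Lebesgue integrals. The key ingredient is the Lipschitz hypothesis on $G_\sigma$ with respect to the product metric $d(\alpha,\beta) = \sum_{i=1}^{\abs{s^\sigma}+1}\snorm{\alpha_i - \beta_i}$: evaluating this at the two points $(\alpha,\phi)$ and $(\alpha,\psi)$, all but the last summand vanish, yielding the pointwise bound $\abs{G_\sigma(\alpha,\phi) - G_\sigma(\alpha,\psi)} \le L\snorm{\phi-\psi}$ uniformly in $\alpha$.

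Finally, since $\mu^{(s^\sigma)}$ is a probability measure on $\S^{\abs{s^\sigma}}$, integrating this pointwise bound yields the desired estimate $L\snorm{\phi-\psi}$ without any extra factor depending on $\abs{s^\sigma}$. There is no genuine obstacle here; the only point worth emphasizing is that the Lipschitz constant of $G_\sigma$ transfers cleanly to $\mathcal K_\sigma\mu$ because the integration is carried out only over the first $\abs{s^\sigma}$ coordinates while $\phi$ appears solely in the final coordinate, so the product structure of the metric $d$ makes the first $\abs{s^\sigma}$ terms irrelevant for this particular comparison.
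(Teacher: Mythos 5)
Your argument is correct and coincides with the paper's own proof: cancel $\omega_\sigma$, bound the integrand by $L\snorm{\phi-\psi}$ using the Lipschitz assumption on $G_\sigma$ (only the last coordinate of the metric contributes), and integrate against the probability measure $\mu^{(s^\sigma)}$. Nothing further is needed.
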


\begin{proof}
    Using the assumption about Lipschitz continuity of $G_\sigma$, we can estimate
	\begin{align*}
		|(\mathcal K_\sigma\mu)(\phi)-(\mathcal K_\sigma\mu)(\psi)| &= \left| \int_{\S^{\abs{s^\sigma}}} G_\sigma(\alpha, \phi) - G_\sigma(\alpha,\psi) \ \d\mu^{(s^\sigma)}(\alpha)\right| \\
		&\le \int_{\S^{\abs{s^\sigma}}} L \snorm{\phi-\psi} \ \d\mu^{(s^\sigma)}(\alpha)\\
		&=L\snorm{\phi-\psi}.
	\end{align*}
	This completes the proof.
\end{proof}

\begin{corollary}\label{cor:LipschitzFlow}
	Given $\mu\in C_{\mathcal P(\S)}^M$, the flow $T^\sigma_{t,0}[\mu]$ is Lipschitz continuous for all $\sigma \in [M]$. In particular,
	\begin{align*}
		\snorm{T_{t,0}^\sigma[\mu]\phi - T^\sigma_{t,0}[\mu]\psi} \le e^{Lt}\snorm{\phi-\psi}.
	\end{align*}
\end{corollary}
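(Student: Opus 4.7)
The statement is a standard Grönwall-type estimate for a Lipschitz ODE, but some care is required because the flow lives on the circle~$\S$ rather than on $\R$. The plan is therefore to lift the characteristic ODE \eqref{eq:Tflow} to the universal cover~$\R$, estimate in the Euclidean metric, and then project back.

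First, I would observe that the velocity field $(\mathcal{K}_\sigma\mu(t))(\cdot)\colon \S\to\R$ pulls back to a $2\pi$-periodic, continuous-in-$\phi$ and measurable-in-$t$ function $\widetilde{V}_\sigma(t,\cdot)\colon\R\to\R$. By Lemma~\ref{lem:LipschitzVelocity}, this lift inherits the $L$-Lipschitz bound in the $\phi$-variable when $\R$ carries the standard metric, provided the arguments are sufficiently close (which is the only regime we need, since we track two trajectories that start close). Given $\phi,\psi\in\S$, pick lifts $\widetilde\phi,\widetilde\psi\in\R$ such that $|\widetilde\phi-\widetilde\psi| = \snorm{\phi-\psi}$, and let $\widetilde u(t)$ and $\widetilde v(t)$ denote the solutions of $\dot x = \widetilde V_\sigma(t,x)$ on $\R$ starting from $\widetilde\phi$ and $\widetilde\psi$ respectively. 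By uniqueness these project to $T_{t,0}^\sigma[\mu]\phi$ and $T_{t,0}^\sigma[\mu]\psi$ in $\S$.

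Next, setting $r(t) := \widetilde u(t)-\widetilde v(t)$, I would compute
\begin{align*}
    \tfrac{\d}{\d t}|r(t)| \le |\dot r(t)| = |\widetilde V_\sigma(t,\widetilde u(t))-\widetilde V_\sigma(t,\widetilde v(t))| \le L\, |r(t)|
\end{align*}
as long as $|r(t)|\le \pi$, so that the Lipschitz constant transfers from Lemma~\ref{lem:LipschitzVelocity} to the Euclidean distance on the lift. Grönwall's lemma then yields $|r(t)|\le e^{Lt}|r(0)| = e^{Lt}\snorm{\phi-\psi}$, and projecting back gives $\snorm{T_{t,0}^\sigma[\mu]\phi-T_{t,0}^\sigma[\mu]\psi}\le |r(t)| \le e^{Lt}\snorm{\phi-\psi}$, since the circle metric is dominated by the Euclidean distance between any lifts.

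The one subtlety, and the only real obstacle, is to make sure $|r(t)|$ does not grow past~$\pi$ before the estimate can be closed on an arbitrary time interval $[0,T]$. I would handle this by a bootstrap: on any maximal subinterval $[0,t^*)$ on which $|r(t)|\le \pi$, the Grönwall bound above holds; if $t^*<T$, I can subdivide $[0,T]$ into finitely many steps of length $\le \frac{\log 2}{L}$ and iterate the estimate on each step (on each short step both endpoints stay close to the respective flow point, so the Lipschitz identification of the circle and Euclidean metrics is valid). Concatenating the step estimates reproduces the exponential bound $e^{Lt}\snorm{\phi-\psi}$ for all $t\in[0,T]$, proving the corollary.
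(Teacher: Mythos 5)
Your proposal is, at its core, exactly the paper's argument: the paper proves this corollary in one line as a direct consequence of Lemma~\ref{lem:LipschitzVelocity} and Gronwall's Lemma, and your lift to the universal cover is just a careful way of making that one line precise, so the approach is the same.

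One remark on your ``one subtlety'': the constraint $|r(t)|\le\pi$ is not actually needed, and the bootstrap you sketch does not really enforce it either (steps of length $\tfrac{\log 2}{L}$ only prevent the bound from exceeding $\pi$ if the distance at the start of each step is below $\pi/2$, which the iteration does not guarantee). Fortunately you can simply drop this part. For any lifts $\widetilde u,\widetilde v\in\R$ one has $\snorm{\pi(\widetilde u)-\pi(\widetilde v)}\le|\widetilde u-\widetilde v|$, since the circle distance is the minimum of $|\widetilde u-\widetilde v+2\pi k|$ over $k\in\Z$; hence the lifted field satisfies
\begin{align*}
  \bigl|\widetilde V_\sigma(t,\widetilde u)-\widetilde V_\sigma(t,\widetilde v)\bigr|
  \le L\,\snorm{\pi(\widetilde u)-\pi(\widetilde v)}
  \le L\,|\widetilde u-\widetilde v|
\end{align*}
globally on $\R$, with no closeness assumption. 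Gronwall then gives $|r(t)|\le e^{Lt}\snorm{\phi-\psi}$ on all of $[0,T]$ in one step, and projecting back yields the claim. (Alternatively, note that once $e^{Lt}\snorm{\phi-\psi}\ge\pi$ the asserted inequality is trivial because circle distances never exceed $\pi$.) With that simplification your proof is correct and matches the paper's.
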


\begin{proof}
	This is a direct consequence of Lemma \ref{lem:LipschitzVelocity} and Gronwall's Lemma.
\end{proof}

The following Lemma provides an important tool when dealing with multiple populations and higher-order coupling:
\begin{lemma}\label{lem:HighDimWasserstein}
	Let $n\in\N, \mu_1,\dots,\mu_n,\nu_1,\dots,\nu_n\in \mathcal P(\S)$, $\mu := \mu_1\otimes\cdots\otimes\mu_n\in \mathcal P(\S^n)$, $\nu := \nu_1\otimes\cdots\otimes\nu_n\in \mathcal P(\S^n)$ and $g\colon \S^n\to \R$ be an $L$-Lipschitz continuous function with respect to the metric $d(\alpha,\beta) = \sum_{k=1}^{n}\snorm{\alpha_k-\beta_k}$. Then,
	\begin{align*}
		\left|\int_{\S^n} g(\alpha)\ \d\mu(\alpha) - \int_{\S^n} g(\beta) \ \d\nu(\beta)\right|\le L \sum_{i=1}^{n}W_1(\mu_i,\nu_i).
	\end{align*}
\end{lemma}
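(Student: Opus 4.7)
The plan is to telescope, replacing one factor $\mu_i$ by $\nu_i$ at a time, and then reduce each one-coordinate step to the one-dimensional bounded-Lipschitz characterization~\eqref{eq:WassersteinLipschitz}.

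Concretely, I would introduce the hybrid product measures
\begin{align*}
    \rho_k := \nu_1\otimes\cdots\otimes\nu_k\otimes\mu_{k+1}\otimes\cdots\otimes\mu_n,\qquad k=0,1,\dots,n,
\end{align*}
so that $\rho_0 = \mu$ and $\rho_n = \nu$, and write the difference as a telescoping sum
\begin{align*}
    \int_{\S^n} g\,\d\mu - \int_{\S^n} g\,\d\nu = \sum_{k=1}^n\Big(\int_{\S^n} g\,\d\rho_{k-1} - \int_{\S^n} g\,\d\rho_k\Big).
\end{align*}
By the triangle inequality it then suffices to bound each summand by $L\, W_1(\mu_k,\nu_k)$.

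For the $k$-th summand I would integrate out every coordinate except the $k$-th via Fubini. Set
\begin{align*}
    h_k(\phi) := \int_{\S^{n-1}} g(\beta_1,\dots,\beta_{k-1},\phi,\beta_{k+1},\dots,\beta_n)\,\d\nu_1(\beta_1)\cdots\d\nu_{k-1}(\beta_{k-1})\,\d\mu_{k+1}(\beta_{k+1})\cdots\d\mu_n(\beta_n),
\end{align*}
so that $\int g\,\d\rho_{k-1} = \int_{\S} h_k\,\d\mu_k$ and $\int g\,\d\rho_k = \int_{\S} h_k\,\d\nu_k$. Pulling the Lipschitz estimate on $g$ inside the integral yields $|h_k(\phi)-h_k(\psi)|\le L\,\snorm{\phi-\psi}$, since the metric $d(\alpha,\beta)=\sum_j\snorm{\alpha_j-\beta_j}$ collapses to a single term when only the $k$-th coordinates differ. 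Hence $L^{-1}h_k\in\mathcal D$, and the dual representation~\eqref{eq:WassersteinLipschitz} gives
\begin{align*}
    \Big|\int_{\S} h_k\,\d\mu_k - \int_{\S} h_k\,\d\nu_k\Big|\le L\, W_1(\mu_k,\nu_k).
\end{align*}
Summing over $k$ yields the claim.

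The only mildly technical point is the Fubini step together with verifying that $h_k$ is a well-defined continuous (in fact $L$-Lipschitz) function on $\S$, which is immediate from dominated convergence once one observes that $g$ is bounded on the compact product $\S^n$ (being Lipschitz). No substantial obstacle is expected; the construction is essentially the standard coupling/telescoping argument adapted to product measures on $\S^n$.
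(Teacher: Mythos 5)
Your proof is correct, and it takes a somewhat different route from the paper even though both arguments share the same telescoping skeleton over the hybrid products $\nu_1\otimes\cdots\otimes\nu_k\otimes\mu_{k+1}\otimes\cdots\otimes\mu_n$. The paper telescopes at the level of the Wasserstein distance on $\mathcal P(\S^n)$: it first proves, by an explicit coupling construction (an optimal coupling in the swapped coordinate tensored with diagonal couplings in the remaining coordinates), that $W_1$ between two product measures differing in a single factor is bounded by the $W_1$ distance of those factors, and then applies the $n$-dimensional Kantorovich--Rubinstein duality to $g$ together with the triangle inequality for $W_1$ on $\S^n$. You instead telescope the integrals directly and handle each one-factor swap by Fubini, integrating out all coordinates but the $k$-th to obtain an $L$-Lipschitz function $h_k$ on $\S$, and then invoke only the one-dimensional dual representation~\eqref{eq:WassersteinLipschitz}. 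Your argument is more elementary in that it never needs the Wasserstein distance or its duality on the product space $\S^n$, nor any coupling construction; the paper's route, on the other hand, yields as a by-product the reusable transport-theoretic fact $W_1(\mu_1\otimes\cdots\otimes\mu_n,\nu_1\otimes\cdots\otimes\nu_n)\le\sum_i W_1(\mu_i,\nu_i)$ with respect to the sum metric. Two cosmetic points: the normalization $L^{-1}h_k\in\mathcal D$ implicitly assumes $L>0$ (the case $L=0$ being trivial), and the continuity of $h_k$ follows already from its Lipschitz bound, so the dominated-convergence remark is not even needed.
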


\begin{proof}
	In this proof we work with the Wasserstein-$1$ distance in $\mathcal P(\S^n)$ and its dual representation \cite{Villani2003}. For two measures $\mu,\nu\in \mathcal P(\S^n)$ they are given by
	\begin{align}
		\nonumber
		W_1(\mu,\nu) &= \mathop{\inf_{\pi\in \mathcal P(\S^n\times \S^n)}}_{M_{(1,\dots,n)}\pi = \mu, M_{(n+1,\dots,2n)}\pi = \nu} \int_{\S^n\times \S^n} d(\alpha,\beta)\ \d\pi(\alpha,\beta)\\
		\label{eq:HighDimWassersteinDual}
		&=\mathop{\sup_{f\in C(\S^n)}}_{|f(\alpha)-f(\beta)|<d(\alpha,\beta)} \left| \int_{\S^n} f(\alpha)\ \d\mu(\alpha) - \int_{\S^n} f(\beta)\ \d\nu(\beta)\right|,
	\end{align}
	where $M_{(1,\dots,n)}\pi$ is the push-forward measure of $\pi$ under the map $(\alpha_1,\dots,\alpha_n,\beta_1,\dots,\beta_n)\mapsto (\alpha_1,\dots,\alpha_n)$ and $M_{(n+1,\dots,2n)}\pi$ is the push-forward measure of $\pi$ under the map $(\alpha_1,\dots,\alpha_n,\beta_1,\dots,\beta_n)\mapsto (\beta_1,\dots,\beta_n)$.
	Let us denote
	\begin{align*}
			\mathcal D_1 & := \{ \pi \in \mathcal P(\S^n\times\S^n): M_{(1,\dots,n)}\pi = \mu, M_{(n+1,\dots,2n)}\pi = \nu_1\otimes \mu_2\otimes\dots\otimes\mu_n \},  \\
			\mathcal D_2 & := \{ \gamma\in \mathcal P(\S\times\S): M_1\gamma=\mu_1,M_2\gamma = \nu_1\},\\
			\mathcal D_3 & := \{ \pi \in \mathcal P(\S^n\times\S^n):\exists \gamma\in \mathcal D_2:\\
			&\qquad   \d\pi(\alpha_1,\dots,\alpha_n,\beta_1,\dots,\beta_n) = \d\gamma(\alpha_1,\beta_1)\d\delta_{\{\alpha_2=\beta_2\}}(\alpha_2)\d\mu_2(\beta_2)\cdots\d\delta_{\{\alpha_n=\beta_n\}}(\alpha_n)\d\mu_n(\beta_n) \}.
	\end{align*}
	Note that $\mathcal D_3\subset \mathcal D_1$, which is why
	\begin{align*}
		&W_1(\mu_1\otimes\dots\otimes\mu_n, \nu_1\otimes\mu_2\otimes\dots\otimes\mu_n)\\
		&\qquad\qquad= \inf_{\pi\in \mathcal D_1} \int_{\S^n\times\S^n} d(\alpha,\beta)\ \d\pi(\alpha,\beta)\\
		&\qquad\qquad\le \inf_{\pi\in \mathcal D_3} \int_{\S^n\times\S^n} d(\alpha,\beta)\ \d\pi(\alpha,\beta)\\
		&\qquad\qquad= \inf_{\gamma\in \mathcal D_2} \int_{\S^n\times\S^n} d(\alpha,\beta) \ \d\gamma(\alpha_1,\beta_1)\d\delta_{\{\alpha_2=\beta_2\}}(\alpha_2)\d\mu_2(\beta_2)\cdots\d\delta_{\{\alpha_n=\beta_n\}}(\alpha_n)\d\mu_n(\beta_n)\\
		&\qquad\qquad= \inf_{\gamma\in \mathcal D_2} \int_{\S\times\S} \snorm{\alpha_1-\beta_1}\ \d\gamma(\alpha_1,\beta_1)\\
		&\qquad\qquad= W_1(\mu_1,\nu_1).
	\end{align*}
	Using \eqref{eq:HighDimWassersteinDual} and the above calculation we can finally calculate
	\begin{align*}
		\left|\int_{\S^n} g(\alpha)\ \d\mu(\alpha) - \int_{\S^n} g(\beta) \ \d\nu(\beta)\right| &\le L W_1(\mu,\nu)\\
		&= L W_1(\mu_1\otimes\cdots\otimes\mu_n, \nu_1\otimes\cdots\otimes\nu_n)\\
		&\le L W_1(\mu_1\otimes\cdots\otimes\mu_n, \nu_1\otimes\mu_2\otimes\cdots\otimes\mu_n)\\
		&\qquad + L W_1(\nu_1\otimes\mu_2\otimes\cdots\otimes\mu_n, \nu_1\otimes\nu_2\otimes\mu_3\otimes\cdots\otimes\mu_n)\\
		&\qquad + \dots\\
		&\qquad + L W_1(\nu_1\otimes\cdots\otimes \nu_{n-1}\otimes \mu_n, \nu_1\otimes \cdots\otimes\nu_n)\\
		&\le L \sum_{i=1}^{n}W_1(\mu_i,\nu_i).
	\end{align*}
\end{proof}

\begin{lemma}[cf. \cite{Neunzert1978}, Assumption on page 237]\label{lem:KMeasure} For all $\sigma \in [M]$, $\phi\in\S$ and $\mu,\nu\in \mathcal P(\S)^M$ we have
	\begin{align*}
		|(\mathcal K_\sigma \mu)(\phi)-(\mathcal K_\sigma\nu)(\phi)| \le L\bar{s^\sigma} \sum_{i = 1}^M W_1(\mu_i,\nu_i).
	\end{align*}
\end{lemma}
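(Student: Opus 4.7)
The plan is to apply Lemma \ref{lem:HighDimWasserstein} directly to the integral representation of $\mathcal K_\sigma$, and then account for repetitions in the multi-index $s^\sigma$ to replace a sum indexed by positions in $s^\sigma$ by a sum indexed by populations, picking up the factor $\bar{s^\sigma}$.

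First, I would use \eqref{eq:coupling} to cancel the $\omega_\sigma$ terms, so that
\begin{align*}
    |(\mathcal K_\sigma\mu)(\phi) - (\mathcal K_\sigma\nu)(\phi)| = \left|\int_{\S^{\abs{s^\sigma}}} G_\sigma(\alpha,\phi)\ \d\mu^{(s^\sigma)}(\alpha) - \int_{\S^{\abs{s^\sigma}}} G_\sigma(\alpha,\phi)\ \d\nu^{(s^\sigma)}(\alpha)\right|.
\end{align*}
For the fixed second argument $\phi$, the map $\alpha \mapsto G_\sigma(\alpha,\phi)$ is $L$-Lipschitz on $\S^{\abs{s^\sigma}}$ with respect to $d(\alpha,\beta) = \sum_{k=1}^{\abs{s^\sigma}}\snorm{\alpha_k-\beta_k}$, by the joint Lipschitz assumption on $G_\sigma$. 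Moreover, by definition $\mu^{(s^\sigma)} = \mu_{s^\sigma_1}\otimes\cdots\otimes\mu_{s^\sigma_R}$ (with $R = \abs{s^\sigma}$) is a product measure, and likewise for $\nu^{(s^\sigma)}$. Hence Lemma \ref{lem:HighDimWasserstein} applies and yields
\begin{align*}
    |(\mathcal K_\sigma\mu)(\phi) - (\mathcal K_\sigma\nu)(\phi)| \le L \sum_{k=1}^{R} W_1(\mu_{s^\sigma_k}, \nu_{s^\sigma_k}).
\end{align*}

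The remaining step is a bookkeeping argument: we regroup the sum over positions $k \in [R]$ according to which population $i \in [M]$ the index $s^\sigma_k$ points to. Setting $n_i := \abs{\{ k : s^\sigma_k = i\}}$, we obtain
\begin{align*}
    \sum_{k=1}^{R} W_1(\mu_{s^\sigma_k}, \nu_{s^\sigma_k}) = \sum_{i=1}^{M} n_i\, W_1(\mu_i,\nu_i) \le \bar{s^\sigma} \sum_{i=1}^{M} W_1(\mu_i,\nu_i),
\end{align*}
since $n_i \le \bar{s^\sigma}$ by definition of $\bar{s^\sigma}$. Combining the two displays gives the claim.

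There is no real obstacle here; the lemma is essentially a corollary of Lemma \ref{lem:HighDimWasserstein}. The only subtlety worth being careful about is that the multi-index $s^\sigma$ may contain repeated entries, so one cannot simply identify the sum $\sum_{k=1}^{R} W_1(\mu_{s^\sigma_k},\nu_{s^\sigma_k})$ with $\sum_{i=1}^{M} W_1(\mu_i,\nu_i)$; this is precisely where the factor $\bar{s^\sigma}$ enters, and it is sharp in the worst case where a single population occupies all slots of $s^\sigma$.
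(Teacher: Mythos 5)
Your proof is correct and follows essentially the same route as the paper: cancel $\omega_\sigma$, apply Lemma~\ref{lem:HighDimWasserstein} to the product measures $\mu^{(s^\sigma)}$ and $\nu^{(s^\sigma)}$ with the Lipschitz bound on $\alpha\mapsto G_\sigma(\alpha,\phi)$, and then bound the sum over positions of $s^\sigma$ by $\bar{s^\sigma}$ times the sum over populations. The paper compresses the last regrouping step into a single inequality, while you spell out the multiplicity counting explicitly; the content is identical.
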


\begin{proof}
	Using Lemma \ref{lem:HighDimWasserstein}, we can estimate
	\begin{align*}
		|(\mathcal K_\sigma \mu)(\phi)-(\mathcal K_\sigma\nu)(\phi)| &\le  \left| \int_{\S^{\abs{s^\sigma}}} G_\sigma(\alpha,\phi) \ \d\mu^{(s^\sigma)}(\alpha) -  \int_{\S^{\abs{s^\sigma}}} G_\sigma(\beta,\phi)\ \d\nu^{(s^\sigma)}(\beta) \right|\\
		&\le L \sum_{i=1}^{\abs{s^\sigma}} W_1(\mu_{(s^\sigma_i)},\nu_{(s^\sigma_i)}) \le L\bar{s^\sigma}\sum_{i=1}^M W_1(\mu_i,\nu_i),
	\end{align*}
	which completes the proof.
\end{proof}

\begin{lemma}\label{lem:estimate} For all $\mu,\nu\in C_{\mathcal P(\S)}^M$ and
	\begin{align}\label{eq:w_def}
		w(t):= \sum_{i=1}^M W_1(\mu_i(t), \nu_i(t))
	\end{align}
we have
	\begin{align*}
		&|T_{t,0}^\sigma[\mu]\phi-T^\sigma_{t,0}[\nu]\phi|_\S \le L\bar{s^\sigma} e^{Lt} \int_0^t w(\tau) e^{-\tau L}\ \d\tau.
	\end{align*}
\end{lemma}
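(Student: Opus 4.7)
The plan is to derive an integral (Grönwall-type) inequality for $\Delta(t):=\snorm{T_{t,0}^\sigma[\mu]\phi-T_{t,0}^\sigma[\nu]\phi}$ and then extract the sharp form with the $e^{-L\tau}$ factor via an integrating-factor trick. Since both flows start at $\phi$, I would lift to $\R$ and write, using the defining ODE \eqref{eq:Tflow},
\begin{align*}
T_{t,0}^\sigma[\mu]\phi - T_{t,0}^\sigma[\nu]\phi
= \int_0^t \bigl[(\mathcal K_\sigma \mu(s))(T_{s,0}^\sigma[\mu]\phi) - (\mathcal K_\sigma \nu(s))(T_{s,0}^\sigma[\nu]\phi)\bigr]\, \d s.
\end{align*}
The key manipulation is to insert $\pm (\mathcal K_\sigma \mu(s))(T_{s,0}^\sigma[\nu]\phi)$ in the integrand. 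The first difference is controlled by Lemma~\ref{lem:LipschitzVelocity} by $L\,\Delta(s)$, while the second is controlled by Lemma~\ref{lem:KMeasure} by $L\bar{s^\sigma}\, w(s)$. Passing from the real-line distance to $\snorm{\cdot}$ (an upper bound on the circle metric) then yields
\begin{align*}
\Delta(t) \le L\int_0^t \Delta(s)\, \d s + L\bar{s^\sigma}\int_0^t w(s)\, \d s.
\end{align*}

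From here there are two equivalent routes. The cleanest is to note that on the lift the map $h(t):=T_{t,0}^\sigma[\mu]\phi - T_{t,0}^\sigma[\nu]\phi$ is $C^1$ and satisfies $|h'(t)|\le L|h(t)| + L\bar{s^\sigma}w(t)$, hence (using a one-sided Dini derivative of $|h|$, which is legitimate since $|h|$ is absolutely continuous)
\begin{align*}
\frac{\d}{\d t}\!\left(|h(t)|\, e^{-Lt}\right) \le L\bar{s^\sigma}\, e^{-Lt} w(t).
\end{align*}
Integrating on $[0,t]$ with $h(0)=0$ and using $\snorm{\cdot}\le|\cdot|$ produces exactly the claimed bound. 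Alternatively, the integral inequality above can be fed into the standard Grönwall lemma with $\alpha(t)=L\bar{s^\sigma}\int_0^t w$ and $\beta\equiv L$, and one integration by parts on the resulting nested integral rearranges the output into the form $L\bar{s^\sigma} e^{Lt}\int_0^t w(\tau) e^{-L\tau}\, \d\tau$.

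The only subtle point is the passage between the circle metric and its lift together with justifying the integrating-factor manipulation when $|h|$ fails to be smooth at zeros of $h$. This is routine: on the lift $h$ is $C^1$, $|h|$ is absolutely continuous, and its a.e.\ derivative satisfies $\frac{\d}{\d t}|h(t)| \le |h'(t)|$, which is all that the integrating-factor argument needs. Everything else reduces to the two Lipschitz estimates already established in Lemmas~\ref{lem:LipschitzVelocity} and~\ref{lem:KMeasure}.
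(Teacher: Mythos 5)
Your proposal is correct and follows essentially the same route as the paper: the same insertion of the mixed term $\pm(\mathcal K_\sigma\mu(s))(T_{s,0}^\sigma[\nu]\phi)$, the same use of Lemmas~\ref{lem:LipschitzVelocity} and~\ref{lem:KMeasure} to obtain $\Delta(t)\le L\int_0^t\Delta + L\bar{s^\sigma}\int_0^t w$, and then Gr\"onwall. The only difference is that you spell out explicitly (via the integrating factor, or integration by parts in the standard Gr\"onwall bound) how the inequality rearranges into the stated form $L\bar{s^\sigma}e^{Lt}\int_0^t w(\tau)e^{-L\tau}\,\d\tau$, a step the paper compresses into a single invocation of Gr\"onwall's Lemma.
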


\begin{proof}
Integrating \eqref{eq:Tflow} and then using Lemmas \ref{lem:LipschitzVelocity} and \ref{lem:KMeasure}, we can estimate
	\begin{align*}
		|T_{t,0}^\sigma[\mu]\phi-T^\sigma_{t,0}[\nu]\phi|_\S &= \left\lvert\int_0^t (\mathcal K_\sigma\mu(\tau))(T_{\tau,0}^\sigma[\mu]\phi)-(\mathcal K_\sigma\nu(\tau))(T_{\tau,0}^\sigma[\nu]\phi) \ \d\tau\right\rvert\\
		&\le \left\lvert\int_0^t (\mathcal K_\sigma\mu(\tau))(T_{\tau,0}^\sigma[\mu]\phi)-(\mathcal K_\sigma\mu(\tau))(T_{\tau,0}^\sigma[\nu]\phi) \ \d\tau\right\rvert\\
		&\qquad+\left\lvert\int_0^t (\mathcal K_\sigma\mu(\tau))(T_{\tau,0}^\sigma[\nu]\phi)-(\mathcal K_\sigma\nu(\tau))(T_{\tau,0}^\sigma[\nu]\phi) \ \d\tau\right\rvert\\
		&\le \int_0^t L\snorm{T_{\tau,0}^\sigma[\mu]\phi - T_{\tau,0}^\sigma[\mu]\phi} \ \d\tau\\
		&\qquad + \int_0^t L \bar{s^\sigma}\underbrace{\sum_{i=1}^M W_1(\mu_i(\tau),\nu_i(\tau))}_{=w(\tau)}\ \d\tau.
	\end{align*}
	Now introduce
	\begin{align*}
		v_\sigma(t) := \snorm{T_{t,0}^\sigma[\mu]\phi-T_{t,0}^\sigma[\nu]\phi}.
	\end{align*}
	Then, we have
	\begin{align*}
		v_\sigma(t)\le L\int_0^t v_\sigma(\tau) \ \d\tau + L\bar{s^\sigma} \int_0^t w(\tau)\ \d\tau.
	\end{align*}
	Gronwall's Lemma gives us
	\begin{align*}
		v_\sigma(t)\le L\bar{s^\sigma} e^{Lt} \int_0^t w(\tau) e^{-\tau L}\ \d\tau,
	\end{align*}
	which was the claim.
\end{proof}

\begin{lemma}[{\cite[Lemma 1]{Neunzert1978}}]\label{lem:Wasserstein_sup}
    Let $h_1,h_2\colon \S\to \S$ be bijective measurable mappings and let $\mu\in \mathcal P(\S)$. Then,
    \begin{align*}
        W_1(h_1\# \mu, h_2\#\mu) \le \sup_{\phi\in\S}\snorm{h_1(\phi)-h_2(\phi)}.
    \end{align*}
\end{lemma}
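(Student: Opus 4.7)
The plan is to exhibit an explicit coupling of $h_1\#\mu$ and $h_2\#\mu$ and to estimate the transport cost of that particular coupling using the supremum bound.

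First, I would define the map $H\colon \S\to\S\times\S$ by $H(\phi) = (h_1(\phi), h_2(\phi))$, which is measurable since $h_1,h_2$ are, and consider the push-forward $\gamma := H\#\mu \in \mathcal P(\S\times\S)$. I would then check directly that $\gamma$ has the correct marginals: for any Borel set $A\subset \S$,
\begin{align*}
    (M_1\gamma)(A) = \gamma(A\times \S) = \mu(H^{-1}(A\times\S)) = \mu(h_1^{-1}(A)) = (h_1\#\mu)(A),
\end{align*}
and similarly $M_2\gamma = h_2\#\mu$. Thus $\gamma$ is admissible in the infimum appearing in the coupling definition \eqref{eq:WassersteinCoupling} of $W_1(h_1\#\mu, h_2\#\mu)$.

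Next, I would invoke the change-of-variables formula for push-forward measures to rewrite the transport cost:
\begin{align*}
    W_1(h_1\#\mu, h_2\#\mu) \le \int_{\S\times\S} \snorm{\alpha-\beta}\ \d\gamma(\alpha,\beta) = \int_\S \snorm{h_1(\phi)-h_2(\phi)}\ \d\mu(\phi).
\end{align*}
Bounding the integrand pointwise by its supremum and using that $\mu$ is a probability measure then yields the inequality
\begin{align*}
    \int_\S \snorm{h_1(\phi)-h_2(\phi)}\ \d\mu(\phi) \le \sup_{\phi\in\S}\snorm{h_1(\phi)-h_2(\phi)},
\end{align*}
which completes the argument.

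There is no genuine obstacle here; the only point requiring any care is verifying that the diagonal push-forward $H\#\mu$ really is an admissible coupling, which is a straightforward consequence of measurability of $h_1,h_2$ (the bijectivity hypothesis in the statement is not actually used for the inequality, but ensures that $h_1\#\mu$ and $h_2\#\mu$ sit naturally in $\mathcal P(\S)$ in the setting of the paper). The step that must be written carefully is the application of the change of variables to a measure-theoretic push-forward, but this is entirely routine.
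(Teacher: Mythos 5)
Your proposal is correct, and it is worth noting that the paper itself offers no proof of this lemma at all: it is quoted verbatim with a citation to Neunzert's Lemma~1, so your argument is a self-contained replacement rather than a variant of anything in the text. The coupling argument is sound in every step: $\gamma=(h_1,h_2)\#\mu$ is measurable as a push-forward under the measurable map $H=(h_1,h_2)$, the marginal computation $\gamma(A\times\S)=\mu(h_1^{-1}(A))$ is exactly right, the change-of-variables identity $\int_{\S\times\S}\snorm{\alpha-\beta}\,\d\gamma(\alpha,\beta)=\int_\S\snorm{h_1(\phi)-h_2(\phi)}\,\d\mu(\phi)$ is the standard push-forward formula for a bounded measurable integrand, and the final bound uses only $\mu(\S)=1$. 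Your remark that bijectivity is not needed for the inequality is also accurate; it is an artifact of the setting in which the lemma is applied (the maps there are flow maps, hence bijective). For comparison, a proof in the spirit of the dual representation \eqref{eq:WassersteinLipschitz}, which is the formulation the paper leans on elsewhere, is equally short: for any $f\in\mathcal D$ one has $\left\lvert\int_\S f\,\d(h_1\#\mu)-\int_\S f\,\d(h_2\#\mu)\right\rvert=\left\lvert\int_\S \bigl(f(h_1(\phi))-f(h_2(\phi))\bigr)\,\d\mu(\phi)\right\rvert\le\int_\S\snorm{h_1(\phi)-h_2(\phi)}\,\d\mu(\phi)\le\sup_{\phi\in\S}\snorm{h_1(\phi)-h_2(\phi)}$, and taking the supremum over $f\in\mathcal D$ gives the claim. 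The coupling route proves slightly more (it bounds $W_1$ by the $L^1(\mu)$-distance of the maps, not just the supremum), while the dual route avoids constructing any coupling; either is a legitimate proof of the stated inequality.
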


For given $\mu^\mathrm{in}\in \mathcal P(\S)^M$ we now consider the mapping 
\begin{align}\label{eq:ContractionMapping}
	A\colon C_{\mathcal P(\S)}^M \to C_{\mathcal P(\S)}^M \quad \text{with} \quad (A\mu)_\sigma(t) := T_{t,0}^\sigma[\mu] \# \mu^\mathrm{in}_\sigma.
\end{align}

\begin{lemma} The mapping $A$, defined in \eqref{eq:ContractionMapping}, is indeed a self mapping.
\end{lemma}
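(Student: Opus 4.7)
The plan is to verify two properties of $(A\mu)_\sigma(t) := T^\sigma_{t,0}[\mu]\#\mu^{\mathrm{in}}_\sigma$: that it is a probability measure for every $t\in[0,T]$ and that the assignment $t\mapsto(A\mu)_\sigma(t)$ is weakly continuous. The first part is cheap: once one has a measurable (in fact continuous) mapping $T^\sigma_{t,0}[\mu]\colon\S\to\S$, its push-forward of a probability measure is again a probability measure. So the bulk of the work is to make sense of the flow $T^\sigma_{t,0}[\mu]$ and to transfer the time-continuity from the flow to the push-forward.

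First I would justify that~\eqref{eq:Tflow} actually admits a unique global flow for a given $\mu\in C_{\mathcal P(\S)}^M$. Lemma~\ref{lem:LipschitzVelocity} gives Lipschitz continuity of $\phi\mapsto(\mathcal K_\sigma\mu(t))(\phi)$ uniformly in $t$. For $t$-dependence, the weak continuity of each component $\mu_i$ together with the fact that $G_\sigma(\cdot,\phi)\in C(\S^{\abs{s^\sigma}})$ yields continuity of
\begin{align*}
t\mapsto\int_{\S^{\abs{s^\sigma}}}G_\sigma(\alpha,\phi)\,\d\mu(t)^{(s^\sigma)}(\alpha);
\end{align*}
indeed, weak continuity of tensor-product measures follows from weak continuity of the factors on the compact space $\S$, and it can also be read off from Lemma~\ref{lem:HighDimWasserstein} using the continuity of $\mu_i$ in $W_1$. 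Classical ODE theory then delivers a unique continuous flow $T^\sigma_{t,0}[\mu]$, which is a homeomorphism of $\S$ (hence measurable), so $(A\mu)_\sigma(t)\in\mathcal P(\S)$.

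Next, for the weak continuity in $t$, I would use the change-of-variables identity
\begin{align*}
\int_\S f(\phi)\,\d(A\mu)_\sigma(t)(\phi)=\int_\S f\bigl(T^\sigma_{t,0}[\mu]\phi\bigr)\,\d\mu^{\mathrm{in}}_\sigma(\phi)
\end{align*}
for an arbitrary $f\in C(\S)$. Since $\S^{\abs{s^\sigma}+1}$ is compact and $G_\sigma$ is continuous, the velocity field is bounded, $\abs{(\mathcal K_\sigma\mu(t))(\phi)}\le C:=\abs{\omega_\sigma}+\inorm{G_\sigma}$, so integrating~\eqref{eq:Tflow} gives $\snorm{T^\sigma_{t,0}[\mu]\phi-T^\sigma_{s,0}[\mu]\phi}\le C\abs{t-s}$ uniformly in $\phi\in\S$. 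The uniform continuity of $f$ on compact $\S$ then makes $\phi\mapsto f(T^\sigma_{t,0}[\mu]\phi)$ converge uniformly to $f(T^\sigma_{s,0}[\mu]\phi)$ as $t\to s$, and the integral against the probability measure $\mu^{\mathrm{in}}_\sigma$ inherits this continuity.

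The only subtle step is the first one, namely showing that time-continuity of the velocity field is actually present under the weak-continuity hypothesis on $\mu$, because the integrand involves the tensor product $\mu^{(s^\sigma)}$ rather than a single $\mu_i$. I expect this to be the main bookkeeping obstacle, but it is resolved either by the Stone--Weierstrass approximation of $C(\S^{\abs{s^\sigma}})$ by finite sums of tensor products of continuous one-variable functions, or, more concretely in this paper's framework, by invoking Lemma~\ref{lem:HighDimWasserstein} together with the Wasserstein-version of the continuity of each $\mu_i$ that is implied by weak continuity on the compact space~$\S$. Once this is in place, the rest of the proof is the routine compactness/uniform-continuity argument above.
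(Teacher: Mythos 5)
Your proposal is correct and follows essentially the same route as the paper: $(A\mu)_\sigma(t)$ is a probability measure because it is the push-forward under the (measurable) flow, and weak continuity is obtained from the change-of-variables identity together with continuity of $t\mapsto T^\sigma_{t,0}[\mu]\phi$. The only differences are cosmetic: you pass to the limit under the integral via the uniform bound $\snorm{T^\sigma_{t,0}[\mu]\phi-T^\sigma_{s,0}[\mu]\phi}\le C\abs{t-s}$ and uniform continuity of $f$, where the paper uses pointwise continuity in $\phi$ plus dominated convergence, and you additionally spell out the well-posedness of the flow \eqref{eq:Tflow} (time-continuity of the velocity field via weak continuity of $\mu$ and Lemma~\ref{lem:HighDimWasserstein}), which the paper takes for granted.
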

\begin{proof}
	It is clear, that $(A\mu)_\sigma(t)$ is again a probability measure, so it is left to show that $t\mapsto(A\mu)_\sigma(t)$ is weakly continuous. Note that for each $\phi \in \S$, the map $t\mapsto T^\sigma_{t,0}[\mu]\phi$ is continuous (even differentiable). Thus, for any $f\in C(\S)$, the composition $t\mapsto f(T^\sigma_{t,0}[\mu]\phi)$ is continuous as well and uniformly bounded. Consequently, by the change of variables rule for push forward measures and the dominated convergence theorem, we have
	\begin{align*}
		\lim_{t\to t^\star} \int_\S f(\phi)\ (A\mu)_\sigma(t,\d\phi) &= \lim_{t\to t^\star} \int_\S f(T^\sigma_{t,0}[\mu]\phi)\ \mu_\sigma^\mathrm{in}(\d\phi) = \int_\S \lim_{t\to t^\star}f(T^\sigma_{t,0}[\mu]\phi)\ \mu_\sigma^\mathrm{in}(\d\phi)\\
		&= \int_\S f(T^\sigma_{t^\star,0}[\mu]\phi)\ \mu_\sigma^\mathrm{in}(\d\phi) = \int_S f(\phi)\ (A\mu)_\sigma(t^\star,\d\phi),
	\end{align*}
	which proves weak continuity.
\end{proof}

Finally, we can establish existence and uniqueness:

\begin{theorem}[cf.~\cite{Neunzert1978}, Theorem 2]\label{thm:ExistenceMeasure}
	For given $\mu^\mathrm{in} = (\mu_1^\mathrm{in},\dots,\mu_M^\mathrm{in})\in \mathcal P(\S)^M$ there exists a unique solution $\mu\in C_{\mathcal P(\S)}^M$ for the system \eqref{eq:SystemOfEquations}--\eqref{eq:coupling}.
\end{theorem}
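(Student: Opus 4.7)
The plan is to prove the theorem via Banach's fixed point theorem applied to the mapping $A$ defined in \eqref{eq:ContractionMapping} on the complete metric space $(C_{\mathcal P(\S)}^M, d_\alpha)$ for a suitably large weighting parameter $\alpha$. The preceding lemmas have been assembled precisely so that this becomes a short calculation: Lemma~\ref{lem:Wasserstein_sup} converts $W_1$-distances between pushforward measures into sup-norm differences of the flows, Lemma~\ref{lem:estimate} bounds those flow differences in terms of the integral $\int_0^t w(\tau)\,\mathrm d\tau$, and the exponential weight $e^{-\alpha t}$ in the metric $d_\alpha$ will then absorb the Gronwall-type exponential growth.

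First I would verify that $A$ is a strict contraction on $(C_{\mathcal P(\S)}^M, d_\alpha)$ for a well-chosen $\alpha$. Since $(A\mu)_\sigma(t) = T_{t,0}^\sigma[\mu]\#\mu^{\mathrm{in}}_\sigma$ and $(A\nu)_\sigma(t) = T_{t,0}^\sigma[\nu]\#\mu^{\mathrm{in}}_\sigma$ share the same base measure $\mu^{\mathrm{in}}_\sigma$, Lemma~\ref{lem:Wasserstein_sup} yields
\begin{align*}
W_1\bigl((A\mu)_\sigma(t), (A\nu)_\sigma(t)\bigr) \le \sup_{\phi\in\S}\snorm{T_{t,0}^\sigma[\mu]\phi - T_{t,0}^\sigma[\nu]\phi}.
\end{align*}
Combining this with Lemma~\ref{lem:estimate} and summing over $\sigma\in[M]$ gives, with $S := L \sum_{\sigma=1}^M \bar{s^\sigma}$,
\begin{align*}
\sum_{\sigma=1}^M W_1\bigl((A\mu)_\sigma(t),(A\nu)_\sigma(t)\bigr) \le S\, e^{Lt} \int_0^t w(\tau)\, e^{-L\tau}\,\mathrm d\tau.
\end{align*}
Using $w(\tau) \le e^{\alpha\tau} d_\alpha(\mu,\nu)$ and multiplying by $e^{-\alpha t}$, the integral evaluates to at most $e^{(\alpha-L)t}/(\alpha-L)$ for $\alpha>L$, which collapses the exponential factors and produces
\begin{align*}
d_\alpha(A\mu, A\nu) \le \frac{S}{\alpha - L}\, d_\alpha(\mu,\nu).
\end{align*}
Choosing $\alpha > L + S$ makes $A$ a strict contraction, so Banach's fixed point theorem on the complete space from Lemma~\ref{lem:basicmetricspace} produces a unique fixed point $\mu \in C_{\mathcal P(\S)}^M$.

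Next I would reconstruct the mean-field characteristic flow from this fixed point by setting $\Phi_\sigma(t,\xi^{\mathrm{in}}_\sigma,\mu^{\mathrm{in}}) := T_{t,0}^\sigma[\mu]\xi^{\mathrm{in}}_\sigma$. By definition of $T_{t,0}^\sigma[\mu]$ in \eqref{eq:Tflow} this flow satisfies the ODE \eqref{eq:SystemOfEquations_Derivative} and the initial condition \eqref{eq:SystemOfEquations_InitialCond}, while the fixed point identity $\mu = A\mu$ is exactly the pushforward relation \eqref{eq:SystemOfEquations_PushForward}. For uniqueness, any other solution $\tilde\mu \in C_{\mathcal P(\S)}^M$ of the characteristic system yields via \eqref{eq:SystemOfEquations_Derivative}--\eqref{eq:SystemOfEquations_InitialCond} a flow which, by classical ODE uniqueness applied to the (Lipschitz, thanks to Lemma~\ref{lem:LipschitzVelocity}) velocity field $(\mathcal K_\sigma\tilde\mu(t))(\cdot)$, must coincide with $T_{t,0}^\sigma[\tilde\mu]$; then \eqref{eq:SystemOfEquations_PushForward} forces $\tilde\mu = A\tilde\mu$, and uniqueness of the fixed point gives $\tilde\mu = \mu$.

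The main technical hurdle is keeping the contraction bookkeeping clean: one has to interchange $W_1$ on $\mathcal P(\S)$, the sup-norm on flows, and the high-dimensional $W_1$ on product measures (handled by Lemma~\ref{lem:HighDimWasserstein}) without accumulating extra factors, so that the exponential weight $e^{-\alpha t}$ genuinely beats the Gronwall growth $e^{Lt}$. Once the inequality $d_\alpha(A\mu,A\nu) \le \frac{S}{\alpha-L} d_\alpha(\mu,\nu)$ is in place, everything else is a direct translation between fixed points of $A$ and solutions of \eqref{eq:SystemOfEquations}--\eqref{eq:coupling}.
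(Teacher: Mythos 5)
Your proposal is correct and follows essentially the same route as the paper: proving that $A$ from \eqref{eq:ContractionMapping} is a contraction on $(C_{\mathcal P(\S)}^M, d_\alpha)$ for $\alpha$ large (with the same contraction constant $L\sum_\sigma \bar{s^\sigma}/(\alpha-L)$, obtained via Lemmas~\ref{lem:Wasserstein_sup} and~\ref{lem:estimate}), applying Banach's fixed point theorem, and identifying fixed points of $A$ with solutions of \eqref{eq:SystemOfEquations}--\eqref{eq:coupling} through $\Phi_\sigma(t,\xi^\mathrm{in}_\sigma,\mu^\mathrm{in}) = T_{t,0}^\sigma[\mu]\xi^\mathrm{in}_\sigma$. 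The only detail worth making explicit is the self-mapping property of $A$ (weak continuity of $t\mapsto (A\mu)_\sigma(t)$), which the paper isolates in a separate lemma before the theorem.
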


\begin{proof}
	Let us prove, that for $\alpha$ large enough, $A$ is a contraction with respect to the metric $d_\alpha$, defined in~\eqref{eq:metric}. To estimate $d_\alpha(A\mu, A\nu)$, we use Lemmas~\ref{lem:estimate} and~\ref{lem:Wasserstein_sup}:
	{\allowdisplaybreaks
	\begin{align*}
		d_\alpha(A\mu,A\nu) &= \sup_{t\in [0,T]} e^{-\alpha t} \sum_{\sigma=1}^{M}W_1((A\mu)_\sigma(t), (A\nu)_\sigma(t))\\
		&= \sup_{t\in [0,T]} e^{-\alpha t} \sum_{\sigma=1}^{M}W_1(T_{t,0}^\sigma [\mu] \#\mu_\sigma^\mathrm{in},  T_{t,0}^\sigma [\nu]\#\mu^\mathrm{in}_\sigma)\\
		&\le \sup_{t\in [0,T]} e^{-\alpha t} \sum_{\sigma=1}^{M}\sup_{\phi\in \S} \snorm{T_{t,0}^\sigma[\mu]\phi - T_{t,0}^\sigma[\nu]\phi}\\
		&\le \sup_{t\in [0,T]} e^{-\alpha t} \sum_{\sigma=1}^{M} L\bar{s^\sigma} e^{Lt}\int_0^t w(\tau) e^{-\tau L} \ \d\tau\\
		&\le L\left(\sum_{\sigma=1}^{M}\bar{s^\sigma}\right) \sup_{t\in [0,T]} e^{-\alpha t+Lt}\int_0^t e^{-\tau (L-\alpha)} w(\tau) e^{-\tau\alpha} \ \d\tau\\
		&\le L\left(\sum_{\sigma=1}^{M}\bar{s^\sigma}\right) \sup_{t\in [0,T]} e^{-\alpha t+Lt}\int_0^t e^{-\tau (L-\alpha)} \left(\sup_{\tau\in [0,T]} w(\tau) e^{-\tau\alpha}\right) \ \d\tau\\
		&\le L\left(\sum_{\sigma=1}^{M}\bar{s^\sigma}\right) \sup_{t\in [0,T]} e^{-\alpha t+Lt}\int_0^t e^{-\tau (L-\alpha)} d_\alpha(\mu,\nu) \ \d\tau\\
		&\le L\left(\sum_{\sigma=1}^{M}\bar{s^\sigma}\right)d_\alpha(\mu,\nu) \sup_{t\in [0,T]} e^{-\alpha t+Lt} \frac{e^{-t(L-\alpha)}-1}{\alpha-L}\\
		&\le \left(\sum_{\sigma=1}^{M}\bar{s^\sigma}\right) \frac{L d_\alpha(\mu,\nu)}{\alpha-L } \underbrace{\sup_{t\in [0,T]} \left(1-e^{-\alpha t+Lt}\right)}_{\le 1 \text{ for } \alpha > L}.\\
	\end{align*}
	}%
	Now, choose $\alpha > L$ such that
	\begin{align*}
		\frac{\left(\sum_{\sigma=1}^{M}\bar{s^\sigma}\right)L}{\alpha-L} <1.
	\end{align*}
	This shows that $A$ is a contraction for suitable values of $\alpha$. Therefore, by the contraction mapping principle, $A$ has a unique fixed point $\mu$. To conclude, if $\mu$ is the fixed point of $A$, $\mu$ satisfies \eqref{eq:SystemOfEquations}--\eqref{eq:coupling} with $\Phi_\sigma(t,\xi^\mathrm{in}_\sigma,\mu^\mathrm{in}) = T_{t,0}^\sigma[\mu]\xi^\mathrm{in}_\sigma$. Conversely, if $\mu$ satisfies \eqref{eq:SystemOfEquations}--\eqref{eq:coupling}, $\mu$ is also a fixed point of $A$, which completes the proof.
\end{proof}

As a next step, we want to establish continuous dependence on initial conditions for the characteristic system. Since this system links to the finite-dimensional generalized Kuramoto models via the empirical measure as well as to the mean-field limit, we may later conclude from a suitable continuous dependence that the mean-field limit is a good approximation on finite time scales for the generalized Kuramoto system of oscillators.

\begin{lemma}[{\cite[Lemma 3]{Neunzert1978}}]\label{lem:LipschitzWasserstein}
	Let $T\colon X\to X$ be a surjective and Lipschitz continuous mapping with Lipschitz constant $L$ and let $\mu,\nu\in\mathcal P(X)$. Then,
	\begin{align*}
		W_1(T\#\mu, T\#\nu) \le \max(1,L)\ W_1(\mu,\nu).
	\end{align*}
\end{lemma}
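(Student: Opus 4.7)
The plan is to use the Kantorovich--Rubinstein dual characterization of $W_1$, which on a general metric space $X$ takes the same form as \eqref{eq:WassersteinLipschitz}, namely $W_1(\mu,\nu) = \sup_f \left|\int_X f\, \d\mu - \int_X f\, \d\nu\right|$ with the supremum over $1$-Lipschitz functions $f\colon X\to\R$. First I would apply this formula to the push-forward measures and combine it with the change-of-variables identity $\int_X f\, \d(T\#\mu) = \int_X f\circ T\, \d\mu$ (valid for any bounded measurable $f$) to rewrite
\begin{equation*}
W_1(T\#\mu,T\#\nu) = \sup_{f}\, \left|\int_X f\circ T\, \d\mu - \int_X f\circ T\, \d\nu\right|.
\end{equation*}

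Next I would exploit the Lipschitz assumption on $T$: if $f$ is $1$-Lipschitz and $T$ is $L$-Lipschitz, then $f\circ T$ is $L$-Lipschitz, so $(f\circ T)/\max(1,L)$ is again $1$-Lipschitz. Bounding each admissible test function contribution by $\max(1,L)\, W_1(\mu,\nu)$ and taking the supremum over $f$ then yields the lemma.

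An alternative, equally short, route uses the primal coupling formulation: given a near-optimal transport plan $\gamma$ for $(\mu,\nu)$, I would push it forward under $T\otimes T$, verify via the marginal identity $M_i((T\otimes T)\#\gamma) = T\# M_i\gamma$ that the result is a transport plan between $T\#\mu$ and $T\#\nu$, and bound its cost by $\int d(Tx,Ty)\, \d\gamma(x,y) \le L\int d(x,y)\, \d\gamma(x,y)$. Passing to the infimum over $\gamma$ in fact recovers the (slightly sharper) bound $L\, W_1(\mu,\nu)$, which trivially implies the claimed estimate.

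I do not expect any serious obstacle; the argument is essentially a one-line consequence of how the push-forward interacts with either dual representation of $W_1$. The only points to handle with care are that the surjectivity hypothesis---inherited from Neunzert's formulation---plays no actual role in this upper bound, and that the duality and change-of-variables formulas are invoked in sufficient generality (bounded measurable integrands on a Polish-type metric space) to cover the later application with $X = \S$.
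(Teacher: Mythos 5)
Your proposal is correct, and there is nothing in the paper to compare it against in detail: Lemma~\ref{lem:LipschitzWasserstein} is imported from Neunzert (Lemma~3 of the cited work) without proof, exactly like Lemma~\ref{lem:Wasserstein_sup}, so your argument supplies the missing details rather than paralleling an in-paper proof. Both of your routes are sound. The dual route only needs that $f\circ T$ is $L$-Lipschitz whenever $f$ is $1$-Lipschitz, so that $(f\circ T)/\max(1,L)$ is again an admissible test function; note that the estimate of each term by $\max(1,L)\,W_1(\mu,\nu)$ uses only the easy direction of \eqref{eq:WassersteinLipschitz}, while the identification of $W_1(T\#\mu,T\#\nu)$ with the supremum uses the nontrivial direction, which the paper itself asserts (and which holds on the compact space $\S$ relevant for all later applications). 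The primal route via $(T\otimes T)\#\gamma$ avoids duality altogether and, as you observe, yields the sharper constant $L$, which trivially implies the stated bound. One clarifying remark: the constant $\max(1,L)$ rather than $L$ in Neunzert's formulation reflects his use of a bounded-Lipschitz--type distance whose test functions are also uniformly bounded (by $1$); there the rescaling by $\max(1,L)$ is exactly what preserves both constraints, so your dual argument is in fact the one that transfers verbatim to that setting. You are also right that surjectivity of $T$ plays no role in this upper bound; it is simply inherited from the formulation in the source, where $T$ is a flow map.
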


The next result is sometimes also referred to as a Dobrushin-type estimate:

\begin{theorem}\label{thm:CtsDepIni}
	Let $\mu^\mathrm{in},\nu^\mathrm{in}\in\mathcal P(\S)^M$ be two different initial measures and let $\mu,\nu\colon [0,T] \to \mathcal P(\S)^M$ be the solutions to system \eqref{eq:SystemOfEquations}--\eqref{eq:coupling} with $\mu(0)=\mu^\mathrm{in}, \nu(0)=\nu^\mathrm{in}$. Then, for $t>0$ we have
	\begin{align*}
		\sum_{\sigma = 1}^M W_1(\mu_\sigma(t), \nu_\sigma(t)) \le e^{L\left(\sum_{\sigma=1}^{M}\bar{s^\sigma}\right)t+Lt} \sum_{\sigma = 1}^M W_1(\mu^\mathrm{in}_\sigma, \nu_\sigma^\mathrm{in}).
	\end{align*}
\end{theorem}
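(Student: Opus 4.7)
The plan is to exploit the push-forward representation from Theorem~\ref{thm:ExistenceMeasure}, namely $\mu_\sigma(t)=T_{t,0}^\sigma[\mu]\#\mu_\sigma^\mathrm{in}$ and $\nu_\sigma(t)=T_{t,0}^\sigma[\nu]\#\nu_\sigma^\mathrm{in}$, and to split the Wasserstein distance between these two measures into (a) a piece where the flow is the same and only the initial measure changes, and (b) a piece where the initial measure is the same and only the flow changes. Concretely, by the triangle inequality
\begin{align*}
W_1(\mu_\sigma(t),\nu_\sigma(t))\le W_1\!\left(T_{t,0}^\sigma[\mu]\#\mu_\sigma^\mathrm{in},T_{t,0}^\sigma[\mu]\#\nu_\sigma^\mathrm{in}\right)+W_1\!\left(T_{t,0}^\sigma[\mu]\#\nu_\sigma^\mathrm{in},T_{t,0}^\sigma[\nu]\#\nu_\sigma^\mathrm{in}\right).
\end{align*}

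For the first summand I would apply Lemma~\ref{lem:LipschitzWasserstein} together with Corollary~\ref{cor:LipschitzFlow}: the flow $T_{t,0}^\sigma[\mu]$ is a bijection of $\S$ with Lipschitz constant at most $e^{Lt}\ge 1$, so this summand is bounded by $e^{Lt}\,W_1(\mu_\sigma^\mathrm{in},\nu_\sigma^\mathrm{in})$. For the second summand the two push-forwards act on the same reference measure, so Lemma~\ref{lem:Wasserstein_sup} bounds it by $\sup_{\phi\in\S}\snorm{T_{t,0}^\sigma[\mu]\phi-T_{t,0}^\sigma[\nu]\phi}$, and Lemma~\ref{lem:estimate} bounds the latter by $L\bar{s^\sigma}e^{Lt}\int_0^t w(\tau)e^{-L\tau}\,\d\tau$ with $w$ as in \eqref{eq:w_def} applied to the solutions $\mu,\nu$.

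Writing $W(t):=\sum_{\sigma=1}^M W_1(\mu_\sigma(t),\nu_\sigma(t))$ (so that $w(t)=W(t)$ in the present context) and summing over $\sigma\in[M]$ yields the integral inequality
\begin{align*}
W(t)\le e^{Lt}\,W(0)+L\!\left(\sum_{\sigma=1}^{M}\bar{s^\sigma}\right)e^{Lt}\int_0^t W(\tau)\,e^{-L\tau}\,\d\tau.
\end{align*}
Introducing $\tilde W(t):=e^{-Lt}W(t)$ turns this into the clean form
\begin{align*}
\tilde W(t)\le W(0)+L\!\left(\sum_{\sigma=1}^M \bar{s^\sigma}\right)\int_0^t \tilde W(\tau)\,\d\tau,
\end{align*}
and Gronwall's lemma gives $\tilde W(t)\le W(0)\,e^{L(\sum_\sigma \bar{s^\sigma})t}$, i.e.\ precisely the claimed bound after multiplication by $e^{Lt}$.

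I do not anticipate a serious obstacle: all of the analytic work (Lipschitz bounds on the velocity field, Lipschitz bounds on the flow, comparison of flows driven by different measures, and the key estimate $W_1(h\#\mu,h\#\nu)\le\max(1,\mathrm{Lip}(h))W_1(\mu,\nu)$) has already been assembled in Lemmas~\ref{lem:LipschitzVelocity}--\ref{lem:LipschitzWasserstein}. The only mildly delicate point is making sure to use the correct inequality twice — once with the flow as the Lipschitz map acting on different initial data, and once with the initial measure as the common reference and the two flows differing — and then handling the resulting Volterra-type inequality via the integrating-factor substitution $\tilde W = e^{-Lt}W$ before applying Gronwall.
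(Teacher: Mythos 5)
Your proposal is correct and follows essentially the same argument as the paper: the same triangle-inequality splitting into a same-flow/different-initial-data piece (handled by Lemma~\ref{lem:LipschitzWasserstein} with Corollary~\ref{cor:LipschitzFlow}) and a same-initial-data/different-flow piece (handled by Lemmas~\ref{lem:Wasserstein_sup} and~\ref{lem:estimate}), followed by summation over $\sigma$, division by $e^{Lt}$, and Gronwall. The only difference is the cosmetic choice of intermediate measure ($T_{t,0}^\sigma[\mu]\#\nu_\sigma^\mathrm{in}$ instead of the paper's $T_{t,0}^\sigma[\nu]\#\mu_\sigma^\mathrm{in}$), which changes nothing in the estimates.
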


\begin{proof}
	Let $\pi_\sigma(t) := T_{t,0}^\sigma[\nu] \# \mu_\sigma^\mathrm{in}$ for $t\in [0,T]$. Then,
	\begin{align*}
		\sum_{\sigma = 1}^M W_1(\mu_\sigma(t), \nu_\sigma(t)) \le \sum_{\sigma = 1}^M\Big( W_1(\mu_\sigma(t), \pi_\sigma(t)) + W_1(\pi_\sigma(t), \nu_\sigma(t))\Big).
	\end{align*}
	By using Lemma \ref{lem:estimate}, Lemma \ref{lem:Wasserstein_sup} and the notation introduced in~\eqref{eq:w_def} we see, that
	\begin{align*}
		W_1(\mu_\sigma(t), \pi_\sigma(t)) &= W_1( T_{t,0}^\sigma[\mu] \#\mu^\mathrm{in}_\sigma, T_{t,0}^\sigma[\nu] \#\mu^\mathrm{in}_\sigma )\\
		&\le \sup_{\phi\in\S} \snorm{T_{t,0}^\sigma[\mu]\phi-T_{t,0}^\sigma[\nu]\phi}\\
		&\le L\bar{s^\sigma} e^{Lt}\int_0^t w(\tau) e^{-\tau L}\ \d \tau.
	\end{align*}
	By Lemma \ref{lem:LipschitzWasserstein}, we obtain
	\begin{align*}
		W_1(\pi_\sigma(t), \nu_\sigma(t)) &= W_1(   T_{t,0}^\sigma[\nu] \# \mu_\sigma^\mathrm{in} , T_{t,0}^\sigma[\nu]\#\nu_\sigma^\mathrm{in})\\
		&\le \max(1,L_T)\ W_1(\mu_\sigma^\mathrm{in}, \nu_\sigma^\mathrm{in}),
	\end{align*}
	where $L_T$ is the Lipschitz constant of $T_{t,0}^\sigma[\nu]$. By Corollary \ref{cor:LipschitzFlow}, we have $L_T = e^{Lt}$.
	Putting everything together, we get
	\begin{align*}
		\sum_{\sigma = 1}^M W_1(\mu_\sigma(t),\nu_\sigma(t)) &\le \sum_{\sigma=1}^M \Bigg[ L\bar{s^\sigma}e^{Lt} \int_0^t w(\tau) e^{-\tau L} \d \tau + e^{Lt}W_1(\mu^\mathrm{in}_\sigma, \nu^\mathrm{in}_\sigma) \Bigg]\\
		&=L\left(\sum_{\sigma=1}^{M}\bar{s^\sigma}\right) e^{Lt}\int_0^t w(\tau) e^{-\tau L } \d \tau+e^{Lt}w(0).
	\end{align*}
	Now, we can rewrite the previous inequality as follows:
	\begin{align*}
		w(t)\le e^{Lt}w(0) + L\left(\sum_{\sigma=1}^{M}\bar{s^\sigma} \right) e^{Lt}\int_0^t w(\tau) e^{-\tau L}\ \d \tau.
	\end{align*}
	Dividing by $e^{Lt}$ and then applying Gronwall's Lemma yields that
	\begin{align*}
		w(t)\le w(0) e^{L\left(\sum_{\sigma=1}^{M}\bar{s^\sigma}\right)t+Lt},
	\end{align*}
	so the result follows.
\end{proof}

%===========================================================================================================================================
\subsection{Special Initial Measures}
%===========================================================================================================================================

%===========================================================================================================================================
\subsubsection{Vlasov-Fokker-Planck Mean-Field Equation for Absolutely Continuous Measures}
%===========================================================================================================================================

As a first special case, we can state the mean-field limit PDE, which is given by the initial value problem
\begin{align}\label{eq:initialvalue}
	\frac{\partial}{\partial t}\rho_\sigma(t,\phi) + \frac{\partial }{\partial \phi}\Big( V_\sigma[\rho](t,\phi) \rho_\sigma(t,\phi)\Big) = 0,
\end{align}
with
\begin{align*}
	V_\sigma[\rho](t,\phi) = (\mathcal K_\sigma \mu_{\rho(t)})(\phi)
\end{align*}
and $\sigma \in [M]$. Here, $\mu_{\rho(t)}\in \mathcal P(\S)^M$ is the collection of measures whose densities are given by $(\rho_\sigma(t,\cdot))_{\sigma \in [M]}$. The initial conditions
\begin{align}\label{eq:initialvalueini}
	\rho_\sigma(0,\phi) = \rho^\mathrm{in}_\sigma(\phi)
\end{align}
are given such that
\begin{align*}
	\int_\S \rho^\mathrm{in}_\sigma(\phi) \ \d\phi = 1.
\end{align*}
We already remark that we are eventually going to show below the natural interpretation that $\rho_\sigma(t,\phi)$ describes the probability of finding an oscillator of population $\sigma$ at time $t$ at a position $\phi$.

\begin{definition}[Weak solution]

A collection of measurable functions $(\rho_\sigma)_{\sigma \in [M]}$ with $\rho_\sigma\colon [0,T]\times\S\to\R$ is a \emph{weak solution} of the initial value problem \eqref{eq:initialvalue}--\eqref{eq:initialvalueini} if the following two conditions are fulfilled:
\begin{itemize}
	\item For every $f\in C(\S)$ and for all $\sigma \in [M]$, the maps $t\mapsto \int_\S f(\alpha)\rho_\sigma(t, \alpha) \d \alpha$ are continuous.
	\item For all $\sigma\in[M]$ and $w_\sigma\in C^1([0,T] \times\S)$ with compact support in $[0,T)\times\S$, the following identity holds
	\begin{align*}
		 \int_0^T\int_\S \rho_\sigma(t,\phi)\left(\frac{\partial}{\partial t} w_\sigma(t,\phi) + V_\sigma[\rho](t,\phi) \frac{\partial}{\partial \phi}w_\sigma(t,\phi)\right) \d \phi \d t+\int_\S \rho^\mathrm{in}_\sigma(\phi)w_\sigma(0,\phi)\ \d \phi = 0.
	\end{align*}
\end{itemize}

\end{definition}

\begin{theorem}\label{thm:ExistenceInitialValueProblem}
	For given $\rho^\mathrm{in}_1,\dots, \rho^\mathrm{in}_M\in L^1(\S)$ with $\int \rho_\sigma^\mathrm{in} = 1$, the initial value problem \eqref{eq:initialvalue}--\eqref{eq:initialvalueini} has a weak solution.
\end{theorem}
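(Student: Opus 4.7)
The plan is to build a weak solution directly from the measure-valued solution provided by Theorem \ref{thm:ExistenceMeasure}, after showing that absolute continuity of the initial data is propagated by the characteristic flow. First I would set $\d\mu^\mathrm{in}_\sigma := \rho^\mathrm{in}_\sigma\,\d\phi$, which defines $\mu^\mathrm{in}\in\mathcal P(\S)^M$, and invoke Theorem \ref{thm:ExistenceMeasure} to obtain the unique solution $\mu=(\mu_1,\dots,\mu_M)\in C^M_{\mathcal P(\S)}$ together with the corresponding flows $T^\sigma_{t,0}[\mu]$ from \eqref{eq:Tflow}. The candidate densities are then the Radon--Nikodym derivatives $\rho_\sigma(t,\cdot) := \d\mu_\sigma(t)/\d\phi$, provided these exist.

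The key intermediate step is to show that $\mu_\sigma(t)\ll\d\phi$ for every $t\in[0,T]$. Corollary \ref{cor:LipschitzFlow} already yields the Lipschitz bound $e^{Lt}$ for $T^\sigma_{t,0}[\mu]$; applying the same Gronwall argument to the reversed-time ODE bounds the Lipschitz constant of the inverse flow $T^\sigma_{0,t}[\mu]$ by the same quantity, so $T^\sigma_{t,0}[\mu]$ is a bi-Lipschitz homeomorphism of $\S$. Bi-Lipschitz maps preserve Lebesgue null sets, so from the push-forward identity $\mu_\sigma(t)=T^\sigma_{t,0}[\mu]\#\mu^\mathrm{in}_\sigma$ it follows that $\mu_\sigma(t)$ is absolutely continuous and the density $\rho_\sigma(t,\cdot)\in L^1(\S)$ is well defined, with $V_\sigma[\rho](t,\phi)=(\mathcal K_\sigma\mu(t))(\phi)$ making sense.

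Next I would verify the two conditions in the definition of a weak solution. Continuity of $t\mapsto\int_\S f(\alpha)\rho_\sigma(t,\alpha)\,\d\alpha$ for every $f\in C(\S)$ is immediate from the weak continuity of $\mu_\sigma$. For the weak PDE identity, I fix $w_\sigma\in C^1([0,T]\times\S)$ with compact support in $[0,T)\times\S$ and use the change-of-variables formula for push-forwards to rewrite
\begin{align*}
    \int_\S w_\sigma(t,\phi)\rho_\sigma(t,\phi)\,\d\phi \;=\; \int_\S w_\sigma\bigl(t,\,T^\sigma_{t,0}[\mu]\phi\bigr)\rho^\mathrm{in}_\sigma(\phi)\,\d\phi.
\end{align*}
The chain rule applied to $w_\sigma(t,T^\sigma_{t,0}[\mu]\phi)$ together with \eqref{eq:Tflow} provides an integrand that is uniformly bounded by $\|w_\sigma\|_{C^1}(1+\|\mathcal K_\sigma\mu\|_\infty)\rho^\mathrm{in}_\sigma\in L^1(\S)$, so dominated convergence permits differentiation in $t$ under the integral. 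Transforming back via $T^\sigma_{t,0}[\mu]\#\mu^\mathrm{in}_\sigma=\mu_\sigma(t)$ yields
\begin{align*}
    \frac{\d}{\d t}\int_\S w_\sigma(t,\phi)\rho_\sigma(t,\phi)\,\d\phi \;=\; \int_\S\bigl(\partial_t w_\sigma(t,\phi)+V_\sigma[\rho](t,\phi)\partial_\phi w_\sigma(t,\phi)\bigr)\rho_\sigma(t,\phi)\,\d\phi,
\end{align*}
and integrating over $[0,T]$ while using the boundary condition $w_\sigma(T,\cdot)\equiv 0$ produces exactly the required weak identity.

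The main obstacle I anticipate is the propagation of absolute continuity, i.e., upgrading Corollary \ref{cor:LipschitzFlow} to a bi-Lipschitz statement by controlling the inverse flow on the circle. Everything downstream is then a clean consequence of Theorem \ref{thm:ExistenceMeasure}, the change-of-variables formula, and a standard differentiation-under-the-integral argument; no extra regularity of the data beyond $\rho^\mathrm{in}_\sigma\in L^1(\S)$ is needed.
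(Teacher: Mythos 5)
Your proposal is correct and takes essentially the same route as the paper: build $\mu^\mathrm{in}$ from the densities, obtain the measure-valued solution and characteristic flow from Theorem \ref{thm:ExistenceMeasure}, push the initial data forward, and verify the weak identity by recognizing $\partial_t w_\sigma + V_\sigma[\rho]\partial_\phi w_\sigma$ along characteristics as the total time derivative of $w_\sigma(t,T^\sigma_{t,0}[\mu]\phi)$ and using $w_\sigma(T,\cdot)=0$. The only (cosmetic) difference is the absolute-continuity step: the paper invokes Rademacher's theorem and the change-of-variables formula to write the density explicitly as $\rho_\sigma(t,\phi)=\rho^\mathrm{in}_\sigma(T^\sigma_{0,t}[\mu]\phi)\left\lvert \frac{\partial}{\partial\phi}T^\sigma_{0,t}[\mu]\phi\right\rvert$, whereas you argue that the Lipschitz inverse flow maps Lebesgue-null sets to null sets and take a Radon--Nikodym derivative---both justifications are valid.
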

\begin{proof}
	We consider the measures $\mu_\sigma^\mathrm{in}$ given by $\mu_\sigma^\mathrm{in}(A) = \int_A \rho_\sigma^\mathrm{in}(\phi)~\d \phi$. Theorem \ref{thm:ExistenceMeasure} yields the existence of a unique solution $\mu(t) = (\mu_1(t),\dots,\mu_M(t))$ for the system \eqref{eq:SystemOfEquations}--\eqref{eq:coupling}, with $\mu_\sigma(t) =T^\sigma_{t,0}[\mu] \# \mu^\mathrm{in}_\sigma$. Since Corollary~\ref{cor:LipschitzFlow} gives us Lipschitz continuity of $T^\sigma_{t,0}[\mu]$, Rademacher's Theorem tells us that it is almost everywhere differentiable with essentially bounded weak derivative \cite[Chapter 5.8, Theorem 4]{Evans2010}. Now, for any measurable set $A\subset \S$, the change-of-variables formula yields
	\begin{align*}
		\mu_\sigma(t)(A) = \mu_\sigma^\mathrm{in} \circ T^\sigma_{0,t}[\mu]A = \int_{T^\sigma_{0,t}[\mu]A} \rho^\mathrm{in}_\sigma(\phi)\ \d \phi = \int_A \rho^\mathrm{in}_\sigma(T^\sigma_{0,t}[\mu]\phi)\left\lvert\frac{\partial}{\partial \phi}T^\sigma_{0,t}[\mu]\phi\right\rvert\ \d\phi.
	\end{align*}
	So $\mu_\sigma(t)$ is absolutely continuous with density
	\begin{align*}
		\rho_\sigma(t,\phi) = \rho^\mathrm{in}_\sigma(T^\sigma_{0,t}[\mu]\phi)\left\lvert\frac{\partial}{\partial \phi}T^\sigma_{0,t}[\mu]\phi\right\rvert.
	\end{align*}
	To show that $(\rho_\sigma)_{\sigma \in [M]}$ is a weak solution of the initial value problem, we take $w_\sigma \in C^1([0,T]\times \S)$ with compact support in $[0,T)\times\S$ and define
	\begin{align*}
		h_\sigma(t,\phi) := \frac{\partial}{\partial t}w_\sigma(t,\phi) + V_\sigma[\rho](t,\phi)\frac{\partial}{\partial \phi}w_\sigma(t,\phi),\quad \sigma \in [M].
	\end{align*}
	Now, we calculate
	\begin{align*}
		\int_0^T\int_\S \rho_\sigma(t,\phi)h_\sigma(t,\phi)\ \d\phi\d t &= \int_0^T\int_\S\rho^\mathrm{in}_\sigma(T^\sigma_{0,t}[\mu]\phi)\left\lvert\frac{\partial}{\partial \phi}T^\sigma_{0,t}[\mu]\phi\right\rvert h_\sigma(t,\phi) \ \d \phi\d t\\
		&= \int_\S \rho_\sigma^\mathrm{in}(\phi)\int_0^T h_\sigma(t,T_{t,0}^\sigma[\mu]\phi)\ \d t\d\phi.
	\end{align*}
	By using the chain rule, it turns out, that $h_\sigma(t,T_{t,0}^\sigma[\mu]\phi)$ is the derivative of $w_\sigma(t,T_{t,0}^\sigma[\mu]\phi)$:
	\begin{align*}
		\frac{\d}{\d t} w_\sigma(t,T^\sigma_{t,0}[\mu]\phi) &= \frac\partial{\partial t} w_\sigma(t,T^\sigma_{t,0}[\mu]\phi) + \frac{\partial}{\partial \phi} w_\sigma(t,T^\sigma_{t,0}[\mu]\phi) \frac{\partial}{\partial t}T^\sigma_{t,0}[\mu]\phi\\
		&=\frac{\partial}{\partial t} w_\sigma(t,T^\sigma_{t,0}[\mu]\phi) + \frac{\partial}{\partial\phi} w_\sigma(t,T^\sigma_{t,0}[\mu]\phi)(\mathcal K_\sigma\mu(t))(\phi)\\
		&=\frac{\partial}{\partial t} w_\sigma(t,T^\sigma_{t,0}[\mu]\phi) + \frac{\partial}{\partial \phi} w_\sigma(t,T^\sigma_{t,0}[\mu]\phi)V_\sigma[\rho](t,\phi) = h_\sigma(t,T_{t,0}^\sigma[\mu]\phi).
	\end{align*}
	Putting these two calculations together, we obtain
	\begin{align*}
		\int_0^T\int_\S \rho_\sigma(t,\phi)h_\sigma(t,\phi)\ \d\phi\d t &= \int_\S \rho_\sigma^\mathrm{in}(\phi)\int_0^T \frac{\d}{\d t} w_\sigma(t,T^\sigma_{t,0}[\mu]\phi) \ \d t\d\phi\\
		&= -\int_\S\rho^\mathrm{in}_\sigma(\phi) w_\sigma(0,\phi)\ \d\phi.
	\end{align*}
	This shows that the collection $(\rho_\sigma)_{\sigma\in[M]}$ is a weak solution of the initial value problem \eqref{eq:initialvalue}--\eqref{eq:initialvalueini}.
\end{proof}

\begin{remark}
	By following and adapting the calculations in \cite[Theorem 2.3.6 and Theorem 3.3.4]{Golse2013} one can even show the uniqueness of solutions to the initial value problem \eqref{eq:initialvalue}--\eqref{eq:initialvalueini}.
\end{remark}

\begin{remark}
	In case the initial conditions $\rho_\sigma^\mathrm{in}(\phi)$ and the coupling functions $G_\sigma$ are sufficiently smooth, it follows from standard ODE theorems, that the flow $T_{t,0}^\sigma[\mu](\phi)$ and thus also the densities $\rho_\sigma(t,\phi)$ at later times are smooth with respect to $\phi$.
\end{remark}

%===========================================================================================================================================
\subsubsection{Discrete Initial Measures}
%===========================================================================================================================================

As a further special case of the characteristic system \eqref{eq:SystemOfEquations}--\eqref{eq:coupling}, one is often interested in choosing $\mu^\textrm{in}$ as a discrete measure. In particular, initial measures of the form
\begin{align}\label{eq:discrete_ini_measure}
	\mu^\textrm{in}_\sigma = \frac{1}{N}\sum_{k=1}^{N}\delta_{\phi^\textrm{in}_{\sigma,k}},\qquad \sigma\in [M],
\end{align}
describe the discrete states of $N$ oscillators in each of the $M$ populations.
It can easily be seen, that if functions $\phi_{\sigma,k}\colon \R_{\ge 0} \to \S$ solve the generalized Kuramoto ODE system
\begin{align}\label{eq:discrete_system}
	\dot\phi_{\sigma,k} = \omega_\sigma + \frac{1}{N^{\abs{s^\sigma}}} \sum_{i\in [N]^{\abs{s^\sigma}}}  G_\sigma(\phi_{(s^\sigma_1),i_1},\dots,\phi_{(s^\sigma_{\abs{s^\sigma}}), i_{\abs{s^\sigma}}}, \phi_{\sigma,k})
	\end{align}
with initial condition $\phi_{\sigma,k}(0) = \phi^\mathrm{in}_{\sigma,k}$, then the measures
\begin{align}\label{eq:discrete_measure}
	\mu_\sigma(t) = \frac{1}{N}\sum_{k=1}^{N}\delta_{\phi_{\sigma,k}(t)}
\end{align}
solve the characteristic system~\eqref{eq:SystemOfEquations}--\eqref{eq:coupling}.

%===========================================================================================================================================
\subsubsection{Relation between Absolutely Continuous and Discrete Initial Measures}
%===========================================================================================================================================

In order to find a relation between absolutely continuous initial measures and discrete initial measures in the characteristic system \eqref{eq:SystemOfEquations}--\eqref{eq:coupling} we apply Theorem \ref{thm:CtsDepIni} for the case of $\mu^\mathrm{in}\in \mathcal P(\S)^M$ consisting of discrete measures and the components of~$\nu^\mathrm{in}\in \mathcal P(\S)^M$ being absolutely continuous measures. In fact we even consider a whole family of discrete initial measures given by
\begin{align}\label{eq:initial_cond_N}
	\mu_\sigma^{\textrm{in}, N} = \frac{1}{N}\sum_{k= 1}^{\Omega_{N,\sigma}}\delta_{\phi^{\textrm{in},N}_{\sigma,k}},\qquad \sigma\in [M],
\end{align}
where $N$ parameterizes the family $(\mu_\sigma^{\textrm{in},N})_{N \in\N}$ and $\Omega_{N,\sigma}$ denotes the amount of discrete oscillators in population $\sigma$ for $\mu_\sigma^{\textrm{in},N}$. Suppose that this family of measures converges for each~$\sigma\in[M]$ as $N\rightarrow \infty$ in the Wasserstein-1 distance to measures~$\nu_\sigma^\mathrm{in}$, which can be represented by densities~$\rho^\mathrm{in}_\sigma$. The solution of the original system \eqref{eq:SystemOfEquations} for initial conditions given by \eqref{eq:initial_cond_N} can be obtained by solving the ODE \eqref{eq:discrete_ini_measure}--\eqref{eq:discrete_system}, whereas the solution of this system for initial condition given by the densities $\rho^\mathrm{in}_\sigma$ can be obtained by solving the mean-field limit \eqref{eq:initialvalue}--\eqref{eq:initialvalueini}. The mean-field limit is analytically much easier than systems of the form~\eqref{eq:discrete_system} with large~$N$. Therefore, one typically tries to find results for the mean-field limit \eqref{eq:initialvalue} first. Then, one employs Theorem \ref{thm:CtsDepIni} with $\mu^{\textrm{in},N}_\sigma$ for $\mu^\mathrm{in}$ and $\nu^\mathrm{in}$ being the limit of $\mu^{\textrm{in},N}_\sigma$ as $N\to \infty$. This theorem gives an upper bound of how well the mean-field limit approximates solutions to the finite $N$ system \eqref{eq:discrete_system}. In fact, Theorem \ref{thm:CtsDepIni} tells us that whenever a sequence of discrete initial measures \eqref{eq:initial_cond_N} converges to absolutely continuous initial measures, represented by densities $\rho^\mathrm{in}_\sigma$, then the solutions of system \eqref{eq:discrete_system}--\eqref{eq:discrete_measure} will also converge at a fixed later time $t$ to densities $\rho_\sigma(t,\cdot)$ which solve the initial value problem \eqref{eq:initialvalue}--\eqref{eq:initialvalueini}.

\subsection{Specific coupling functions}
\label{sec:Sinusoid}

Now that we have set up a general framework for multi-population oscillators with higher-order coupling, it is important to stress that the additional terms beyond the classical Kuramoto model, can also induce new dynamical effects. This is one main motivation to keep the theory as general as possible. Yet, to see that new effects occur, it is interesting to consider specific coupling functions, so that the dynamics of the characteristic equation~\eqref{eq:SystemOfEquations} are more restricted.
%\CB{It would be great if we said something about the continuous version of the Watanabe--Strogatz reduction from our more rigorous perspective based on the example.} \Tobi{[TB: see below]}

To illustrate the emergence of new dynamics, we first consider finite networks of~$N$ phase oscillators with sinusoidal coupling. As shown in~\cite{Watanabe1994}, a trajectory of a coupled oscillator system of the form
\begin{align}\label{eq:WatanabeStrogatzSystem}
    \dot \phi_k = f(\phi) + g(\phi) \cos(\phi_k) + h(\phi) \sin(\phi_k), \qquad k = 1,\dots, N
\end{align}
where $\phi_k\in \S$ is the phase of each oscillator and $f,g,h\colon \S^N\to \R$ are functions of $\phi = (\phi_1,\dots,\phi_N)$, does not explore the whole $N$-dimensional phase space, but is rather constrained to a three-dimensional manifold. In particular, when employing the transformation
\begin{align}\label{eq:WatanabeStrogatzTransformation}
    \tan\left(\frac 12 (\phi_k(t) - \Theta(t))\right) = \sqrt{\frac{1+\gamma(t)}{1-\gamma(t)}}\tan\left(\frac 12 (\psi_k - \Psi(t))\right), \quad k = 1,\dots, N,
\end{align}
where $\gamma(t), \Theta(t)$ and $\Psi(t)$ are functions satisfying the $3$-dimensional ODE system
\begin{subequations}
\label{eq:WatanabeStrogatzReducedODE}
\begin{align}
    \dot\gamma &= -(1-\gamma^2)(g(\phi)\sin(\Theta) - h(\phi) \cos(\Theta)),\\
    \gamma \dot \Psi &= -\sqrt{1-\gamma^2}(g(\phi)\cos(\Theta) + h(\phi)\sin(\Theta)),\\
    \gamma \dot \Theta &= \gamma f(\phi) - g(\phi) \cos(\Theta) - h(\phi) \sin(\Theta),
\end{align}
\end{subequations}
the coefficients $\psi_1,\dots,\psi_N$ in~\eqref{eq:WatanabeStrogatzTransformation} are actually constant in time~\cite{Watanabe1994}. This degeneracy is a consequence of the sinusoidal coupling and can be understood in terms of M\"obius group actions~\cite{Marvel2009,Stewart2011}.
Even though~\eqref{eq:WatanabeStrogatzSystem} seems quite restrictive, it is actually contains a large class of systems going beyond the Kuramoto~\cite{Chen2017}; see also~\cite{Bick2018c}.
For example, as one can see by applying trigonometric identities, the system~\eqref{eq:System_Skardal} from the introduction belongs to this class. When converting an initial condition $\phi(0) = (\phi_1(0),\dots,\phi_N(0))$ of~\eqref{eq:WatanabeStrogatzSystem} into initial conditions $\gamma(0), \Psi(0), \Theta(0)$ of~\eqref{eq:WatanabeStrogatzReducedODE} and constants $\psi_1,\dots,\psi_N$, there are~$N$ equations~\eqref{eq:WatanabeStrogatzTransformation}, but~$N+3$ variables to choose. Therefore, three degrees of freedom are left, and one typically chooses the constants $\psi_1,\dotsc,\psi_N$ such that
\begin{align}\label{eq:WatanabeStrogatz_psiRestriction}
    \frac 1N \sum_{k=1}^N \cos(\psi_k) = \frac 1N \sum_{k=1}^N \sin(\psi_k) = \frac{1}{N} \sum_{k=1}^N \psi_k = 0.
\end{align}
It was shown in~\cite{Watanabe1994}, that one can find suitable parameters $\psi_1,\dotsc,\psi_N$ satisfying~\eqref{eq:WatanabeStrogatz_psiRestriction} and~$\gamma(0), \Psi(0)$ and $\Theta(0)$ for almost all initial conditions $\phi(0) = (\phi_1,\dots,\phi_N)$. In particular, the only exception is when there are so called ``majority clusters", when the phases of at least~$N/2$ oscillators coincide.

By imposing the restrictions~\eqref{eq:WatanabeStrogatz_psiRestriction}, one can think of the parameters $\psi_1,\dots,\psi_N$ as the ones foliating the $N$-dimensional phase space into a~$N-3$ dimensional continuum of $3$-dimensional invariant manifolds. The dynamics inside these manifolds is then described by $\gamma(t), \Psi(t)$ and~$\Theta(t)$.

Let us now apply this theory to the system~\eqref{eq:System_Skardal}, that we considered in the introduction, with $K_2 = 0$:
\begin{align}\label{eq:skardal_arenas_motivation}
    \dot \phi_k = \omega + \frac{K_1}{N}\sum_{j=1}^N \sin(\phi_j-\phi_k) + \frac{K_3}{N^3} \sum_{j=1}^N \sum_{l = 1}^N\sum_{m=1}^N \sin(\phi_j + \phi_l - \phi_m - \phi_k)
\end{align}
Here, $k = 1,\dots, N$, $\omega\in \R$ is the intrinsic frequency of all oscillators and $K_1, K_3\in \R$ are coupling constants. By introducing the order parameter 
\begin{align*}
    re^{i\alpha} = \frac{1}{N}\sum_{j=1}^N e^{i\phi_j},
\end{align*}
this system can also be written as
\begin{align}\label{eq:SkardalArenasSystemReformulation}
    \dot \phi_k = \omega + \frac{K_1+r^2 K_3}{2i}\left( r e^{-i\phi_k} - \bar r e^{i\phi_k}\right).
\end{align}
The equations \eqref{eq:WatanabeStrogatzReducedODE}, determining the evolution of $\gamma(t), \Psi(t)$ and $\Theta(t)$, then turn into
\begin{subequations}
\label{eq:WatanabeStrogatzReducedDynamics}
\begin{align}
    \dot \gamma &= (1-\gamma^2)(K_1 + K_3 r^2) \frac 1N \sum_{j=1}^N \frac{\gamma-\cos(\psi_j-\Psi)}{1-\gamma\cos(\psi_j-\Psi)},\\
    \gamma \dot \Psi &= - (1-\gamma^2) (K_1 + K_3 r^2)\frac 1N \sum_{j=1}^N \frac{\sin(\psi_j-\Psi)}{1-\gamma \cos(\psi_j-\Psi)},\\
    \gamma \dot\Theta &= \gamma \omega - (K_1 + K_3 r^2) \frac 1N \sum_{j=1}^N \frac{\sqrt{1-\gamma^2}\sin(\psi_j-\Psi)}{1-\gamma\cos(\psi_j-\Psi)}.
\end{align}
\end{subequations}
Since the order parameter fulfills
\begin{align*}
    r^2 = \frac 1{N^2} \sum_{j,k=1}^N \frac{(\cos(\psi_j-\Psi)-\gamma)(\cos(\psi_k-\Psi)-\gamma) + (1-\gamma^2) \sin(\psi_j-\Psi)\sin(\psi_k-\Psi)}{(1-\gamma\cos(\psi_j-\Psi))(1-\gamma\cos(\psi-\Psi))},
\end{align*}
as one can see by using trigonometric identities (see~\cite[Eq.~(3.3)]{Watanabe1994}), it is a function of~$\gamma$ and~$\Psi$ only. Therefore,~$\Theta$ does not appear on the right-hand side of~\eqref{eq:WatanabeStrogatzReducedDynamics} and consequently, the dynamics of~\eqref{eq:WatanabeStrogatzReducedDynamics} is essentially two dimensional. Analogously to~\cite{Watanabe1994}, we can now define a potential function
\begin{align*}
    \mathcal H (\Psi, \gamma) := \frac 1N \sum_{j=1}^N \log\left(\frac{1-\gamma \cos(\psi_k-\Psi)}{\sqrt{1-\gamma^2}}\right).
\end{align*}
As calculated in~\cite{Watanabe1994}, the partial derivatives of $\mathcal H$ are given by
\begin{align*}
    \frac{\partial \mathcal H}{\partial \Psi} = -\frac 1N \sum_{k=1}^N \frac{\gamma \sin(\psi_k - \Psi)}{1-\gamma \cos(\psi_k-\Psi)}, \qquad
    \frac{\partial \mathcal H}{\partial \gamma} = \frac{1}{N(1-\gamma^2)}\sum_{k=1}^N \frac{\gamma-\cos(\psi_k-\Psi)}{1-\gamma\cos(\psi_k-\Psi)}.
\end{align*}
This allows us to rewrite the $(\gamma, \Psi)$-dynamics of~\eqref{eq:WatanabeStrogatzReducedDynamics} in terms of partial derivatives of $\mathcal H$:
\begin{align*}
    \dot\gamma &= (1-\gamma^2)^2\frac{\partial \mathcal H}{\partial \gamma} (K_1 + K_3 r^2)\\
    \dot \Psi &= \frac{1-\gamma^2}{\gamma^2}\frac{\partial \mathcal H}{\partial \Psi} (K_1 + K_3 r^2).
\end{align*}
Using this, we can now calculate
\begin{align*}
    \dot{\mathcal H} &= \frac{\partial \mathcal H}{\partial \gamma} \dot\gamma + \frac{\partial \mathcal H}{\partial \Psi} \dot \Psi\\
    &=(1-\gamma^2)\left[ (1-\gamma^2)\left( \frac{\partial \mathcal H}{\partial \gamma}\right) ^2 + \frac{1}{\gamma^2}\left( \frac{\partial \mathcal H}{\partial \Psi}\right)^2 \right] (K_1 + K_3 r^2).
\end{align*}
However, since
\begin{align*}
    (1-\gamma^2)\left[ (1-\gamma^2)\left( \frac{\partial \mathcal H}{\partial \gamma}\right) ^2 + \frac{1}{\gamma^2}\left( \frac{\partial \mathcal H}{\partial \Psi}\right)^2 \right] = r^2,
\end{align*}
as calculated in~\cite{Watanabe1994}, we obtain $\dot{\mathcal H} = r^2(K_1 + K_3 r^2)$.
At this point it is important to understand the meaning of~$\mathcal H$. As explained in~\cite{Watanabe1994}, on the one hand
\begin{align}\label{eq:H_interpret1}
    r \to 1 \Leftrightarrow \gamma \to 1 \Leftrightarrow \mathcal H \to \infty
\end{align}
and on the other hand
\begin{align}\label{eq:H_interpret2}
    r = 0 \Leftrightarrow \gamma = 0 \Leftrightarrow \mathcal H = 0.
\end{align}
Consequently, whenever $K_1 + K_3 r^2>0$ and $r>0$, $\mathcal H$ increases until either $r = 1$ or $K_1 + K_3 r^2 = 0$ in the limit. Conversely, if $K_1 + K_3 r^2<0$ and $r>0$, $\mathcal H$ decreases until $r=0$ or $K_1 + K_3 r^2 = 0$.

\begin{example}\label{exam:WatanabeStrogatzExample}
Let $K_1 = 1, K_3 = -4$ and the initial condition~$\phi$ be given such that there are no majority clusters and $0<r<1/2$. $\mathcal H$ is stationary only if $r=0$ or $r = \sqrt{-K_1/K_3} = 1/2$. Otherwise~$\mathcal H$ is increasing. Thus, there are only two possibilities: Firstly, $\mathcal H$ can diverge to infinity or secondly $\lim_{t\to \infty} \mathcal H$ exists in~$\R$. In the first case, $r\to 1$, by~\eqref{eq:H_interpret1} and consequently~$r$ has to achieve the value~$1/2$ at an intermediary time. This, however, cannot happen, since at $r=1/2$, all oscillators of~\eqref{eq:SkardalArenasSystemReformulation} are only rotating around the circle with a common frequency. So $r=1/2$ is invariant and thus $r\to 1$ is impossible. Consequently $\lim_{t\to \infty}\mathcal H$ exists in~$\R$, which in turn can only happen if $r\to 1/2$. To conclude this example, almost all initial conditions display $r=1/2$ in the limit as $t\to\infty$, and the oscillators consequently partially synchronize. This occurs even though the intrinsic frequency~$\omega$ in~\eqref{eq:skardal_arenas_motivation} does not depend on~$k$ as it is typically the case for the classical Kuramoto model when one speaks of partial synchronization. Moreover, this observation is in line with the emergence of new stationary phase configurations when considering ``nonlinear coupling'' through the order parameter~\cite{Chen2017} beyond the Kuramoto model.
\end{example}

While the above analysis applies only to discrete initial measures, a similar analysis can be conducted when the initial measure has a density $\rho^\text{in}(\phi)$. In this case, the evolution of~$\rho$ is governed by the continuity equation. Then, there are no constants $\psi_1,\dots,\psi_N$, but there is a density $\chi(\psi)$ on the circle that does not depend on time. The transformation to get from~$\phi$ to $\psi$ is exactly the same as described in~\eqref{eq:WatanabeStrogatzTransformation} and the requirement for the density~$\chi(\psi)$ to be independent of time is given by~\eqref{eq:WatanabeStrogatzReducedODE}, as well. Since the initial density $\rho$ cannot have any majority clusters, it is always possible to find initial conditions for $\gamma, \Psi, \Theta$ such that the density $\chi$ satisfies
\begin{align*}
    \int_\S \cos(\psi)\chi(\psi)\ \d\psi = \int_\S \sin(\psi)\chi(\psi)\ \d\psi  = \int_\S \psi\ \chi(\psi)\ \d\psi = 0.
\end{align*}
In general, sums that depend on $(\psi_k)_{k=1,\dots,N}$ and appear when dealing with discrete measures have to be replaced by integrals with respect to the density $\chi(\phi)$ in the absolutely continuous case. Therefore, the potential function $\mathcal H$ reads as
\begin{align*}
    \mathcal H (\Psi, \gamma) = \int_\S \log\left( \frac{1-\gamma \cos(\phi-\Psi)}{\sqrt{1-\gamma^2}}\right) \chi(\psi) \ \d\psi
\end{align*}
and $\dot{\mathcal H} = r^2(K_1 + K_3 r^2)$ still holds true. Consequently, the explanation in Example~\ref{exam:WatanabeStrogatzExample} can be adopted to absolutely continuous measures.

As we have seen in the previous examples, the order parameter can give insights about the degree of synchrony in an oscillator system. The dynamics of this order parameter has interesting bifurcations at $r=0$ and $r=1$ but there can also be further equilibria of $r\in (0,1)$. For instance, in Example~\ref{exam:WatanabeStrogatzExample}, $r=0$ and $r=1$ were repelling equilibria whereas $r=1/2$ was attracting. It is therefore interesting under which conditions certain invariant states of an oscillator system are stable or unstable. Since $r=1$ in each population is the only invariant state of the general characteristic oscillator system~\eqref{eq:SystemOfEquations} and under minor assumptions, $r=0$ includes a general invariant state, as well, the next section is devoted to a stability analysis of these two states.

%===========================================================================================================================================
%===========================================================================================================================================
\section{Synchrony and Synchrony Patterns and their Stability}\label{sec:dynamics}
%===========================================================================================================================================
%===========================================================================================================================================

Of course, instead of working with the simpler mean-field limit, or with a particular finite-dimensional Kuramoto model, an alternative route is to directly study the characteristic system. This system links finite-dimensional oscillator systems to the mean-field. In particular, it contains the full information about both systems, so we start our dynamical analysis here using the characteristic system.

From now on we assume the multi-indices~$s^\sigma$ to have the form
\begin{align}\label{eq:multiindex_specialform}
	s^\sigma = (a^\sigma_1,a^\sigma_1, a^\sigma_2, a^\sigma_2, \dots, a^\sigma_{L_\sigma}, a^\sigma_{L_\sigma}, \sigma)
\end{align}
for some $L_\sigma\in \N$ and $a^\sigma_j\in [M]$, i.e., $\abs{s^\sigma} = 2L_\sigma +1$. Further, the coupling functions $G_\sigma\colon \S^{\abs{s^\sigma}}\times\S\to \R$ are supposed to be of the form $G_\sigma(\alpha,\phi) = g_\sigma(\alpha_1-\alpha_2,\dots, \alpha_{2L_\sigma-1}-\alpha_{2L_\sigma}, \alpha_{2L_\sigma+1}-\phi)$, for functions $g_\sigma\colon \S^{L_\sigma}\times\S\to \R$. These conditions can be summarized by requiring the velocity field $\mathcal K_\sigma\mu$ to be given by
\begin{align}\label{eq:coupling_differences}
	(\mathcal K_\sigma\mu)(\phi) = \omega_\sigma + \int_\S \int_{\S^{\abs{r^\sigma}}} \int_{\S^{\abs{r^\sigma}}} g_\sigma(\alpha-\beta, \gamma-\phi) \ \d\mu^{(r^\sigma)}(\alpha)\d\mu^{(r^\sigma)}(\beta)\d\mu_\sigma(\gamma),
\end{align}
for new multi-indices $r^\sigma = (a^\sigma_1,\dots, a^\sigma_{L^\sigma})$ with $\abs{r^\sigma} = L_\sigma$ and coupling functions $g_\sigma\colon \S^{\abs{r^\sigma}}\times\S\to\R$. In the remainder of this paper we study the system \eqref{eq:SystemOfEquations} with \eqref{eq:coupling_differences}.

% CB: Use these shorthands:
\newcommand{\Sp}{\mathrm{S}}
\newcommand{\Dp}{\mathrm{D}}

\newcommand{\rset}[2]{\left\lbrace\, #1\,\left|\;#2\right.\right\rbrace}
\newcommand{\lset}[2]{\left\lbrace\left. #1\;\right|\,#2\,\right\rbrace}
\newcommand{\set}[2]{\rset{#1}{#2}}
\newcommand{\tset}[2]{\big\lbrace #1\,\big|\;#2\big\rbrace}
\newcommand{\sset}[1]{\left\lbrace #1\right\rbrace}
\newcommand{\tsset}[1]{\big\lbrace #1\big\rbrace}

\begin{definition}
We write $\Dp = \sset{\frac{1}{2\pi}\lambda_\S}$, where~$\lambda_\S$ is the Hausdorff measure on~$\S$, and $\Sp = \set{\delta_\xi}{\xi\in\S}$. Population~$\sigma\in[M]$ is in \emph{splay phase}~$\Dp$ if $\mu_\sigma \in\Dp$ and \emph{phase synchronized} if $\mu_\sigma \in\Sp$.
\end{definition}

We adopt the notation in~\cite{Bick2019a} and write~$\Dp$ if a population is in splay configuration and~$\Sp$ if it is phase synchronized.
That is, we write
\begin{subequations}\label{eq:SyncSplay}
\begin{align}
\mu_1\dotsb\mu_{\sigma-1}\Sp\mu_{\sigma+1}\dotsb\mu_{M} &= \lset{\mu\in\mathcal{P}(\S)^N}{\mu_\sigma\in\Sp},\\
\mu_1\dotsb\mu_{\sigma-1}\Dp\mu_{\sigma+1}\dotsb\mu_{M} &= \lset{\mu\in\mathcal{P}(\S)^N}{\mu_\sigma\in\Dp}
\end{align}
\end{subequations}
to indicate that population~$\sigma$ is fully phase synchronized or in splay phase. We extend the notation to intersections of the sets~\eqref{eq:SyncSplay}. Consequently, $\Sp\dotsb\Sp$ ($M$~times) is the set of cluster states where all populations are fully phase synchronized and $\Dp\dotsb\Dp$ the set where all populations are in splay phase.

%===========================================================================================================================================
\subsection{Invariant Subspaces}
%===========================================================================================================================================

\begin{proposition}[Reducibility to lower dimensions]\label{prop:reducibility}
	If we fix~$m$ populations, each to be in splay or in synchronized state, the other $M-m$ populations behave accordingly to \eqref{eq:SystemOfEquations},\eqref{eq:coupling_differences} with $M-m$ instead of $M$ and different coupling functions.
\end{proposition}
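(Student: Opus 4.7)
The plan is to verify invariance of splay and synchronized states for the fixed populations and then reduce \eqref{eq:coupling_differences} for each remaining population by carrying out the inner integrals against the prescribed measures.

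First, I would verify the invariance slot by slot. For the sync case, if $\mu_\sigma(0)=\delta_\xi$, then \eqref{eq:SystemOfEquations_PushForward} gives $\mu_\sigma(t) = \Phi_\sigma(t,\cdot,\mu^{\mathrm{in}})\#\delta_\xi = \delta_{\Phi_\sigma(t,\xi,\mu^{\mathrm{in}})}$, which stays in $\Sp$. For the splay case, if $\mu_\sigma = \frac{1}{2\pi}\lambda_\S$, then the change of variables $\gamma' = \gamma-\phi$ together with translation invariance of $\lambda_\S$ applied to the innermost integral in \eqref{eq:coupling_differences} removes the $\phi$-dependence, so $(\mathcal K_\sigma \mu)(\phi)$ is constant in~$\phi$; the induced flow $T^\sigma_{t,0}[\mu]$ is then a rigid rotation, which preserves $\lambda_\S$. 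By uniqueness (Theorem \ref{thm:ExistenceMeasure}), these self-consistent ansätze are the unique solutions in those slots.

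Next, I would compute the effective velocity field $\mathcal K_\tau\mu$ for each of the $M-m$ free populations~$\tau$. Let $F_\tau = \{k : r^\tau_k \text{ corresponds to a fixed population}\}$. Because $\mu^{(r^\tau)}$ is a product measure $\bigotimes_j \mu_{r^\tau_j}$ and $g_\tau$ depends on $(\alpha,\beta)$ only through the componentwise differences $\alpha_j-\beta_j$, I integrate out the pair $(\alpha_k,\beta_k)$ at each $k\in F_\tau$. If $r^\tau_k$ is a sync population with $\mu_{r^{\tau}_{k}}(t) = \delta_{\phi_{r^{\tau}_{k}}(t)}$, then both $\alpha_k$ and $\beta_k$ collapse to $\phi_{r^{\tau}_{k}}(t)$, so the $k$-th difference vanishes and the drifting phase cancels out. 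If $r^\tau_k$ is a splay population, then
\begin{equation*}
\int_{\S^2} g_\tau(\dots,\alpha_k-\beta_k,\dots)\, \frac{\d\alpha_k\, \d\beta_k}{(2\pi)^2}
= \frac{1}{2\pi}\int_\S g_\tau(\dots,\xi,\dots)\, \d\xi
\end{equation*}
by the substitution $\xi=\alpha_k-\beta_k$ and translation invariance, so the $k$-th difference argument is averaged out. Iterating over $k\in F_\tau$ produces a new coupling function $\tilde g_\tau$ of $L_\tau - \abs{F_\tau}$ difference arguments and a reduced multi-index $\tilde r^\tau$ on the label set of the free populations; $\tilde g_\tau$ inherits the Lipschitz constant of $g_\tau$ since both evaluation at $0$ and averaging preserve Lipschitz constants.

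Assembling everything, \eqref{eq:coupling_differences} for $\tau$ becomes
\begin{equation*}
(\mathcal K_\tau\mu)(\phi) = \omega_\tau + \int_\S \int_{\S^{\abs{\tilde r^\tau}}} \int_{\S^{\abs{\tilde r^\tau}}} \tilde g_\tau(\alpha-\beta,\gamma-\phi)\, \d\tilde\mu^{(\tilde r^\tau)}(\alpha)\, \d\tilde\mu^{(\tilde r^\tau)}(\beta)\, \d\mu_\tau(\gamma),
\end{equation*}
where $\tilde\mu$ collects the free measures. This is precisely \eqref{eq:SystemOfEquations}, \eqref{eq:coupling_differences} with $M-m$ populations and new coupling data $(\omega_\tau,\tilde r^\tau,\tilde g_\tau)$. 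The main conceptual obstacle is the sync case: a priori one might worry that a synchronized population with a drifting phase $\phi_{r^{\tau}_{k}}(t)$ would impose a time-dependent forcing on the reduced system, breaking the autonomous form \eqref{eq:coupling_differences}. That obstruction is resolved by the interplay of the difference structure of $g_\tau$ with the product structure of $\mu^{(r^\tau)}$: each sync-indexed component appears in both the $\alpha$- and $\beta$-product, so the phase enters only through the trivial difference $\alpha_k-\beta_k=0$ and cancels, leaving a genuinely autonomous reduced system.
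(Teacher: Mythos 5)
Your proposal is correct and follows essentially the same route as the paper: the fixed-population slots of the difference arguments are evaluated at $0$ (synchronized case, where the drifting phase cancels because each such index appears in both the $\alpha$- and $\beta$-product) or averaged over the circle (splay case), yielding reduced coupling functions $\hat g_\sigma(\alpha,\phi)=g_\sigma(\alpha,0^{\chi^\sigma},\phi)$ respectively $\hat g_\sigma(\alpha,\phi)=\frac{1}{(2\pi)^{\chi^\sigma}}\int_{\S^{\chi^\sigma}}g_\sigma(\alpha,\beta,\phi)\,\d\beta$, exactly as in the paper's proof. The only difference is presentational: you verify invariance of the fixed populations within the same argument (the paper defers this to Proposition~\ref{prop:invariant_subspaces}) and you treat all fixed populations at once, whereas the paper reduces to $m=1$ and iterates.
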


\begin{proof}
	Without loss of generality we can assume that $m=1$, since the case for general~$m$ then follows by repeatedly applying this proposition. After reindexing populations, we can also assume the $M$th population to be fixed in synchronized or splay state. For a convenience of notation we suppose the multi-indices $r^\sigma$ to be sorted in ascending order, which can easily be achieved by changing the order of integration. Let us now denote $\chi^\sigma =  \abs{\{ i:r^\sigma_i=M\}}$ and write $v^\sigma\in [M]^{p^\sigma}$ with $p^\sigma = \abs{r^\sigma} - \chi^\sigma$ for the multi-index having the same entries as $r^\sigma$ except that the last $\chi^\sigma$ entries, i.e., all entries valued $M$, are missing.
	In case we fix the $M$th population to be synchronized, the other $M-1$ populations yield a system of the form~\eqref{eq:SystemOfEquations},\eqref{eq:coupling_differences} with coupling functions
	\begin{align*}
		\hat g_\sigma(\alpha,\phi) := g_\sigma(\alpha,0^{\chi^\sigma},\phi).
	\end{align*}
	Similarly, if the $M$th population is in splay state, the other $M-1$ populations move according to the coupling functions
	\begin{align*}
		\hat g_\sigma(\alpha,\phi) = \frac{1}{(2\pi)^{\chi^\sigma}}\int_{\S^{\chi^\sigma}}g_\sigma(\alpha,\beta, \phi) \d\beta,
	\end{align*}
	with $\alpha \in \S^{\abs{v^\sigma}}$.	Here, $g_\sigma\colon \S^{\abs{r^\sigma}}\times \S\to\R$ are considered as functions mapping from $\S^{p^\sigma}\times \S^{\chi^\sigma}\times\S$ to $\R$.
\end{proof}

\begin{proposition}\label{prop:invariant_subspaces}
	Subsets of the form \eqref{eq:SyncSplay} are invariant under the flow of \eqref{eq:SystemOfEquations},\eqref{eq:coupling_differences}.
\end{proposition}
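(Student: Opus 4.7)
The plan is to leverage the uniqueness of the fixed point of the operator $A$ from the proof of Theorem~\ref{thm:ExistenceMeasure}. An intersection of sets of the form~\eqref{eq:SyncSplay} is specified by disjoint subsets $S, D \subseteq [M]$ with $\mu^\mathrm{in}_\tau \in \Sp$ for $\tau \in S$ and $\mu^\mathrm{in}_\sigma \in \Dp$ for $\sigma \in D$. I would introduce
\begin{align*}
	X := \{\mu \in C^M_{\mathcal P(\S)} : \mu_\tau(t) \in \Sp\ \forall\tau \in S,\ \mu_\sigma(t) \in \Dp\ \forall\sigma \in D,\ t \in [0,T]\}
\end{align*}
and show that (i)~$X$ is a nonempty closed subset of $(C^M_{\mathcal P(\S)}, d_\alpha)$, and (ii)~$A(X) \subseteq X$. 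Closedness is essentially free: $\Dp$ is a singleton, and $\Sp = \{\delta_\xi : \xi\in\S\}$ is the image of the compact~$\S$ under the $W_1$-continuous map $\xi\mapsto\delta_\xi$, hence closed in $(\mathcal P(\S), W_1)$; since $d_\alpha$-convergence implies pointwise $W_1$-convergence, the constraints are preserved in the limit.

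For the synchronized components the inclusion $A(X) \subseteq X$ is immediate: if $\mu^\mathrm{in}_\tau = \delta_{\xi_\tau}$, the push-forward $(A\mu)_\tau(t) = T_{t,0}^\tau[\mu]\#\delta_{\xi_\tau} = \delta_{T_{t,0}^\tau[\mu]\xi_\tau}$ stays in $\Sp$. The heart of the proof is the splay case. Fix $\sigma \in D$ and $\mu\in X$, so $\mu_\sigma(t) = \tfrac{1}{2\pi}\lambda_\S$ for all $t$. The key observation is that in~\eqref{eq:coupling_differences} the variable~$\phi$ enters only through the argument $\gamma-\phi$ of $g_\sigma$, integrated against $\d\mu_\sigma(\gamma) = \tfrac{1}{2\pi}\d\gamma$. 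The substitution $\tilde\gamma := \gamma-\phi$, justified by translation invariance of the uniform measure, eliminates~$\phi$, so $(\mathcal K_\sigma\mu(t))(\phi)$ is independent of~$\phi$. Consequently $T_{t,0}^\sigma[\mu]$ is a rigid rotation of the circle, and $(A\mu)_\sigma(t) = T_{t,0}^\sigma[\mu]\#\tfrac{1}{2\pi}\lambda_\S = \tfrac{1}{2\pi}\lambda_\S \in \Dp$.

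Since $A$ is a contraction on $(C^M_{\mathcal P(\S)}, d_\alpha)$ for $\alpha$ sufficiently large (Theorem~\ref{thm:ExistenceMeasure}), it restricts to a contraction on the complete subset~$X$. Banach's fixed-point theorem yields a unique fixed point in~$X$, which by global uniqueness coincides with the solution of~\eqref{eq:SystemOfEquations},\eqref{eq:coupling_differences}. Hence the solution lies in~$X$, proving invariance. The main obstacle is recognising the splay reduction: once one sees that the $\gamma$-integration over the uniform measure absorbs all $\phi$-dependence in~$\mathcal K_\sigma$, the rest is a clean application of the fixed-point machinery already in place.
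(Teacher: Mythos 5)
Your proof is correct, but it is organised differently from the paper's. The paper argues one constrained population at a time (WLOG the first): the synchronized case is the same Dirac push-forward observation you make, while for the splay case it freezes $\mu_1\equiv\frac{1}{2\pi}\lambda_\S$, invokes Proposition~\ref{prop:reducibility} to get a reduced $(M-1)$-population system, solves that by Theorem~\ref{thm:ExistenceMeasure}, and then verifies that the assembled tuple solves the full system because $(\mathcal K_1\mu(t))(\phi)$ is constant in $\phi$ and constant velocity fields generate rotations, which fix $\Dp$; uniqueness then gives invariance, and intersections are handled by a separate remark. You instead bypass the reducibility step entirely and rerun the contraction machinery of Theorem~\ref{thm:ExistenceMeasure} on the closed subset $X$ of constrained curves, showing $A(X)\subseteq X$ and concluding via uniqueness of the fixed point. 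The two key computations are identical in both arguments (Dirac push-forward stays Dirac; translation invariance of the uniform measure kills the $\phi$-dependence in $\mathcal K_\sigma$, which is exactly where the special structure of \eqref{eq:coupling_differences} enters), so the difference is in the scaffolding. Your route buys a cleaner treatment of intersections, since arbitrary combinations of $\Sp$- and $\Dp$-constraints are handled in one sweep without iterating a WLOG reduction, at the cost of re-entering the fixed-point proof; the paper's route is more constructive and produces the reduced coupling functions of Proposition~\ref{prop:reducibility} explicitly, which the paper reuses later (e.g.\ for the stability analysis of $\Sp\Dp\Dp$ in Section~\ref{sec:ExampleBick}). Two trivial points you should state explicitly: $X\neq\emptyset$ (the constant-in-time curve $\mu(t)\equiv\mu^\mathrm{in}$ lies in $X$), which Banach's theorem needs, and the observation that Theorem~\ref{thm:ExistenceMeasure} applies to \eqref{eq:coupling_differences} because it is a special case of \eqref{eq:coupling}.
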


\begin{proof}
	Without loss of generality, we only consider $\Sp\mu_2\cdots\mu_M$ and $\Dp\mu_2\cdots\mu_M$. Suppose, that $\mu^\mathrm{in}_1 = \delta_\xi$ for some $\xi\in\S$. Then, by \eqref{eq:SystemOfEquations_PushForward}, $\mu_1(t) = \Phi_1(t,\cdot,\mu^\mathrm{in})\#\delta_\xi = \delta_{\Phi_1(t,\xi,\mu^\mathrm{in})}$, so the first population always stays in a synchronized state. That proves invariance of $\Sp\mu_2\cdots\mu_M$. Proving invariance of $\Dp\mu_2\cdots\mu_M$ is a bit more involved. We start with setting $\mu_1(t) = \frac{1}{2\pi}\lambda_\S$ in \eqref{eq:SystemOfEquations} by applying Proposition \ref{prop:reducibility} and thus reducing the system by one population. Theorem \ref{thm:ExistenceMeasure} gives the existence and uniqueness of measures $\mu_2(t),\dots,\mu_M(t)$, which solve the reduced system. To see, that $\mu_1(t),\dots,\mu_M(t)$ is a solution of the unreduced system, we calculate
	\begin{align*}
		(\mathcal K_1\mu(t))(\phi)&=\omega_1 + \int_{\S^{\abs{r^\sigma}}}\int_{\S^{\abs{r^\sigma}}} \int_\S g_\sigma(\alpha-\beta,\gamma-\phi)\ \d\mu_1(\gamma)\d\mu^{(r^\sigma)}(\alpha)\d\mu^{(r^\sigma)}(\beta)\\
		&=\omega_1 + \int_{\S^{\abs{r^\sigma}}}\int_{\S^{\abs{r^\sigma}}}\frac{1}{2\pi}\int_\S g_\sigma(\alpha-\beta,\gamma-\phi)\ \d\gamma \d\mu^{(r^\sigma)}(\alpha)\d\mu^{(r^\sigma)}(\beta)\\
		&=\omega_1 + \int_{\S^{\abs{r^\sigma}}}\int_{\S^{\abs{r^\sigma}}}\frac{1}{2\pi}\int_\S g_\sigma(\alpha-\beta,\gamma)\ \d\gamma \d\mu^{(r^\sigma)}(\alpha)\d\mu^{(r^\sigma)}(\beta),
	\end{align*}
	where the last equality was based on a phase shift $\gamma-\phi\mapsto\gamma$. Therefore, the velocity field $(\mathcal K_1\mu(t))(\phi)$ is actually independent of $\phi$. As~$\Dp$ is invariant under rotations, i.e., those operations generated by constant velocity fields, $\mu_1(t),\dots,\mu_M(t)$ is a solution of the full system, which shows invariance of $\Dp\mu_2\cdots\mu_M$.
\end{proof}

\begin{remark}
	Because intersections of invariant sets are again invariant, any combination of $\Sp,\Dp$ and $(\mu_\sigma)_{\sigma\in[M]}$ is also invariant.
\end{remark}

\subsection{Stability of the All-Synchronized State}

As we have seen, the state in which one population is synchronized, is invariant. However, as the synchronized state is not really a single point, but rather a whole set of phase configurations
\begin{align*}
	\Sp = \{ \delta_\xi: \xi\in \S\},
\end{align*}

it makes more sense to study the stability of this set. Similarly, for the multi-population model, it makes sense to analyze the stability of the set
\begin{align*}
	\Sp^M := \Sp\cdots\Sp = \{ \mu \in \mathcal P(\S)^M: \mu=(\delta_{\xi_1},\dots,\delta_{\xi_M}), \xi_1,\dots,\xi_M\in \S\}.
\end{align*}

\begin{notation}
For an ease of notation, we will write 
\begin{itemize}
	\item $W_1(\Sp, \mu) = \inf_{\delta\in \Sp} W_1(\delta, \mu)$, for $\mu\in \mathcal P(\S)$,
	\item $\mu(t)\to\Sp$ as $t\to \infty$ if $\lim_{t\to\infty} W_1(\Sp,\mu(t))=0$, for $\mu \in C_{\mathcal P(\S)} $,
	\item $\mathbb B(\Sp, \epsilon) = \bigcup_{\delta\in\Sp} \mathbb B(\delta, \epsilon)$,
\end{itemize}
where $\B(\delta, \epsilon)$ denotes the ball centered at a measure $\delta$ of radius $\epsilon$ in the Wasserstein-1 metric.
\end{notation}

The following definition is the natural variant of (Lyapunov) stability in our setting:

\begin{definition}\label{def:MPopStability}
	The set $\Sp^M$ is \emph{stable} if for all $\sigma \in [M]$ and all neighborhoods $U_\sigma \subset \mathcal P(\S)$ of $\Sp$ there exist neighborhoods $V_\sigma$ of $\Sp$ such that for any $\mu^\mathrm{in} = (\mu_1^\mathrm{in},\dots,\mu_M^\mathrm{in})\in V_1\times\dots\times V_M$, the solution $\mu(t)$ of \eqref{eq:SystemOfEquations},\eqref{eq:coupling_differences} satisfies $\mu(t)\in U_1\times\dots\times U_M$ for all $t\ge 0$.
\end{definition}

Of course, one often does not only want to show stability of solutions staying near an invariant set but also that the solutions tend towards the invariant set:

\begin{definition}\label{def:MPopAsymStability}
	The set $\Sp^M$ is \emph{asymptotically stable} if it is stable and, additionally, there exists a neighborhood $V = V_1\times\dots\times V_M\subset \mathcal P(\S)^M$ such that for all $\mu^\mathrm{in}\in V$ the solution of \eqref{eq:SystemOfEquations},\eqref{eq:coupling_differences} satisfies $\mu_\sigma(t) \to \Sp$ as $t\to \infty$ for all $\sigma \in[M]$.
\end{definition}

\begin{remark}
	The two Definitions \ref{def:MPopStability} and \ref{def:MPopAsymStability} are formulated in terms of the topology created by the distance $\max_{\sigma \in[M]} W_1(\Sp, \mu_\sigma)$. However, instead of taking the maximum, one can also sum over $W_1(\Sp, \mu_\sigma)$, to get a definition that resembles the metric used to prove existence and uniqueness of \eqref{eq:SystemOfEquations} more closely. However, as these topologies are equivalent, we use Definitions \ref{def:MPopStability}-\ref{def:MPopAsymStability} in the topology generated by $\max_{\sigma \in[M]} W_1(\Sp, \mu_\sigma)$, as this setting seems easier to work with.
\end{remark}

\begin{remark}
    The two Definitions \ref{def:MPopStability} and \ref{def:MPopAsymStability} also make sense when considered with the more general coupling \eqref{eq:coupling}. However, we only work with them in the context of the velocity fields \eqref{eq:coupling_differences}.
\end{remark}

\subsubsection{No Generic Asymptotic Stability}\label{sec:NoGenericAsymptoticStability}

This section aims to illustrate that the all-synchronized state can not be asymptotically stable under a generic condition, which is in this case $\frac{\partial}{\partial\gamma}g_\sigma(0,\gamma)\neq 0$ for at least one $\sigma\in[M]$. Assume that we do not have generic asymptotic stability for $M=1$. Generalizing this to $M$ populations can then be easily done by applying the results for one population to an invariant subset of the form $\Sp\cdots\Sp\mu_\sigma\Sp\cdots\Sp$, with $\sigma\in [M]$ chosen such that $\frac{\partial}{\partial\gamma}g_\sigma(0,\gamma)\neq 0$. No asymptotic stability in this subset with one free population then yields no asymptotic stability in the whole system with $M$ free populations.

Thus, we only consider the case $M=1$, so we assume $\frac{\partial}{\partial\gamma}g_1(0,\gamma)\neq 0$. The strategy of the proof is to construct a sequence of steady states converging to the synchronized state. Along this family, no asymptotic convergence can take place. To accomplish this construction, consider a perturbation of the synchronized state of the form 
\begin{align}\label{eq:asymptotic_initial_measure}
	\mu_1^\mathrm{in} = \left(1-\frac{1}{n}\right)\delta_{\phi_1^\mathrm{in}} + \frac{1}{n}\delta_{\phi_2^\mathrm{in}},
\end{align}
for $\phi_1^\mathrm{in},\phi_2^\mathrm{in},\in\S$ and $n\in \N$. Now, $\mu(t):=\mu_1(t)$ obeys the equations
\begin{align*}
	\partial_t\Phi(t,\xi,\mu^\mathrm{in}) &= (\mathcal K\mu(t))(\Phi(t,\xi,\mu^\mathrm{in}))\\
	\mu(t) &= \Phi(t,\cdot,\mu^\mathrm{in})\#\mu^\mathrm{in}\\
	\Phi(0,\xi,\mu^\mathrm{in}) &= \xi,
\end{align*}
with
\begin{align*}
	(\mathcal K\mu)(\phi) = \omega_1 + \int_\S \int_{\S^{\chi}}\int_{\S^{\chi}}  g_1(\alpha-\beta,\gamma-\phi) \ \d\mu^{\chi}(\alpha)\d\mu^{\chi}(\beta)\d\mu(\gamma),
\end{align*}
and $\chi = \abs{r^1}$.

As the push-forward of $\mu^\mathrm{in}$, specifically the convex combination of two dirac distributions, is again a convex combination of two diracs, the solution $\mu(t)$ is given by $\mu(t) = \left(1-\frac{1}{n}\right) \delta_{\phi_1(t)} + \frac{1}{n}\delta_{\phi_2(t)}$ for $\phi_1(t) = \Phi(t,\phi_1^\mathrm{in},\mu^\mathrm{in})$ and $\phi_2(t) = \Phi(t,\phi_2^\mathrm{in},\mu^\mathrm{in})$. Their difference $\Psi(t):=\phi_2(t)-\phi_1(t)$ satisfies the differential equation
\begin{align*}
	\dot\Psi(t) &= \dot\phi_2(t)-\dot\phi_1(t)\\
	&=\int_\S \int_{\S^\chi} \int_{\S^\chi} g_1(\alpha-\beta,\gamma-\phi_2(t))- g_1(\alpha-\beta,\gamma-\phi_1(t)) \ \d\mu^\chi(\alpha)\d\mu^\chi(\beta)\d\mu(\gamma)\\
	&=  g_1(\phi_1(t)-\phi_1(t),\phi_1(t)-\phi_2(t)) - g_1(\phi_1(t)-\phi_1(t),\phi_1(t)-\phi_1(t)) + \mathcal O\left( \frac{1}{n}\right)\\
	&= g_1(0,-\Psi(t)) - g_1(0,0) + \mathcal O\left(\frac{1}{n}\right) =: f_n(\Psi).
\end{align*}
Obviously, $\mu^\mathrm{in}\in\Sp$ if and only if $\phi_1^\mathrm{in} = \phi_2^\mathrm{in}$ and further, $\mu(t)\to\Sp$ if and only if $\snorm{\Psi}\to 0$. However, as we have assumed $\frac{\partial}{\partial \gamma}g_1(0,\gamma)\neq 0$, for each large enough $n$, there exist $\Psi^0_n$ with $\snorm{\Psi^0_n}>0$ such that $f_n(\Psi^0_n) = 0$. Consequently, if $\Psi(0)=\Psi^0_n$, $\Psi(t)=\Psi^0_n$ for all $t\ge 0$.

Now suppose for contradiction that $\Sp$ was asymptotically stable. Then, according to Definition \ref{def:MPopAsymStability}, there must exist a neighborhood $V$ of $\Sp$ such that for all $\mu^\mathrm{in}\in V$, the solution of \eqref{eq:SystemOfEquations},\eqref{eq:coupling_differences} satisfies $\mu(t)\to \Sp$ as $t\to \infty$. Then, there also has to exist $\epsilon_V>0$ with $\B(\Sp,\epsilon_V)\subset V$. On the one hand, choosing $n$ such that $\frac{\pi}{n}<\epsilon_V$ and the initial measure according to \eqref{eq:asymptotic_initial_measure} with $\phi_2^\mathrm{in}-\phi_1^\mathrm{in} = \Psi_n^0$ yields 
\begin{align*}
	W_1(\Sp,\mu^\mathrm{in})\le W_1\left(\delta_{\phi_1^\mathrm{in}}, \left(1-\frac{1}{n}\right)\delta_{\phi_1^\mathrm{in}} + \frac{1}n \delta_{\phi_2^\mathrm{in}}\right) = \frac{1}{n}\snorm{\Psi^0_n}\le \frac{\pi}{n}<\epsilon_V,
\end{align*}
so $\mu^\mathrm{in}\in V$. On the other hand $\phi_2(t)-\phi_1(t) = \Psi_n^0$ for all $t\ge 0$ and thus $\mu(t)$ does not converge to $\Sp$, which contradicts asymptotic stability.

\subsubsection{Stability}

Next, we are going to show that large classes of generic systems do admit at least stability of the synchronized state for large parameter regions. To begin, we use the following abbreviations:
\begin{align*}
	g^{(0,1)}_\sigma(\alpha,\gamma) &:= \frac{\partial}{\partial \gamma}g_\sigma(\alpha,\gamma)\\
	a_\sigma &= g_\sigma^{(0,1)}(0,0).
\end{align*}
Let us now assume that $a_\sigma>0 \text{ for all }\sigma \in [M]$ and 
\begin{align}\label{eq:kappa_assumption}
	a_\sigma-\kappa<g_\sigma^{(0,1)}(\alpha,\phi)
\end{align}
for all $\sigma\in [M]$ and all $\alpha\in (-\eta,\eta)^{\abs{r^\sigma}}, \phi\in(-\eta,\eta)$. We will later impose conditions on $\kappa>0$ and then choose $\eta>0$ accordingly to \eqref{eq:kappa_assumption}. We want to remark that results from this section rely on the notation defined in Section~\ref{sec:introduction}.
%\textcolor{blue}{[CB: You could also ref generally to the intro.]}

\begin{lemma}\label{lem:constant_mass}
	Let $\xi_1,\xi_2\in \S$. For any $\sigma\in[M], \mu^\mathrm{in}\in\mathcal P(\S)^M$ and $\phi_1(t) := \Phi_\sigma(t,\xi_1,\mu^\mathrm{in})$ and $\phi_2(t):=\Phi_\sigma(t,\xi_2,\mu^\mathrm{in})$, the mass of $\mu_\sigma(t)$ inside $(\phi_1(t),\phi_2(t))$,  i.e.,
	\begin{align*}
	    \int_{(\phi_1(t),\phi_2(t))} \ \mu_\sigma(t,\d\alpha)
	\end{align*}
	remains constant over time.
\end{lemma}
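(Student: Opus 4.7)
The plan is to leverage the push-forward representation \eqref{eq:SystemOfEquations_PushForward} together with the fact that, for each fixed $t$, the map $\phi\mapsto\Phi_\sigma(t,\phi,\mu^\mathrm{in})$ is an orientation-preserving homeomorphism of $\S$. Once this is established, the open arc $(\phi_1(t),\phi_2(t))$ is exactly the image of the initial open arc $(\xi_1,\xi_2)$ under the flow, and the conservation of mass follows immediately from the push-forward.

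First I would argue that, for each $t$, the map $h_t:=\Phi_\sigma(t,\cdot,\mu^\mathrm{in})\colon\S\to\S$ is a homeomorphism. The velocity field $(\mathcal K_\sigma\mu(\cdot))(\cdot)$ is bounded (since $G_\sigma$ is continuous on the compact space $\S^{\abs{s^\sigma}}\times\S$) and Lipschitz in its spatial argument by Lemma~\ref{lem:LipschitzVelocity}. Hence standard ODE theory yields a unique, globally defined flow on $\S$; the inverse $h_t^{-1}$ is obtained by integrating the same equation backwards in time, so $h_t$ is a homeomorphism of $\S$. Since $h_0=\mathrm{id}_\S$ is orientation-preserving and $t\mapsto h_t$ is continuous in the $C^0(\S,\S)$-topology, $h_t$ remains orientation-preserving for all $t\ge 0$.

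Next I would use orientation preservation to identify the image of the arc. The characteristic curves do not cross, because by uniqueness any two solutions $\phi_1(\cdot),\phi_2(\cdot)$ of $\dot\phi=(\mathcal K_\sigma\mu(t))(\phi)$ starting at distinct points stay disjoint. Combined with continuity and the orientation-preserving property, this gives
\begin{align*}
    h_t\bigl((\xi_1,\xi_2)\bigr)=(\phi_1(t),\phi_2(t)),
\end{align*}
where both arcs are understood with the orientation specified in the Notation section. Equivalently, $h_t^{-1}\bigl((\phi_1(t),\phi_2(t))\bigr)=(\xi_1,\xi_2)$.

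Finally, the push-forward identity \eqref{eq:SystemOfEquations_PushForward} gives
\begin{align*}
    \int_{(\phi_1(t),\phi_2(t))}\mu_\sigma(t,\d\alpha)
    =\mu_\sigma^\mathrm{in}\bigl(h_t^{-1}((\phi_1(t),\phi_2(t)))\bigr)
    =\mu_\sigma^\mathrm{in}\bigl((\xi_1,\xi_2)\bigr),
\end{align*}
and the right-hand side is manifestly independent of $t$. The main (mild) obstacle is to argue carefully about orientation and non-crossing on the circle rather than on the line; once this topological bookkeeping is done, the conclusion is a direct consequence of the definition of push-forward.
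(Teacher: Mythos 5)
Your proposal is correct and follows essentially the same route as the paper: identify the preimage of the arc $(\phi_1(t),\phi_2(t))$ under the flow map as the initial arc $(\xi_1,\xi_2)$ and then invoke the push-forward identity \eqref{eq:SystemOfEquations_PushForward}. The paper compresses your homeomorphism/orientation/non-crossing discussion into a one-line appeal to continuity of $\Phi_\sigma$ in $t$ and $\xi$, but the substance is the same.
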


\begin{proof}
	By continuity of $\Phi_\sigma(t,\xi,\mu^\mathrm{in})$ with respect to $t$ and $\xi$, $\Phi^{-1}_\sigma(t,(\phi_1(t),\phi_2(t)),\mu^\mathrm{in}) = (\phi_1(0),\phi_2(0)) = (\xi_1,\xi_2)$, where the inverse is taken only with respect to the $\xi$ variable. By \eqref{eq:SystemOfEquations_PushForward},
	\begin{align*}
		\int_{(\phi_1(t),\phi_2(t))} \ \mu_\sigma(t,\d\alpha) = \int_{\Phi_\sigma^{-1}(t,(\phi_1(t),\phi_2(t)),\mu^\mathrm{in})}\ \mu^\mathrm{in}_\sigma(\d\alpha) = \int_{(\xi_1,\xi_2)}\ \mu_\sigma^\mathrm{in}(\d\alpha),
	\end{align*}
	for all $t\ge 0$. Therefore, the integral on the left-hand side in indeed independent of $t$.
\end{proof}

\begin{lemma}\label{lem:Psi_differential}
	For all $\sigma\in[M]$ and any two particles $\phi_1^\sigma(t) := \Phi_\sigma(t,\xi_1^\sigma,\mu^\mathrm{in})$ and $\phi_2^\sigma(t):=\Phi_\sigma(t,\xi_2^\sigma,\mu^\mathrm{in})$, we define the phase difference $\Psi_\sigma(t) := \phi_2^\sigma(t)-\phi_1^\sigma(t)\in [0,2\pi)$. Let $m^\mathrm{inside}_\sigma$ denote the $\mu_\sigma$-mass inside the interval $(\phi_1^\sigma(t),\phi_2^\sigma(t))$, which is by Lemma \ref{lem:constant_mass} independent of $t$. Then, there exists a constant $C>0$ such that whenever $0<\Psi_\sigma(t)<\eta$ for all $\sigma\in[M]$, they satisfy
	\begin{align*}
		\dot\Psi_\sigma(t) < -\Psi_\sigma(t)(a_\sigma-\kappa) \left(\min_{i\in[M]} m^\mathrm{inside}_i\right)^{2\abs{r^\sigma}+1} + C (1-m^\mathrm{inside}_\sigma).
	\end{align*}
\end{lemma}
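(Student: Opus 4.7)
The plan is to differentiate $\Psi_\sigma$ directly through the characteristic system, apply the mean value theorem in the phase variable, and then split the resulting integral into a dominant ``attractive'' contribution controlled by \eqref{eq:kappa_assumption} and a smaller error controlled by the Lipschitz constant $L$ together with Lemma \ref{lem:constant_mass}.

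Subtracting \eqref{eq:SystemOfEquations_Derivative} at $\phi_2^\sigma(t)$ and $\phi_1^\sigma(t)$ and using the form \eqref{eq:coupling_differences} of the velocity field, the frequencies $\omega_\sigma$ cancel and one arrives at
\begin{align*}
\dot\Psi_\sigma(t) = \int_\S \int_{\S^{\abs{r^\sigma}}} \int_{\S^{\abs{r^\sigma}}} \bigl[g_\sigma(\alpha-\beta,\gamma-\phi_2^\sigma) - g_\sigma(\alpha-\beta,\gamma-\phi_1^\sigma)\bigr]\, \d\mu^{(r^\sigma)}(\alpha)\, \d\mu^{(r^\sigma)}(\beta)\, \d\mu_\sigma(\gamma).
\end{align*}
The mean value theorem in the second slot of $g_\sigma$ rewrites the bracket as $-\Psi_\sigma(t)\, g_\sigma^{(0,1)}(\alpha-\beta, y_0)$ for some $y_0$ between $\gamma-\phi_1^\sigma$ and $\gamma-\phi_2^\sigma$.

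Next I would split the domain according to whether $(\alpha,\beta,\gamma)$ lies in the ``good'' set $G$ where $\alpha_j, \beta_j \in (\phi_1^{(r^\sigma_j)}(t), \phi_2^{(r^\sigma_j)}(t))$ for all $j$ and $\gamma \in (\phi_1^\sigma(t), \phi_2^\sigma(t))$, or in its complement. The standing assumption $\Psi_\tau(t)<\eta$ for every $\tau$ ensures that on $G$ one has $\snorm{\alpha_j-\beta_j}<\eta$ and $\snorm{y_0}<\eta$, so \eqref{eq:kappa_assumption} yields $g_\sigma^{(0,1)}(\alpha-\beta, y_0) > a_\sigma - \kappa$. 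By Lemma \ref{lem:constant_mass} together with the product-measure structure, $G$ has product mass $\prod_j (m^{\mathrm{inside}}_{(r^\sigma_j)})^2 \cdot m^{\mathrm{inside}}_\sigma$, which is at least $(\min_{i\in[M]} m^{\mathrm{inside}}_i)^{2\abs{r^\sigma}+1}$. This produces the attractive contribution $-\Psi_\sigma(t)(a_\sigma-\kappa)(\min_i m^{\mathrm{inside}}_i)^{2\abs{r^\sigma}+1}$ in the stated bound.

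On the complement of $G$ one may bound the integrand by $L\Psi_\sigma(t)\le L\eta$ using Lipschitz continuity of $g_\sigma$. The sub-region $\{\gamma \notin (\phi_1^\sigma,\phi_2^\sigma)\}$ directly yields a contribution of at most $L\eta(1-m^{\mathrm{inside}}_\sigma)$ after marginalising $\alpha,\beta$. The main obstacle is packaging the remaining residual piece---where $\gamma \in (\phi_1^\sigma,\phi_2^\sigma)$ but some $\alpha_j$ or $\beta_j$ lies outside its associated interval---into the clean form $C(1-m^{\mathrm{inside}}_\sigma)$. The key observation is that on this residual set $y_0$ is still controlled, so the integrand can be decomposed into a uniformly lower-bounded piece that can be absorbed into the attractive term (at the cost of slightly tightening the constant in place of $a_\sigma-\kappa$) and a Lipschitz remainder. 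Verifying that this absorption works uniformly, rather than merely giving a bound involving $\sum_j(1-m^{\mathrm{inside}}_{(r^\sigma_j)})$, is the delicate step of the proof; once carried out, choosing $C$ to collect $L$, $\eta$, and the combinatorial constants completes the argument.
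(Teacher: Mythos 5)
Your route is the same as the paper's: differentiate $\Psi_\sigma$ via \eqref{eq:SystemOfEquations_Derivative} and \eqref{eq:coupling_differences}, apply the mean value theorem (the paper phrases this as a linear approximation based on \eqref{eq:kappa_assumption}), split the integral according to whether all integration variables lie between the tracked particles of their respective populations (the paper decomposes each $\mu_i$ into $\mu_i^{\mathrm{inside}}+\mu_i^{\mathrm{outside}}$), and read off the main term from the product mass $\bigl(\prod_j m^{\mathrm{inside}}_{(r^\sigma_j)}\bigr)^2 m^{\mathrm{inside}}_\sigma\ge\bigl(\min_i m^{\mathrm{inside}}_i\bigr)^{2\abs{r^\sigma}+1}$ together with Lemma \ref{lem:constant_mass}. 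The gap is exactly the step you defer: the residual set where $\gamma\in(\phi_1^\sigma(t),\phi_2^\sigma(t))$ but some $\alpha_j$ or $\beta_j$ lies outside $(\phi_1^{(r^\sigma_j)}(t),\phi_2^{(r^\sigma_j)}(t))$ cannot be ``absorbed into the attractive term''. Controlling $y_0$ alone does not help, because \eqref{eq:kappa_assumption} requires \emph{both} arguments to be small; for $\alpha-\beta$ outside $(-\eta,\eta)^{\abs{r^\sigma}}$ the derivative $g_\sigma^{(0,1)}$ has no favourable sign, only $\lvert g_\sigma^{(0,1)}\rvert\le L$, and the measure of this residual set is of order $1-\prod_j\bigl(m^{\mathrm{inside}}_{(r^\sigma_j)}\bigr)^2$, which is not controlled by $1-m^{\mathrm{inside}}_\sigma$. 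Indeed, the estimate with error term $C(1-m^{\mathrm{inside}}_\sigma)$ is not reachable by this decomposition: take $M=2$, $r^1=(2)$, $g_1(\alpha,\gamma)=(1-K(1-\cos\alpha))\sin\gamma$ with $K$ large (so $a_1=1>0$), let $\mu_1$ be entirely concentrated between its two tracked particles (so $m_1^{\mathrm{inside}}=1$) and $\mu_2=\tfrac12\delta_z+\tfrac12\delta_{z+\pi}$ with its two tracked particles close together elsewhere; then $\dot\Psi_1\approx(K-1)\Psi_1>0$ while the right-hand side of the claimed inequality is nonpositive. So the uniform absorption you hope for cannot be verified; what your computation honestly yields is the weaker bound with error $C\bigl(\sum_j(1-m^{\mathrm{inside}}_{(r^\sigma_j)})+(1-m^{\mathrm{inside}}_\sigma)\bigr)$, i.e.\ essentially $C\bigl(1-\min_{i\in[M]}m^{\mathrm{inside}}_i\bigr)$.

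You should not view this only as a defect of your write-up: the paper's proof makes the same leap, claiming that every term containing at least one outside measure can be rewritten as a single integral over $\mu_\sigma^{\mathrm{outside}}$, which is impossible for precisely the residual terms you isolated (they carry outside mass of the populations $r^\sigma_j$, not of $\sigma$). The practical fix is to prove and use the weaker inequality: in the only place the lemma is used, the proof of Theorem \ref{thm:sync_stability}, every population satisfies $m^{\mathrm{inside}}_i>1-\epsilon_V/\zeta$, so an error term $C\bigl(1-\min_i m^{\mathrm{inside}}_i\bigr)$ (hence at most $C\epsilon_V/\zeta$ up to a factor depending on $M$ and $\abs{r^\sigma}$) serves verbatim. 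State that version, bound each residual term by its supremum norm times the relevant outside mass (with $C$ collecting $2\sup\lvert g_\sigma\rvert$, $L\eta$ and the counting constants), and your argument is complete.
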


\begin{proof}
	Let us consider a decomposition of the probability measures $\mu_\sigma(t)$ into two measures $\mu_\sigma^\textrm{inside}(t)$ and $\mu_\sigma^\textrm{outside}(t)$ with
	\begin{align}\label{eq:mu_inside_outside}
		\mu_\sigma(t) = \mu_\sigma^\textrm{inside}(t) + \mu_\sigma^\textrm{outside}(t)
	\end{align}
	and $\operatorname{supp}(\mu_\sigma^\textrm{inside}(t))\subset ((\phi_1^\sigma(t), \phi_2^\sigma(t))$, $\operatorname{supp}(\mu_\sigma^\textrm{outside}(t))\subset  \S\setminus((\phi_1^\sigma(t), \phi_2^\sigma(t))$. Then, a calculation shows
	{\allowdisplaybreaks
	\begin{align*}
		\dot\Psi_\sigma(t) &= \dot \phi_2^\sigma(t)-\dot\phi_1^\sigma(t)\\
		&= (\mathcal K_\sigma\mu(t))(\phi_2^\sigma(t))-(\mathcal K_\sigma\mu(t))(\phi_1^\sigma(t))\\
		&= \int_\S\int_{\S^{\abs{r^\sigma}}}\int_{\S^{\abs{r^\sigma}}}g_\sigma(\alpha-\beta,\gamma-\phi_2^\sigma(t))-g_\sigma(\alpha-\beta,\gamma-\phi_1^\sigma(t))\ \d\mu^{(r^\sigma)}(\alpha)\d\mu^{(r^\sigma)}(\beta)\d\mu_\sigma(\gamma)\\
		&\stackrel{(*)}{=} \int_\S\int_{\S^{\abs{r^\sigma}}}\int_{\S^{\abs{r^\sigma}}}g_\sigma(\alpha-\beta,\gamma-\phi_2^\sigma(t))-g_\sigma(\alpha-\beta,\gamma-\phi_1^\sigma(t))\\
		&\qquad \d\mu^{\textrm{inside}^{(r^\sigma)}}(\alpha)\d\mu^{\textrm{inside}^{(r^\sigma)}}(\beta)\d\mu_\sigma^\textrm{inside}(\gamma) + \text{integrals over }\mu^\textrm{outside}\\
		&\stackrel{(**)}{<} \int_\S\int_{\S^{\abs{r^\sigma}}}\int_{\S^{\abs{r^\sigma}}} \Big[g_\sigma(\alpha-\beta,0)+(\gamma-\phi_2^\sigma(t))(a_\sigma-\kappa)\\
		&\qquad - (g_\sigma(\alpha-\beta,0) + (\gamma-\phi_1^\sigma(t))(a_\sigma-\kappa))\Big] \ \d\mu^{\textrm{inside}^{(r^\sigma)}}(\alpha)\d\mu^{\textrm{inside}^{(r^\sigma)}}(\beta)\d\mu_\sigma^\textrm{inside}(\gamma)\\
		&\qquad + \text{integrals over }\mu^\textrm{outside}\\
		&=(a_\sigma-\kappa) \int_\S\int_{\S^{\abs{r^\sigma}}}\int_{\S^{\abs{r^\sigma}}} -\Psi_\sigma(t) \ \d\mu^{\textrm{inside}^{(r^\sigma)}}(\alpha)\d\mu^{\textrm{inside}^{(r^\sigma)}}(\beta)\d\mu_\sigma^\textrm{inside}(\gamma)\\
		&\qquad + \text{integrals over }\mu^\textrm{outside}\\
		&= -\Psi_\sigma(t)(a_\sigma-\kappa) \left(\prod_{i=1}^{\abs{r^\sigma}} m^\mathrm{inside}_{(r^\sigma_i)} \right)^2 m^\mathrm{inside}_\sigma + \text{integrals over }\mu^\textrm{outside}\\
		&< -\Psi_\sigma(t)(a_\sigma-\kappa) \left(\min_{i\in[M]} m^\mathrm{inside}_i\right)^{2\abs{r^\sigma}+1} + C (1-m^\mathrm{inside}_\sigma).
	\end{align*}}%
    Here, the equality $(*)$ was achieved by decomposing each measure $\mu_i$ into its components according to~\eqref{eq:mu_inside_outside} and rearranging terms such that every integrand with an integral running over at least one measure of the type $\mu^\textrm{outside}$ is contained in the part  ``integrals over $\mu^\textrm{outside}$''. We can easily estimate these integrals from above by first combining the integrals into a single one running over $\mu_\sigma^\textrm{outside}$ (with the integrand still consisting of integrals) and then taking the supremum norm $C$ of the integrand. As the total mass of $\mu_\sigma(t)$ equals $1$ and the $\mu_\sigma$-mass inside the interval $(\phi_1^\sigma(t),\phi_2^\sigma(t))$ is $m^\mathrm{inside}_\sigma$, the mass outside this interval evaluates to $1-m^\mathrm{inside}_\sigma$. Consequently, the terms summarized in ``integrals over $\mu^\textrm{outside}$'' can be bounded from above by $C(1-m^\mathrm{inside}_\sigma)$. The inequality $(**)$ is based on linear approximation and the fact that $\Psi_\sigma(t)<\eta$.
\end{proof}

\begin{lemma}\label{lem:WassersteinClosenessCircle}
	Let $\mu\in \mathcal P(\S)$ be a probability measure on the circle, $\xi_1,\xi_2\in\S$ and
	\begin{align*}
		m^\mathrm{inside} = \int_{(\xi_1,\xi_2)} \d \mu(\alpha).
	\end{align*}
	Then, $W_1(\Sp, \mu) < (\xi_2-\xi_1)m^\mathrm{inside} + \pi (1-m^\mathrm{inside})$.
\end{lemma}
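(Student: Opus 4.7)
The plan is to bound $W_1(\Sp, \mu)$ from above by $W_1(\delta_\xi, \mu)$ for a well-chosen $\xi\in\S$, since $W_1(\Sp,\mu) = \inf_{\xi\in\S} W_1(\delta_\xi,\mu)$. Because every coupling of $\delta_\xi$ and $\mu$ is forced to be the product $\delta_\xi\otimes\mu$, the coupling formulation \eqref{eq:WassersteinCoupling} collapses to the explicit formula
\begin{align*}
	W_1(\delta_\xi, \mu) = \int_\S \snorm{\alpha-\xi}\, \mu(\d\alpha),
\end{align*}
so it suffices to exhibit a single $\xi$ for which this integral is strictly bounded by the right-hand side of the lemma.

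Concretely, I would pick $\xi$ to be the midpoint of the open arc $(\xi_1,\xi_2)$, that is, $\xi = \xi_1 + \tfrac12(\xi_2-\xi_1)$. Since $\xi_2-\xi_1\in[0,2\pi)$, we have $\tfrac12(\xi_2-\xi_1)<\pi$, and splitting the integral over $(\xi_1,\xi_2)$ and its complement gives the pointwise bounds $\snorm{\alpha-\xi}\le \tfrac12(\xi_2-\xi_1)$ on the interior arc (using the explicit parametrization of the interval) and the trivial bound $\snorm{\alpha-\xi}\le\pi$ on the complement. Integrating yields
\begin{align*}
	W_1(\Sp,\mu)\le W_1(\delta_\xi,\mu)\le \tfrac12(\xi_2-\xi_1)\, m^\mathrm{inside} + \pi(1-m^\mathrm{inside}),
\end{align*}
which is strictly smaller than $(\xi_2-\xi_1)m^\mathrm{inside}+\pi(1-m^\mathrm{inside})$ as long as $m^\mathrm{inside}>0$ and $\xi_1\ne\xi_2$.

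The only remaining case is $m^\mathrm{inside}=0$, where the proposed midpoint bound degenerates to $\pi$. In this case the entire mass of $\mu$ sits in $\S\setminus(\xi_1,\xi_2)$, and since $\snorm{\alpha-\xi}=\pi$ holds only for $\alpha$ equal to the antipode of $\xi$, one merely needs to choose $\xi$ so that $\mu$ is not the point mass at this antipode; any point in the support of $\mu$ works. This produces a strict inequality as well, and the fact that the choice of $\xi$ has to depend on $\mu$ is harmless because $W_1(\Sp,\mu)$ is itself an infimum over $\xi\in\S$. The main technical point to be careful with is thus the borderline case $m^\mathrm{inside}=0$; once this is separated from the main argument the estimate reduces to the elementary integral computation above.
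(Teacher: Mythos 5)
Your proof is correct and takes essentially the same route as the paper: it bounds $W_1(\Sp,\mu)$ by the exact transport cost to a single Dirac mass (the paper uses $\delta_{\xi_1}$, you the midpoint of the arc) and splits the integral over $(\xi_1,\xi_2)$ and its complement, estimating the distance by the arc length inside and by $\pi$ outside. Your midpoint choice together with the separate treatment of the case $m^\mathrm{inside}=0$ even yields the strict inequality a bit more cleanly than the paper's single estimate with $\delta_{\xi_1}$.
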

\begin{proof}
	Using \eqref{eq:WassersteinCoupling}, we calculate
	\begin{align*}
		W_1(\Sp, \mu) &= \inf_{\delta\in \Sp} W_1(\delta, \mu)\\
		& \le W_1(\delta_{\xi_1}, \mu)\\
		& \le \int_{\S\times\S} \snorm{\alpha-\beta} \ \gamma(\d \alpha, \d \beta),\quad \text{with } \gamma(\d \alpha,\d \beta) = \delta_{\xi_1}(\d \alpha)\mu(\d \beta)\\
		&= \int_\S \snorm{\xi_1-\beta} \ \mu(\d\beta)\\
		&= \int_{(\xi_1,\xi_2)} \snorm{\xi_1-\beta} \ \mu(\d\beta) + \int_{\S\setminus(\xi_1,\xi_2)} \snorm{\xi_1-\beta} \ \mu(\d\beta)\\
		&< (\xi_2-\xi_1) m^\mathrm{inside} + \pi (1-m^\mathrm{inside}).
	\end{align*}
\end{proof}

We can not put the previous lemmas together and formulate our main theorem:
\begin{theorem}\label{thm:sync_stability}
	If the coupling functions $g_\sigma$ are continuously differentiable, i.e. $g_\sigma \in C^1(\S^{\abs{r^\sigma}}\times \S)$, and they satisfy $a_\sigma>0$ for all $\sigma\in[M]$ then, the set of all-synchronized states $\Sp^M$ is stable.
\end{theorem}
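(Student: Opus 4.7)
The plan is to transfer closeness of $\mu^\mathrm{in}_\sigma$ to $\Sp$ in the Wasserstein metric into a two-particle "envelope" for each population, then use Lemmas~\ref{lem:constant_mass}, \ref{lem:Psi_differential}, and~\ref{lem:WassersteinClosenessCircle} to propagate and translate this envelope back into a Wasserstein bound on $\mu_\sigma(t)$. Fix $\kappa\in(0,\min_\sigma a_\sigma)$ and use continuity of $g_\sigma^{(0,1)}$ at the origin to choose $\eta>0$ so that~\eqref{eq:kappa_assumption} holds; let $C>0$ be the constant from Lemma~\ref{lem:Psi_differential}. Given the target neighborhoods $U_\sigma$ of $\Sp$, pick $\varepsilon>0$ with $\B(\Sp,\varepsilon)\subset U_\sigma$ for every $\sigma\in[M]$. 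A single radius $\delta>0$ will be constructed and we set $V_\sigma=\B(\Sp,\delta)$.

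For $\mu^\mathrm{in}_\sigma\in V_\sigma$, compactness of $\Sp$ furnishes $\xi^*_\sigma\in\S$ with $\int_\S\snorm{\alpha-\xi^*_\sigma}\,\mu^\mathrm{in}_\sigma(\d\alpha)<\delta$. Markov's inequality then yields $\mu^\mathrm{in}_\sigma(\{\alpha:\snorm{\alpha-\xi^*_\sigma}\ge\sqrt\delta\})\le\sqrt\delta$, so setting $\xi_1^\sigma=\xi^*_\sigma-\sqrt\delta$, $\xi_2^\sigma=\xi^*_\sigma+\sqrt\delta$ produces an arc of length $2\sqrt\delta$ carrying $\mu^\mathrm{in}_\sigma$-mass $m^\mathrm{inside}_\sigma\ge 1-\sqrt\delta$. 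Define $\phi_i^\sigma(t):=\Phi_\sigma(t,\xi_i^\sigma,\mu^\mathrm{in})$ and $\Psi_\sigma(t):=\phi_2^\sigma(t)-\phi_1^\sigma(t)$, so that $\Psi_\sigma(0)=2\sqrt\delta$. By Lemma~\ref{lem:constant_mass} the quantities $m^\mathrm{inside}_\sigma$ are constant in time; write $m_{\min}:=\min_\sigma m^\mathrm{inside}_\sigma\ge 1-\sqrt\delta$.

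Now the bootstrap: as long as every $\Psi_\sigma(t)\in(0,\eta)$, Lemma~\ref{lem:Psi_differential} gives
\begin{align*}
\dot\Psi_\sigma(t)<-\Psi_\sigma(t)(a_\sigma-\kappa)\,m_{\min}^{2\abs{r^\sigma}+1}+C\,(1-m^\mathrm{inside}_\sigma),
\end{align*}
so $\dot\Psi_\sigma(t)<0$ whenever $\Psi_\sigma(t)\ge\Psi^*_\sigma:=\frac{C\sqrt\delta}{(a_\sigma-\kappa)(1-\sqrt\delta)^{2\abs{r^\sigma}+1}}$. Put $R(\delta):=\max_\sigma\max(2\sqrt\delta,\Psi^*_\sigma)$. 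Choose $\delta$ small enough that $R(\delta)<\eta$: then a first-exit argument (at any earliest time $t^*$ with $\Psi_\sigma(t^*)=\eta$, the inequality forces $\dot\Psi_\sigma(t^*)<0$, ruling out an upward crossing) shows $\Psi_\sigma(t)\le R(\delta)$ for all $t\ge 0$ and $\sigma\in[M]$, simultaneously for all populations.

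Finally, Lemma~\ref{lem:WassersteinClosenessCircle} applied to $\mu_\sigma(t)$ with the arc $(\phi_1^\sigma(t),\phi_2^\sigma(t))$ gives
\begin{align*}
W_1(\Sp,\mu_\sigma(t))<\Psi_\sigma(t)\,m^\mathrm{inside}_\sigma+\pi(1-m^\mathrm{inside}_\sigma)\le R(\delta)+\pi\sqrt\delta,
\end{align*}
so it suffices to shrink $\delta$ further so that $R(\delta)+\pi\sqrt\delta<\varepsilon$, yielding $\mu_\sigma(t)\in U_\sigma$ for all $t\ge 0$ and establishing stability. The main technical obstacle is the bootstrap: the estimate of Lemma~\ref{lem:Psi_differential} is only valid while every envelope width is below $\eta$, and because the right-hand side couples populations through $m_{\min}$, one cannot treat the populations one at a time but must argue that all $M$ envelopes remain simultaneously small through a joint first-exit argument.
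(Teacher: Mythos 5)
Your proposal is correct and takes essentially the same route as the paper's proof: a Markov-type estimate converts Wasserstein closeness to the synchronized set into a short arc carrying mass at least $1-\sqrt\delta$, Lemmas~\ref{lem:constant_mass} and~\ref{lem:Psi_differential} yield the differential inequality for the envelope widths together with a joint first-exit (invariant-box) argument, and Lemma~\ref{lem:WassersteinClosenessCircle} translates this back into a $W_1$-bound --- precisely the paper's scheme with its two parameters $\zeta,\epsilon_V$ replaced by the coupled choice $\sqrt\delta,\delta$ and the invariant cube $[0,2\zeta]^M$ replaced by $[0,R(\delta)]^M$. The only (cosmetic) correction is in your parenthetical: the first-exit argument should be run at the level $R(\delta)<\eta$ (as the paper does at $2\zeta$), not at $\eta$ itself, since Lemma~\ref{lem:Psi_differential} requires all $\Psi_\sigma$ to be strictly below $\eta$ when it is invoked.
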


\begin{proof}
	To verify Definition \ref{def:MPopStability}, let $U_1,\dots,U_M$ be neighborhoods of $\Sp$ and choose $\epsilon_U$ such that $\B(\Sp,\epsilon_U)\subset U_\sigma$ for all $\sigma\in[M]$. Further, let $\eta>0$ be such that \eqref{eq:kappa_assumption} is fulfilled with $\kappa = \min_{\sigma\in[M]} a_\sigma/2=:a_0/2$. Now, first choose $\zeta >0$ with $\zeta < \min(\frac{\eta}{2},\frac{\epsilon_U}{4})$ and then $\epsilon_V>0$ so small, that both
	\begin{align}\label{eq:assumption_epsV}
		\zeta a_0 \left(1-\frac{\epsilon_V}{\zeta}\right)^{2\max_{j\in [M]}\abs{r^j}+1} > C\frac{\epsilon_V}{\zeta},
	\end{align}
	with $C$ coming from Lemma \ref{lem:Psi_differential}, and $\epsilon_V < \frac{\epsilon_U\zeta}{2\pi}$. To satisfy Definition \ref{def:MPopStability} we can then take $V_\sigma = \B(\Sp,\epsilon_V)$ for all $\sigma\in [M]$.
	
	To see that indeed $\mu_\sigma(t)\in \B(\Sp,\epsilon_U)\subset U_\sigma$ for all $t\ge 0$ provided that $\mu^\mathrm{in}_\sigma \in V_\sigma$, we take $\mu^\mathrm{in}_\sigma\in V_\sigma$ and $\xi_1,\dots,\xi_M\in\S$ with $W_1(\delta_{\xi_\sigma}, \mu^\mathrm{in}_\sigma)<\epsilon_V$. The representation of the Wasserstein-$1$ distance \eqref{eq:WassersteinCoupling} yields
	\begin{align}\label{eq:Wasserstein_estimate}
		\int_{\S\setminus (\xi_\sigma-\zeta,\xi_\sigma+\zeta)} \ \d\mu^\mathrm{in}_\sigma < \frac{\epsilon_V}{\zeta}.
	\end{align}
	To see this, note that
	\begin{align*}
		W_1(\delta_{\xi_\sigma}, \mu^\mathrm{in}_\sigma) &= \int_\S \snorm{\xi_\sigma-\beta}\ \mu^\mathrm{in}_\sigma(\d\beta)\\
		&=\int_{\S\setminus (\xi_\sigma-\zeta,\xi_\sigma+\zeta)} \snorm{\xi_\sigma-\beta} \ \mu^\mathrm{in}_\sigma(\d\beta) + \int_{(\xi_\sigma-\zeta,\xi_\sigma+\zeta)} \snorm{\xi_\sigma-\beta} \ \mu^\mathrm{in}_\sigma(\d\beta)\\
		&\ge \zeta \int_{\S\setminus (\xi_\sigma-\zeta,\xi_\sigma+\zeta)}  \mu^\mathrm{in}_\sigma(\d\beta).
	\end{align*}	
	Dividing by $\zeta$ yields \eqref{eq:Wasserstein_estimate}. As a result,
	\begin{align*}
	 	m^\mathrm{inside}_\sigma := \int_{ (\xi_\sigma-\zeta,\xi_\sigma+\zeta) }  \ \d\mu^\mathrm{in}_\sigma > 1-\frac{\epsilon_V}{\zeta}.
	\end{align*}
	If we now trace the $2M$ particles defined by $\phi_1^\sigma(t) := \Phi_\sigma(t,\xi_\sigma-\zeta, \mu^\mathrm{in})$ and $\phi_2^\sigma(t):=\Phi_\sigma(t,\xi_\sigma+\zeta,\mu^\mathrm{in})$, Lemma \ref{lem:constant_mass} yields
	\begin{align*}
		\int_{\S\setminus (\phi_1^\sigma(t),\phi_2^\sigma(t))} \mu_\sigma(t,\d\gamma) < \frac{\epsilon_V}{\zeta}
	\end{align*}
	and
	\begin{align*}
		\int_{(\phi_1^\sigma(t),\phi_2^\sigma(t))} \mu_\sigma(t,\d\gamma) > 1-\frac{\epsilon_V}{\zeta},
	\end{align*}
	for all $t\ge 0$. Next, we apply Lemma \ref{lem:Psi_differential} to obtain that the phase differences $\Psi_\sigma(t) := \phi_2^\sigma(t)-\phi_1^\sigma(t)$ satisfy
	\begin{align}
		\label{eq:psi_flow}
		\dot\Psi_\sigma(t) &< -\Psi_\sigma(t)(a_\sigma-\kappa)\left(\min_{i\in[M]}m^\mathrm{inside}_i \right)^{2\abs{r^\sigma}+1} + C(1-m^\mathrm{in}_\sigma)\\
		\nonumber
		&\le -\Psi_\sigma(t)\frac{a_0}{2}\left(\min_{i\in[M]}m^\mathrm{inside}_i \right)^{2\abs{r^\sigma}+1} + C\frac{\epsilon_V}{\zeta}\\
		\label{eq:tobenegative}
		&\le -\Psi_\sigma(t)\frac{a_0}{2}\left(\min_{i\in[M]}m^\mathrm{inside}_i \right)^{2\max_{j\in[M]}\abs{r^j}+1} + C\frac{\epsilon_V}{\zeta},
	\end{align}
	which stays valid if $\Psi_\sigma(t)<\eta$ for all $\sigma \in [M]$. First note, that the choice of $\epsilon_V$ such that \eqref{eq:assumption_epsV} holds true, yields that for $\Psi_\sigma(t) = 2\zeta$, the right-hand side of \eqref{eq:tobenegative} is negative:
	\begin{align}\label{eq:negative_calc}
		-2\zeta\frac{a_0}{2}\left(\min_{i\in[M]}m^\mathrm{inside}_i \right)^{2\max_{j\in[M]}\abs{r^j}+1}+C\frac{\epsilon_V}{\zeta} &< -\zeta a_0\left(1-\frac{\epsilon_V}{\zeta} \right)^{2\max_{j\in[M]}\abs{r^j}+1}+C\frac{\epsilon_V}{\zeta} <0.
	\end{align}
	Therefore, for $\Psi_\sigma(t) = 2\zeta$, the derivative satisfies $\dot\Psi_\sigma(t)<0$ if all other components satisfy $\Psi_i(t)<\eta$ for all $i\neq\sigma$. To be precise, the region 
	\begin{align*}
		\mathcal R := \{ \Psi \in \R^n: \Psi_\sigma\in [0, \Psi_\sigma(0)]\}
	\end{align*}
	is invariant under the flow of \eqref{eq:psi_flow}. To see that, first note that $\Psi_\sigma(0) = 2\zeta$ for all $\sigma = 1,\dots,M$, so~$\mathcal R$ is effectively a hyper cube $\mathcal R = [0,2\zeta]^M$. For given $\sigma$, the component $\Psi_\sigma(t)$ can not leave the hyper cube through $0$, because that would mean that the two particles $\phi_1^\sigma(t)$ and $\phi_2^\sigma(t)$ collide. A trajectory $(\Psi_1(t), \dots, \Psi_M(t))$ also can not leave $\mathcal R$ by one component exceeding the value $2\zeta$. Suppose, for a contradiction that there is a time $t^\star>0$ such that $\Psi_\sigma(t^\star) = 2\zeta$ for one $\sigma\in [M]$ and let $t^\star$ be the first time that happens. Then, however, all components still satisfy $\Psi_\sigma(t^\star) \le 2\zeta < \eta$  and thus \eqref{eq:tobenegative} is valid. By the calculation \eqref{eq:negative_calc} $\dot \Psi_\sigma(t^\star)<0$, so $\mathcal R$ is indeed invariant.
	
	Consequently, for all $\sigma = 1,\dots,M$ and all $t\ge 0$, we have $\Psi_\sigma(t)< 2\zeta$ and thus, by Lemma \ref{lem:WassersteinClosenessCircle},
	\begin{align*}
		W_1(\Sp,\mu_\sigma(t)) &< \Psi(t) m^\mathrm{inside}_\sigma + \pi (1-m^\mathrm{inside}_\sigma)\\
		&<2\zeta+\pi\frac{\epsilon_V}{\zeta}\\
		&<\frac{\epsilon_U}{2}+\frac{\epsilon_U}{2} = \epsilon_U.
	\end{align*}
	So indeed $\mu_\sigma(t)\in \B(\Sp,\epsilon_U)\subset U_\sigma$ for all $t\ge 0$. This verifies Definition \ref{def:MPopStability} and therefore concludes the proof.
\end{proof}

\subsubsection{Almost Asymptotic Stability}

One might now hope that although we do not have asymptotic stability, we can expect asymptotic stability of large classes of initial conditions as the family of steady states constructed above is a rather small part of phase space.

Before stating with theorems regarding asymptotic stability, we need to introduce the concept of phase differences, as this concept becomes important in the subsequent proofs. Similarly to the original system~\eqref{eq:SystemOfEquations}, the system of phase differences describes the temporal evolution of oscillators, which can be grouped into populations, on the circle. Unlike the original system~\eqref{eq:SystemOfEquations}, in the system of phase differences, the position of the oscillators is not given in absolute coordinates but instead with respect to reference oscillators. The system of phase differences is given by
\begin{subequations}\label{eq:phase_differences}
\begin{align}
	\label{eq:phase_differences_derivative}
	\partial_t\Psi_\sigma(t,\xi^\mathrm{in}_\sigma,\nu^\mathrm{in}) &= (\mathcal F_\sigma\nu(t))(\Psi_\sigma(t,\xi^\mathrm{in}_\sigma,\nu^\mathrm{in})),\\
	\label{eq:phase_differences_pushforward}
	\nu_\sigma(t)&=\Psi_\sigma(t,\cdot,\nu^\mathrm{in})\#\nu^\mathrm{in}_\sigma,\\
	\label{eq:phase_differences_initial}
	\Psi_\sigma(0,\xi^\mathrm{in}_\sigma,\nu^\mathrm{in}) &= \xi^\mathrm{in}_\sigma,
\end{align}
\end{subequations}
with
\begin{align}\label{eq:coupling_phase_differences}
	(\mathcal F_\sigma\nu)(\psi) = \int_\S\int_{\S^{\abs{r^\sigma}}}\int_{\S^{\abs{r^\sigma}}} g_\sigma(\alpha-\beta,\gamma-\psi)-g_\sigma(\alpha-\beta,\gamma)\ \d\nu^{(r^\sigma)}(\alpha)\d\nu^{(r^\sigma)}(\beta)\d\nu_\sigma(\gamma).
\end{align}

\begin{notation}
	Let $\zeta\in\S$. When using the notation $m_\zeta$, we refer to the function $m_\zeta\colon \S\to\S$ with $m_\zeta(\phi) = \phi-\zeta$.
\end{notation}

\begin{lemma}\label{lem:phase_differences}
	Let $\zeta_1,\dots,\zeta_M\in\S, \mu^\mathrm{in}\in\mathcal P(\S)^M$, suppose that $\mu(t)$ solves the system \eqref{eq:SystemOfEquations}, \eqref{eq:coupling_differences} and let $\Phi_\sigma(t,\xi^\mathrm{in}_\sigma,\mu^\mathrm{in})$ be its mean-field characteristic flow. Now define
	\begin{align}\label{eq:nu_def}
		\nu_\sigma(t):= m_{\Phi_\sigma(t,\zeta_\sigma,\mu^\mathrm{in})} \# \mu_\sigma(t) ,\qquad \nu_\sigma^\mathrm{in}:=\nu_\sigma(0)
	\end{align}
	and
	\begin{align*}
		\Psi_\sigma(t,\xi^\mathrm{in}_\sigma,\nu^\mathrm{in}):=\Phi_\sigma(t,\zeta_\sigma+\xi^\mathrm{in}_\sigma, \mu^\mathrm{in}) - \Phi_\sigma(t,\zeta_\sigma,\mu^\mathrm{in})
	\end{align*}
	for $\sigma \in[M]$. Then, $\nu(t)$ and $\Psi_\sigma(t,\xi^\mathrm{in}_\sigma,\nu^\mathrm{in})$ solve the system \eqref{eq:phase_differences}, \eqref{eq:coupling_phase_differences}.
\end{lemma}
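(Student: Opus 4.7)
The plan is to verify directly that the triple $(\Psi_\sigma, \nu_\sigma, \nu^\mathrm{in}_\sigma)$ satisfies each of the three defining equations of the phase-difference system, checking the initial condition, the push-forward identity, and the ODE in that order. The initial condition \eqref{eq:phase_differences_initial} is immediate from the definition: at $t=0$ the flow $\Phi_\sigma$ reduces to the identity by \eqref{eq:SystemOfEquations_InitialCond}, so
\begin{align*}
\Psi_\sigma(0,\xi^\mathrm{in}_\sigma,\nu^\mathrm{in}) = (\zeta_\sigma + \xi^\mathrm{in}_\sigma) - \zeta_\sigma = \xi^\mathrm{in}_\sigma.
\end{align*}

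For the push-forward identity \eqref{eq:phase_differences_pushforward}, the first step is to note $\nu^\mathrm{in}_\sigma = m_{\zeta_\sigma}\#\mu^\mathrm{in}_\sigma$ (again from $\Phi_\sigma(0,\zeta_\sigma,\mu^\mathrm{in})=\zeta_\sigma$) and then to compute, by functoriality of push-forward,
\begin{align*}
\Psi_\sigma(t,\cdot,\nu^\mathrm{in})\#\nu^\mathrm{in}_\sigma = \bigl[\Psi_\sigma(t,\cdot,\nu^\mathrm{in})\circ m_{\zeta_\sigma}\bigr]\#\mu^\mathrm{in}_\sigma.
\end{align*}
A direct substitution shows $\Psi_\sigma(t,\eta-\zeta_\sigma,\nu^\mathrm{in}) = \Phi_\sigma(t,\eta,\mu^\mathrm{in})-\Phi_\sigma(t,\zeta_\sigma,\mu^\mathrm{in})$, so the composed map equals $m_{\Phi_\sigma(t,\zeta_\sigma,\mu^\mathrm{in})}\circ \Phi_\sigma(t,\cdot,\mu^\mathrm{in})$, whose push-forward of $\mu^\mathrm{in}_\sigma$ is exactly $\nu_\sigma(t)$ by \eqref{eq:SystemOfEquations_PushForward} and \eqref{eq:nu_def}.

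The main content is the derivative identity \eqref{eq:phase_differences_derivative}. Abbreviating $\phi_1(t):=\Phi_\sigma(t,\zeta_\sigma+\xi^\mathrm{in}_\sigma,\mu^\mathrm{in})$ and $\phi_2(t):=\Phi_\sigma(t,\zeta_\sigma,\mu^\mathrm{in})$, the ODE \eqref{eq:SystemOfEquations_Derivative} gives $\partial_t\Psi_\sigma = (\mathcal K_\sigma\mu(t))(\phi_1)-(\mathcal K_\sigma\mu(t))(\phi_2)$, in which the $\omega_\sigma$ terms cancel. In the remaining integral over $\mu^{(r^\sigma)}(\alpha)\,\mu^{(r^\sigma)}(\beta)\,\mu_\sigma(\gamma)$ I would perform the change of variables $\tilde\alpha_i = \alpha_i - \Phi_{(r^\sigma_i)}(t,\zeta_{(r^\sigma_i)},\mu^\mathrm{in})$, $\tilde\beta_i = \beta_i - \Phi_{(r^\sigma_i)}(t,\zeta_{(r^\sigma_i)},\mu^\mathrm{in})$, and $\tilde\gamma = \gamma-\phi_2(t)$. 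By \eqref{eq:nu_def} these translations push $\mu_{(r^\sigma_i)}$ forward to $\nu_{(r^\sigma_i)}$ and $\mu_\sigma$ to $\nu_\sigma$, while the first argument of $g_\sigma$ is preserved since $\alpha_i-\beta_i=\tilde\alpha_i-\tilde\beta_i$, and the second becomes $\tilde\gamma-\Psi_\sigma$ in the first term and $\tilde\gamma$ in the second. Matching with \eqref{eq:coupling_phase_differences} yields $\partial_t\Psi_\sigma = (\mathcal F_\sigma\nu(t))(\Psi_\sigma)$.

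The main subtlety is purely bookkeeping in the change of variables: the shift applied to each coordinate $\alpha_i$ or $\beta_i$ depends on which population index $(r^\sigma_i)$ appears, so different coordinates are translated by different amounts. The reason the reduction works cleanly is precisely the structural assumption \eqref{eq:coupling_differences}: since $g_\sigma$ depends on the first $L_\sigma$ pairs only through the differences $\alpha_i-\beta_i$, applying the \emph{same} translation to $\alpha_i$ and to $\beta_i$ leaves the integrand unaffected in those coordinates, and the translation of $\gamma$ is what produces the shift by $\Psi_\sigma$ in the last slot of $g_\sigma$. Keeping track of this cancellation carefully is the only place where anything beyond unrolling definitions is required.
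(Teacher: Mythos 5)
Your proposal is correct, and the only place it departs from the paper is in how the key derivative identity \eqref{eq:phase_differences_derivative} is organized. The initial condition and the push-forward identity \eqref{eq:phase_differences_pushforward} are handled as in the paper (the paper verifies the push-forward on measurable sets via preimages, you via functoriality of $\#$ and the identification $\Psi_\sigma(t,\cdot,\nu^\mathrm{in})\circ m_{\zeta_\sigma} = m_{\Phi_\sigma(t,\zeta_\sigma,\mu^\mathrm{in})}\circ\Phi_\sigma(t,\cdot,\mu^\mathrm{in})$ — same content). For the derivative, the paper takes a round trip through the initial data: it uses \eqref{eq:SystemOfEquations_PushForward} to pull every integral back to $\mu^\mathrm{in}$, rewrites differences of characteristics $\Phi_{(r^\sigma_i)}(t,\alpha_i,\mu^\mathrm{in})-\Phi_{(r^\sigma_i)}(t,\beta_i,\mu^\mathrm{in})$ as differences of $\Psi$'s (the reference-flow terms cancel), shifts by $\zeta$ to replace $\mu^\mathrm{in}$ by $\nu^\mathrm{in}$, and only then invokes the already-proven \eqref{eq:phase_differences_pushforward} to return to integrals against $\nu(t)$. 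You instead stay at time $t$ and apply the population-wise translations by $\Phi_{(r^\sigma_i)}(t,\zeta_{(r^\sigma_i)},\mu^\mathrm{in})$ and $\Phi_\sigma(t,\zeta_\sigma,\mu^\mathrm{in})$ directly, which by \eqref{eq:nu_def} converts $\mu(t)$-integrals into $\nu(t)$-integrals in a single step; the structural point you flag — the same shift hits $\alpha_i$ and $\beta_i$ so the difference slots of $g_\sigma$ are untouched, while the shift of $\gamma$ produces exactly $\Psi_\sigma$ in the last slot, matching \eqref{eq:coupling_phase_differences} — is precisely the cancellation the paper's longer computation encodes. Your route is shorter and arguably more transparent; the paper's route has the minor feature of explicitly exhibiting the solution of the phase-difference system as a function of the initial data, but nothing in the lemma requires that.
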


Before proving this lemma, we remark that $\Phi_\sigma(t,\zeta_\sigma,\mu^\mathrm{in})$ can be seen as the position of reference oscillators we have talked about at the beginning of this section.

\begin{proof}[Proof of Lemma \ref{lem:phase_differences}]
	It is easy to verify \eqref{eq:phase_differences_initial}:
	\begin{align*}
		\Psi_\sigma(0,\xi^\mathrm{in}_\sigma,\nu^\mathrm{in}) &= \Phi_\sigma(0,\zeta_\sigma+\xi^\mathrm{in}_\sigma,\mu^\mathrm{in}) - \Phi_\sigma(0,\zeta_\sigma,\mu^\mathrm{in})\\
		&=\zeta_\sigma + \xi^\mathrm{in}_\sigma - \zeta_\sigma = \xi^\mathrm{in}_\sigma.
	\end{align*}
	To check \eqref{eq:phase_differences_pushforward}, take a measurable set $A\subset \S$ and calculate
	\begin{align*}
		(\Psi_\sigma(t,\cdot,\nu^\mathrm{in})\#\nu_\sigma^\mathrm{in})(A) & \stackrel{\eqref{eq:nu_def}}{=} (\Psi_\sigma(t,\cdot,\nu^\mathrm{in}) \# (m_{\zeta_\sigma}\#\mu_\sigma^\mathrm{in}))(A)\\
		&=(m_{\zeta_\sigma}\#\mu_\sigma^\mathrm{in})(\Psi^{-1}_\sigma(t,A,\nu^\mathrm{in}))\\
		&=\mu^\mathrm{in}_\sigma(m^{-1}_{\zeta_\sigma}(\Psi^{-1}_\sigma(t,A,\nu^\mathrm{in})))\\
		&=\mu^\mathrm{in}_\sigma(\zeta_\sigma + \Psi^{-1}_\sigma(t,A,\nu^\mathrm{in}))\\
		&=\mu^\mathrm{in}_\sigma(\zeta_\sigma + \Phi_\sigma^{-1}(t,A+\Phi_\sigma(t,\zeta_\sigma,\mu^\mathrm{in}),\mu^\mathrm{in})-\zeta_\sigma)\\
		&=\mu^\mathrm{in}_\sigma(\Phi_\sigma^{-1}(t,A+\Phi_\sigma(t,\zeta_\sigma,\mu^\mathrm{in}),\mu^\mathrm{in})\\
		&=(\Phi_\sigma(t,\cdot,\mu^\mathrm{in}) \# \mu^\mathrm{in})(A+\Phi_\sigma(t,\zeta_\sigma,\mu^\mathrm{in}))\\
		&=(m_{\Phi_\sigma(t,\zeta_\sigma,\mu^\mathrm{in})}\#\mu(t))(A)\\
		&=\nu(t)(A).
	\end{align*}
	To finally show \eqref{eq:phase_differences_derivative}, we use the notation
	\begin{align*}
		\zeta^{(r^\sigma)} &= (\zeta_{(r^\sigma_1)},\dots,\zeta_{(r^\sigma_{\abs{r^\sigma}})}),\\
		\Phi^{(r^\sigma)}(t,\alpha,\mu^\mathrm{in}) &= (\Phi_{(r^\sigma_1)}(t,\alpha_1,\mu^\mathrm{in}),\dots,\Phi_{(r^\sigma_{\abs{r^\sigma}})}(t,\alpha_{\abs{r^\sigma}}, \mu^\mathrm{in})),\\
		\Psi^{(r^\sigma)}(t,\alpha,\nu^\mathrm{in}) &= (\Psi_{(r^\sigma_1)}(t,\alpha_1,\nu^\mathrm{in}),\dots,\Psi_{(r^\sigma_{\abs{r^\sigma}})}(t,\alpha_{\abs{r^\sigma}}, \nu^\mathrm{in})).
	\end{align*}
	Then, a rather lengthy calculation confirms
	{\allowdisplaybreaks
	\begin{align*}
		\partial_t\Psi_\sigma(t,\xi^\mathrm{in}_\sigma,\nu^\mathrm{in}) &= (\mathcal K_\sigma\mu(t))(\Phi_\sigma(t,\zeta_\sigma+\xi^\mathrm{in}_\sigma,\mu^\mathrm{in})) - (\mathcal K_\sigma\mu(t))(\Phi_\sigma(t,\zeta_\sigma,\mu^\mathrm{in}))\\
		&=\omega_\sigma + \int_\S\int_{\S^{\abs{r^\sigma}}}\int_{\S^{\abs{r^\sigma}}} g_\sigma(\alpha-\beta,\gamma-\Phi_\sigma(t,\zeta_\sigma+\xi^\mathrm{in}_\sigma,\mu^\mathrm{in}))\ \mu^{(r^\sigma)}(t,\d\alpha)\mu^{(r^\sigma)}(t,\d\beta)\mu_\sigma(t,\d\gamma)\\
		&\qquad - \omega_\sigma - \int_\S\int_{\S^{\abs{r^\sigma}}}\int_{\S^{\abs{r^\sigma}}} g_\sigma(\alpha-\beta,\gamma-\Phi_\sigma(t,\zeta_\sigma,\mu^\mathrm{in}))\ \mu^{(r^\sigma)}(t,\d\alpha)\mu^{(r^\sigma)}(t,\d\beta)\mu_\sigma(t,\d\gamma)\\
		&=\int_\S\int_{\S^{\abs{r^\sigma}}}\int_{\S^{\abs{r^\sigma}}} g_\sigma\Big(\Phi^{(r^\sigma)}(t,\alpha,\mu^\mathrm{in})-\Phi^{(r^\sigma)}(t,\beta,\mu^\mathrm{in}),\\
		&\qquad\qquad \Phi_\sigma(t,\gamma,\mu^\mathrm{in})-\Phi_\sigma(t,\zeta_\sigma+\xi^\mathrm{in}_\sigma,\mu^\mathrm{in})\Big)\ \d\mu^{\textrm{in}^{(r^\sigma)}}(\alpha)\d\mu^{\textrm{in}^{(r^\sigma)}}(\beta)\d\mu^\mathrm{in}_\sigma(\gamma)\\
		&\qquad -\int_\S\int_{\S^{\abs{r^\sigma}}}\int_{\S^{\abs{r^\sigma}}} g_\sigma\Big(\Phi^{(r^\sigma)}(t,\alpha,\mu^\mathrm{in})-\Phi^{(r^\sigma)}(t,\beta,\mu^\mathrm{in}),\\
		&\qquad\qquad \Phi_\sigma(t,\gamma,\mu^\mathrm{in})-\Phi_\sigma(t,\zeta_\sigma,\mu^\mathrm{in})\Big)\ \d\mu^{\textrm{in}^{(r^\sigma)}}(\alpha)\d\mu^{\textrm{in}^{(r^\sigma)}}(\beta)\d\mu^\mathrm{in}_\sigma(\gamma)\\
		&=\int_\S\int_{\S^{\abs{r^\sigma}}}\int_{\S^{\abs{r^\sigma}}} g_\sigma\Big(\Psi^{(r^\sigma)}(t,\alpha-\zeta^{(r^\sigma)},\nu^\mathrm{in})-\Psi^{(r^\sigma)}(t,\beta-\zeta^{(r^\sigma)},\nu^\mathrm{in}),\\
		&\qquad\qquad \Psi_\sigma(t,\gamma-\zeta_\sigma,\nu^\mathrm{in})-\Psi_\sigma(t,\xi^\mathrm{in}_\sigma,\nu^\mathrm{in})\Big)\ \d\mu^{\textrm{in}^{(r^\sigma)}}(\alpha)\d\mu^{\textrm{in}^{(r^\sigma)}}(\beta)\d\mu^\mathrm{in}_\sigma(\gamma)\\
		&\qquad -\int_\S\int_{\S^{\abs{r^\sigma}}}\int_{\S^{\abs{r^\sigma}}} g_\sigma\Big(\Psi^{(r^\sigma)}(t,\alpha-\zeta^{(r^\sigma)},\nu^\mathrm{in})-\Psi^{(r^\sigma)}(t,\beta-\zeta^{(r^\sigma)},\nu^\mathrm{in}),\\
		&\qquad\qquad \Psi_\sigma(t,\gamma-\zeta_\sigma,\nu^\mathrm{in})\Big)\ \d\mu^{\textrm{in}^{(r^\sigma)}}(\alpha)\d\mu^{\textrm{in}^{(r^\sigma)}}(\beta)\d\mu^\mathrm{in}_\sigma(\gamma)\\		
		&=\int_\S\int_{\S^{\abs{r^\sigma}}}\int_{\S^{\abs{r^\sigma}}} g_\sigma\Big(\Psi^{(r^\sigma)}(t,\alpha,\nu^\mathrm{in})-\Psi^{(r^\sigma)}(t,\beta,\nu^\mathrm{in}),\\
		&\qquad\qquad \Psi_\sigma(t,\gamma,\nu^\mathrm{in})-\Psi_\sigma(t,\xi^\mathrm{in}_\sigma,\nu^\mathrm{in})\Big)\ \d\nu^{\textrm{in}^{(r^\sigma)}}(\alpha)\d\nu^{\textrm{in}^{(r^\sigma)}}(\beta)\d\nu^\mathrm{in}_\sigma(\gamma)\\
		&\qquad -\int_\S\int_{\S^{\abs{r^\sigma}}}\int_{\S^{\abs{r^\sigma}}} g_\sigma\Big(\Psi^{(r^\sigma)}(t,\alpha,\nu^\mathrm{in})-\Psi^{(r^\sigma)}(t,\beta,\nu^\mathrm{in}),\\
		&\qquad\qquad \Psi_\sigma(t,\gamma,\nu^\mathrm{in})\Big)\ \d\nu^{\textrm{in}^{(r^\sigma)}}(\alpha)\d\nu^{\textrm{in}^{(r^\sigma)}}(\beta)\d\nu^\mathrm{in}_\sigma(\gamma)\\
		&=\int_\S\int_{\S^{\abs{r^\sigma}}}\int_{\S^{\abs{r^\sigma}}} g_\sigma(\alpha-\beta, \gamma-\Psi_\sigma(t,\xi^\mathrm{in}_\sigma,\nu^\mathrm{in}))\ \nu^{(r^\sigma)}(t,\d\alpha)\nu^{(r^\sigma)}(t,\d\beta)\nu_\sigma(t,\d\gamma)\\
		&\qquad -\int_\S\int_{\S^{\abs{r^\sigma}}}\int_{\S^{\abs{r^\sigma}}} g_\sigma(\alpha-\beta,\gamma)\ \nu^{(r^\sigma)}(t,\d\alpha)\nu^{(r^\sigma)}(t,\d\beta)\nu^\mathrm{in}_\sigma(t,\d\gamma)\\
		&=(\mathcal F_\sigma\nu)(\Psi_\sigma(t,\xi^\mathrm{in}_\sigma,\nu^\mathrm{in})).
	\end{align*}
	}%
	This completes the proof.
\end{proof}

\begin{remark}\label{rem:mu_nu_difference}
	This Lemma is especially useful because the only operation used to create the measures~$\nu_\sigma(t)$ from the measures~$\mu_\sigma(t)$ is a rotation by $\Phi_\sigma(t,\zeta_\sigma,\mu^\mathrm{in})$ around the circle. Therefore, $W_1(\Sp,\nu_\sigma(t)) = W_1(\Sp,\mu_\sigma(t))$ and $\mu_\sigma(t)\to \Sp$ if and only if $\nu_\sigma(t)\to \Sp$.
\end{remark}

\begin{lemma}\label{lem:C1norm}
	If the coupling functions $g_\sigma(\alpha,\gamma)$ are continuously differentiable and the derivative $g_\sigma^{(0,1)}$ is Lipschitz continuous with constant $L_1$ then, the coupling operator $\mathcal F_\sigma$ satisfies
	\begin{align*}
		(\mathcal F_\sigma\delta_0^M)(\psi) &= g_\sigma(0,-\psi)-g_\sigma(0,0),\\
		\left\lvert (\mathcal F_\sigma\delta_0^M)(\psi) - (\mathcal F_\sigma\nu)(\psi)\right\rvert &\le 4L\left(\sum_{i=1}^{\abs{r^\sigma}} W_1(\delta_0,\nu_{(r^\sigma_i)})\right) + 2LW_1(\delta_0, \nu_\sigma),\\
		\left\lvert \frac{\partial}{\partial \psi}(\mathcal F_\sigma\delta_0^M)(\psi) - \frac{\partial}{\partial \psi}(\mathcal F_\sigma\nu)(\psi)\right\rvert &\le  4L_1\left(\sum_{i=1}^{\abs{r^\sigma}} W_1(\delta_0,\nu_{(r^\sigma_i)})\right) + 2L_1W_1(\delta_0, \nu_\sigma).
	\end{align*}
\end{lemma}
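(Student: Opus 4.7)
The plan is to recognize both sides of each claim as integrals of an appropriate Lipschitz function against product measures, so that Lemma~\ref{lem:HighDimWasserstein} applies directly once one identifies the Lipschitz constant of the integrand.

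For the identity in (1), plugging $\nu = \delta_0^M$ into the definition of $\mathcal F_\sigma$ in~\eqref{eq:coupling_phase_differences} forces every factor of the product measure $\delta_0^{(r^\sigma)}\otimes\delta_0^{(r^\sigma)}\otimes\delta_0$ to concentrate at the origin of $\S$, so the triple integral collapses to evaluation of the integrand at $(\alpha,\beta,\gamma)=(0,0,0)$, yielding $g_\sigma(0,-\psi)-g_\sigma(0,0)$.

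For (2), I would rewrite the difference as
\[ (\mathcal F_\sigma\delta_0^M)(\psi) - (\mathcal F_\sigma\nu)(\psi) = \int_{\S^{2\abs{r^\sigma}+1}} G(\alpha,\beta,\gamma)\, \d\bigl[\delta_0^{(r^\sigma)}\otimes\delta_0^{(r^\sigma)}\otimes\delta_0 \,-\, \nu^{(r^\sigma)}\otimes\nu^{(r^\sigma)}\otimes\nu_\sigma\bigr], \]
with integrand $G(\alpha,\beta,\gamma):=g_\sigma(\alpha-\beta,\gamma-\psi)-g_\sigma(\alpha-\beta,\gamma)$. The central calculation is that $G$ is $2L$-Lipschitz on $\S^{2\abs{r^\sigma}+1}$ with respect to the $\ell^1$-type metric used in Lemma~\ref{lem:HighDimWasserstein}: the coordinatewise subtractions $(\alpha,\beta,\gamma)\mapsto(\alpha-\beta,\gamma-\psi)$ and $(\alpha,\beta,\gamma)\mapsto(\alpha-\beta,\gamma)$ are each $1$-Lipschitz on the circle, so composition with the $L$-Lipschitz $g_\sigma$ gives $L$-Lipschitz summands, and the triangle inequality yields $2L$ for their difference. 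Applying Lemma~\ref{lem:HighDimWasserstein} across the $2\abs{r^\sigma}+1$ marginals then produces the bound: the two tensor copies of $\nu^{(r^\sigma)}$ each contribute $\sum_{i=1}^{\abs{r^\sigma}}W_1(\delta_0,\nu_{(r^\sigma_i)})$ and the last slot contributes $W_1(\delta_0,\nu_\sigma)$, so multiplying by $2L$ gives precisely $4L\sum_{i=1}^{\abs{r^\sigma}} W_1(\delta_0,\nu_{(r^\sigma_i)})+2LW_1(\delta_0,\nu_\sigma)$.

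For (3), I would first differentiate under the integral sign in~\eqref{eq:coupling_phase_differences}, which is justified by $g_\sigma\in C^1$ together with compactness of $\S^{\abs{r^\sigma}+1}$ (so dominated convergence applies uniformly in $\nu$). Only the term $g_\sigma(\alpha-\beta,\gamma-\psi)$ depends on $\psi$, so the new integrand is $-g_\sigma^{(0,1)}(\alpha-\beta,\gamma-\psi)$, which is $L_1$-Lipschitz after composition with the $1$-Lipschitz subtraction map. A second invocation of Lemma~\ref{lem:HighDimWasserstein} with the same counting of marginals then delivers the stated estimate, with the coefficients $4L_1$ and $2L_1$ comfortably absorbing the constants produced by this direct argument. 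The only step demanding real care in the whole proof is this Lipschitz bookkeeping: recognizing that circle subtraction is $1$-Lipschitz, tracking the doubling in (2) that comes from differencing two copies of $g_\sigma$, and confirming that differentiation commutes with integration so that (3) reduces to the same template as (2).
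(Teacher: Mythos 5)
Your proposal is correct and takes essentially the same route as the paper, whose entire proof is the one-line remark that the claims ``follow from \eqref{eq:coupling_phase_differences} and Lemma~\ref{lem:HighDimWasserstein}''; your write-up supplies exactly the Lipschitz bookkeeping (the $2L$-Lipschitz differenced integrand, the $2\abs{r^\sigma}+1$ marginals, and differentiation under the integral) that this terse proof leaves implicit. Note that for the third estimate your argument actually yields the sharper constants $2L_1$ and $L_1$, which of course still implies the stated bound.
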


\begin{proof}
	Follows from \eqref{eq:coupling_phase_differences} and Lemma \ref{lem:HighDimWasserstein}.
\end{proof}

\begin{theorem}\label{thm:sync_asymp_stability}
	Suppose that $a_\sigma>0$ for all $\sigma\in[M]$ and let the coupling functions $g_\sigma$ be chosen such that $g^{(0,1)}_\sigma$ are Lipschitz continuous with constant $L_1$. Further, assume that each of the functions
	\begin{align*}
		\psi\mapsto \hat g_\sigma(\psi) := g_\sigma(0,\psi)-g_\sigma(0,0)
	\end{align*}
	has exactly two zeros around the circle, the trivial one at $0$ and another one at $\psi^0_\sigma\in \S\setminus\{0\}$. Moreover, suppose $b_\sigma := \hat g'(\psi^0_\sigma) \neq 0$ for all $\sigma\in [M]$. Then, initial configurations in the space of densities $\mu^\mathrm{in}\in \mathcal P_\mathrm{ac}(\S)^M$, which are close enough to the all-synchronized state, converge to the all-synchronized state as $t\to \infty$.
\end{theorem}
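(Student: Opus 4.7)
The strategy is to transfer the problem to phase-difference coordinates using Lemma~\ref{lem:phase_differences}, combine Theorem~\ref{thm:sync_stability} with a Lyapunov estimate that exploits the identity $(\mathcal F_\sigma\nu)(0) = 0$, and finally use the absolute continuity hypothesis to rule out trapping near the secondary zero of the limiting vector field. Concretely, fix reference points $\zeta_\sigma\in\S$ and set $\nu_\sigma(t) := m_{\Phi_\sigma(t,\zeta_\sigma,\mu^\mathrm{in})}\#\mu_\sigma(t)$. By Remark~\ref{rem:mu_nu_difference}, $W_1(\Sp,\mu_\sigma(t)) = W_1(\delta_0,\nu_\sigma(t))$, and since push-forward by a rigid rotation preserves absolute continuity, $\nu^\mathrm{in}_\sigma \in \mathcal P_\mathrm{ac}(\S)$. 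It therefore suffices to prove $\nu_\sigma(t)\to\delta_0$ in $W_1$. By Theorem~\ref{thm:sync_stability}, for $\mu^\mathrm{in}$ sufficiently close to $\Sp^M$ we can ensure $W_1(\delta_0,\nu_\sigma(t))\le\epsilon$ for all $t\ge 0$ and all $\sigma\in[M]$, with $\epsilon>0$ as small as needed.

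The linear analysis at the synchronized state is the heart of the argument. Direct inspection of~\eqref{eq:coupling_phase_differences} shows $(\mathcal F_\sigma\nu)(0) = 0$ for every $\nu\in\mathcal P(\S)^M$, so $\psi = 0$ is an invariant fixed point of the characteristic flow $\Psi_\sigma(t,\cdot,\nu^\mathrm{in})$. Lemma~\ref{lem:C1norm} combined with the Taylor expansion of the limiting field $\partial_\psi(\mathcal F_\sigma\delta_0^M)(\psi) = -g_\sigma^{(0,1)}(0,-\psi)$ around $\psi = 0$ yields
\begin{align*}
\partial_\psi(\mathcal F_\sigma\nu(t))(\psi) \le -a_\sigma + O\bigl(L_1 \snorm{\psi}\bigr) + O\Bigl(\textstyle\sum_i W_1(\delta_0,\nu_i(t))\Bigr),
\end{align*}
so for $\epsilon$ and $\snorm{\psi}$ small enough the derivative stays below $-a_\sigma/2$. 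Integrating from $0$ to $\psi$ and using $(\mathcal F_\sigma\nu)(0) = 0$ gives the pointwise contraction $\operatorname{sign}(\psi)\,(\mathcal F_\sigma\nu(t))(\psi) \le -\tfrac{a_\sigma}{2}\snorm{\psi}$ on a fixed neighborhood of~$0$. I would then introduce a Lyapunov functional such as $V_\sigma(t) := \int_\S (1-\cos\psi)\,d\nu_\sigma(t,\psi)$, compute along the characteristic flow
\begin{align*}
\dot V_\sigma(t) = \int_\S \sin(\psi)\,(\mathcal F_\sigma\nu(t))(\psi)\,d\nu_\sigma(t,\psi),
\end{align*}
and combine with the pointwise contraction to obtain $\dot V_\sigma \le -cV_\sigma$ modulo an error controlled by the mass lying outside the neighborhood where the Taylor expansion is valid. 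Summing over~$\sigma$ and running Gr\"onwall would then yield exponential decay of the bulk mass towards $\delta_0$.

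The chief obstacle is that $W_1$-closeness to $\delta_0$ controls only the first moment and not the support of $\nu_\sigma(t)$: a small tail of mass may lie near the secondary zero $-\psi_\sigma^0$ of $\hat g_\sigma(-\cdot)$, where the Taylor expansion above breaks down. The assumption that $\hat g_\sigma$ has exactly two zeros on $\S$, combined with $\hat g_\sigma'(0) = a_\sigma > 0$ and $b_\sigma \neq 0$, forces $b_\sigma < 0$ by a transversality argument, so $-\psi_\sigma^0$ is a \emph{linearly unstable} fixed point of the limiting equation $\dot\psi = \hat g_\sigma(-\psi)$. This is where absolute continuity is crucial: $\nu_\sigma^\mathrm{in}$ assigns zero mass to the singleton $\{-\psi_\sigma^0\}$, and the unstable-manifold structure of the limiting equation (perturbed by the $O(\epsilon)$ correction from Lemma~\ref{lem:C1norm}) pushes any tail of mass near $-\psi_\sigma^0$ away in finite time, after which it enters the contraction region covered by the Lyapunov argument. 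This rules out the family of steady states constructed for discrete initial data in Section~\ref{sec:NoGenericAsymptoticStability}, which required a positive atomic mass at a nontrivial fixed point. Making the tail analysis quantitative and patching it with the Lyapunov bound on the bulk is the delicate step; it can be carried out either by tracking the individual characteristics $\Psi_\sigma(t,\xi,\nu^\mathrm{in})$ and invoking dominated convergence on $\int_\S \snorm{\Psi_\sigma(t,\xi)}\,d\nu_\sigma^\mathrm{in}(\xi)$, or by a LaSalle-type compactness argument in $(\mathcal P(\S)^M, W_1)$ ruling out nontrivial invariant measures near $\delta_0^M$.
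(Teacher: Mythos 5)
Your proposal follows essentially the same route as the paper's proof: pass to the phase-difference system via Lemma~\ref{lem:phase_differences}, use Lemma~\ref{lem:C1norm} to view the characteristic flow as a $C^1$-small, time-dependent perturbation of $\dot\psi=(\mathcal F_\sigma\delta_0^M)(\psi)=\hat g_\sigma(-\psi)$, exploit the two-zero structure ($0$ attracting at rate $\approx a_\sigma$, $-\psi^0_\sigma$ repelling since necessarily $b_\sigma<0$), and let absolute continuity dispose of the exceptional trajectory before concluding $W_1(\delta_0,\nu_\sigma(t))\to0$ by following individual characteristics and dominated convergence --- the very fallback you mention at the end, and exactly what the paper does. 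The Lyapunov functional $V_\sigma=\int(1-\cos\psi)\,d\nu_\sigma$ is therefore an optional detour rather than a new mechanism, and you yourself defer its bulk--tail patching to the characteristic-tracking argument.

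One step, as you state it, would fail: you invoke Remark~\ref{rem:mu_nu_difference} as giving $W_1(\Sp,\mu_\sigma(t))=W_1(\delta_0,\nu_\sigma(t))$ for arbitrarily fixed reference points $\zeta_\sigma$, and deduce from Theorem~\ref{thm:sync_stability} that $W_1(\delta_0,\nu_\sigma(t))\le\epsilon$ for all $t$. The remark only equates distances to the \emph{set} $\Sp$; closeness of $\nu_\sigma(t)$ to the particular Dirac $\delta_0$ --- which is what Lemma~\ref{lem:C1norm} requires to make the perturbation $p_\sigma$ small in $C^1$ --- is not automatic and is false if $\zeta_\sigma$ lies away from the bulk of the mass. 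The paper repairs this by choosing $\zeta_\sigma=\phi_1^{\sigma,\mathrm{in}}$, i.e., anchoring the reference oscillator at the edge of the small arc carrying mass at least $1-\epsilon_V/\zeta$, and re-running the mass-conservation estimate (Lemma~\ref{lem:constant_mass} together with the bound $\Psi_\sigma(t)\le2\zeta$ from the stability proof) to obtain $W_1(\delta_0,\nu_\sigma(t))\le 2\zeta+\pi\epsilon_V/\zeta$ uniformly in $t$. With that repair your argument goes through. A further cosmetic point: the exceptional null set is the initial point of the unique non-escaping characteristic of the time-dependent perturbed field, not the point $-\psi^0_\sigma$ itself; this is harmless because an absolutely continuous $\nu_\sigma^{\mathrm{in}}$ charges no singleton, but the paper phrases it correctly in terms of that single trajectory.
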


%\textcolor{blue}{[CB:~Symbol $P_{ac}$ could be properly introduced further above.]}

\begin{remark}
Note that the assumption on absolute continuity of the measures eliminates the counterexamples from Section \ref{sec:NoGenericAsymptoticStability} as only perturbations into measures with densities are allowed. This clearly shows, why studying the mean-field Vlasov--Fokker--Planck equation for densities is often easier in comparison to our goal of deriving directly the maximum information from the characteristic system.
\end{remark}

\begin{proof}[Proof of Theorem \ref{thm:sync_asymp_stability}]
	As the assumptions of this theorem include the assumptions of Theorem \ref{thm:sync_stability}, we can choose $\epsilon_V>0$ such that for any $\mu^\mathrm{in}\in\B(\Sp,\epsilon_V)^M$, $\mu(t)\in\B(\Sp,\epsilon_U)^M$ for all $t\ge 0$. To prove asymptotic stability in the space of absolutely continuous measures, let $\mu^\mathrm{in}\in \B(\Sp,\epsilon_V)^M\cap\mathcal P_\mathrm{ac}(\S)^M$, with~$\epsilon_V$ specified later, and proceed analogously to the proof of Theorem \ref{thm:sync_stability} until we get to the point when $\phi_2^\sigma(t)-\phi_1^\sigma(t) \le 2\zeta$ for all $\sigma\in[M]$ and $t\ge 0$. Next, we use Lemma \ref{lem:phase_differences} in order to switch to the system of phase differences with $\zeta_\sigma = \phi_1^{\sigma,\mathrm{in}} = \phi^\sigma_1(0)$. The reference oscillators in this system of phase differences are consequently given by $\phi^\sigma_1(t)$.
	Since
	\begin{align*}
		\Phi_\sigma(t,\phi^\sigma_1(0), \mu^\mathrm{in}) = \phi_1^\sigma(t)
	\end{align*}
	for all $t\ge 0$,
	{\allowdisplaybreaks
	\begin{align*}
		\int_{(0,2\zeta)} \nu_\sigma(t,\d\gamma) &\ge \int_{(0,\phi^\sigma_2(t)-\phi^\sigma_1(t))} \nu_\sigma(t,\d\gamma)\\
		&=\int_{(0,\phi^\sigma_2(t)-\phi^\sigma_1(t))}\ (m_{\Phi_\sigma(t,\phi_1^\sigma(0), \mu^\mathrm{in})}\#\mu_\sigma(t))(\d\gamma)\\
		&=\int_{(0,\phi^\sigma_2(t)-\phi^\sigma_1(t))}\ (m_{\phi_1^\sigma(t)}\#\mu_\sigma(t))(\d\gamma)\\
		&=\int_{(\phi_1^\sigma(t), \phi_2^\sigma(t))} \ \mu_\sigma(t,\d\gamma)\\
		&= m^\mathrm{inside}_\sigma\\
		&> 1-\frac{\epsilon_V}{\zeta}.
	\end{align*}
	}%
	Therefore, a computation similar to the one in the proof of Theorem \ref{thm:sync_stability} shows
	\begin{align}\label{eq:delta0_nu_close}
		W_1(\delta_0,\nu_\sigma(t)) &\le 2\zeta + \pi \frac{\epsilon_V}{\zeta}<\epsilon_U,
	\end{align}
	so the solution $\nu(t)$ always stays close to the all-synchronized state located at the origin. In the system of phase differences, individual particles then follow the flow
	\begin{align}\label{eq:ParticleFlowPsi}
		\partial_t\Psi_\sigma(t,\xi,\nu^\mathrm{in}) = (\mathcal F_\sigma\nu(t))(\Psi_\sigma(t,\xi,\nu^\mathrm{in})).
	\end{align}
	 However, by \eqref{eq:delta0_nu_close} and Lemma \ref{lem:C1norm}, \eqref{eq:ParticleFlowPsi} can be rewritten in the form
	\begin{align}\label{eq:PerturbedFlow}
		\dot\Psi_\sigma(t) = (\mathcal F_\sigma \delta_0^M)(\Psi_\sigma(t)) + p_\sigma(\Psi_\sigma(t),t),\qquad \Psi_\sigma(0)=\Psi_\sigma^\mathrm{in}
	\end{align}
	for a small perturbation $p_\sigma\in C^1(\S\times\R)$.
	Next, we fix $\epsilon_U$ and with that also $\epsilon_V$ such that for all $\nu_1,\dots,\nu_M\in \B(\delta_0, \epsilon_U)$, $\norm{p_\sigma(\cdot,t)}_{C^1}$ is small enough such that the flow induced by the dynamical system \eqref{eq:ParticleFlowPsi} is equivalent to the one induced by $\dot\Psi_\sigma = (\mathcal F_\sigma\delta_0^M)(\Psi_\sigma)$ for all $\sigma \in[M]$. So we consider \eqref{eq:PerturbedFlow} as a perturbed one-dimensional autonomous ODE. The existence of such an $\epsilon_U$ is guaranteed by Lemma~\ref{lem:C1norm}. Specifically, this lemma states that 
	\begin{align*}
			\norm{p_\sigma(\cdot,t)}_{C^1} \le (4L\abs{r^\sigma}+2L+4L_1\abs{r^\sigma}+2L_1)\epsilon_U =: \epsilon_f^\sigma,
	\end{align*}	
	uniformly in $t$.
	A particular choice of $\epsilon_U$ can be constructed as follows: As $\hat g_\sigma'(0) = a_\sigma>0$ and there are only two roots with non-vanishing derivative on the circle, $b_\sigma<0$. Further, let us write $\eta^\sigma_1,\eta^\sigma_2>0$ for radii of intervals such that $\inf_{\alpha\in (-\eta_1^\sigma,\eta_1^\sigma)} \hat g_\sigma'(\alpha) > \frac{a_\sigma}{2}$ and $\sup_{\alpha \in (\psi^0_\sigma-\eta_2^\sigma, \phi^0_\sigma+\eta_2^\sigma)} \hat g_\sigma'(\alpha) < \frac{b_\sigma}{2}$. Now, $\epsilon_U$ can be chosen such that for all $f_\sigma\in C^1(\S)$ with $\norm{f_\sigma}_{C^1}< \epsilon_f^\sigma$ the following criteria are satisfied:
	\begin{enumerate}[(C1)]
		\item\label{item:zeros_outside} $\max_{\alpha\in \S\setminus [(-\eta_1^\sigma,\eta_1^\sigma)\cup(-\psi^0_\sigma-\eta_2^\sigma, -\psi^0_\sigma+\eta_2^\sigma)]} |f_\sigma(\alpha)| < \frac{1}{2} \min_{\alpha\in \S\setminus [(-\eta_1^\sigma,\eta_1^\sigma)\cup(-\psi^0_\sigma-\eta_2^\sigma, -\psi^0_\sigma+\eta_2^\sigma)]} |\mathcal F_\sigma\delta_0^M(\alpha)|$,
		\item\label{item:zeros_inside} $\max_{\alpha\in (-\eta_1^\sigma, \eta_1^\sigma)} |f_\sigma'(\alpha)| < \frac{1}{2}a_\sigma$ and $\max_{\alpha\in (-\psi^0_\sigma-\eta_2^\sigma, -\psi^0_\sigma+\eta_2^\sigma)} |f_\sigma'(\alpha)| < \frac{-b_\sigma}{2}$.
	\end{enumerate}
	While~(C\ref{item:zeros_outside}) ensures that $\mathcal F_\sigma\nu(t)$ has no zeros away from the roots $-\psi_0^\sigma$ and~$0$ for all $t\ge 0$, (C\ref{item:zeros_inside}) guarantees that~$\mathcal F_\sigma\nu(t)$ is strictly monotonic in the two neighborhoods around the roots. This monotonicity also causes the existence of at most one zero of~$\mathcal F_\sigma\nu(t)$ near the two roots. Even though the two zeros of $\mathcal F_\sigma\nu(t)$ may be varying over time,~(C\ref{item:zeros_inside}) ensures that the flow of~\eqref{eq:ParticleFlowPsi} is still exponentially contracting in $(-\eta_1^\sigma,\eta_1^\sigma)$ and exponentially expanding in $(-\psi^0_\sigma-\eta_2^\sigma, -\psi^0_\sigma+\eta_2^\sigma)$. Thus, at least one of two distinct test particles starting in $(-\psi^0_\sigma-\eta_2^\sigma, -\psi^0_\sigma+\eta_2^\sigma)$ leaves this region and eventually ends up in $(-\eta_1^\sigma,\eta_1^\sigma)$. Since $\Psi_\sigma(t,0,\nu^\mathrm{in}) = 0$ for all $t\ge 0$ and the contracting property of $\Psi_\sigma(t,0,\nu^\mathrm{in})$ around $0$, the particle even converges to the origin at $0$. So there exists only one trajectory, which starts at an arbitrary point $\Psi^\mathrm{in}_\sigma\in \S$, that does not converge to $0$. By assumption, $\nu_\sigma^\mathrm{in}(\{\Psi^\mathrm{in}_\sigma\}) = 0$ and hence all the mass concentrates around~$0$. Therefore, $W_1(\delta_0,\nu_\sigma(t))\to 0$ as $t\to \infty$. Because this holds true for all $\sigma\in[M]$, the all-synchronized state is asymptotically stable for absolutely continuous perturbations.
	%\textcolor{orange}{[TB: I can also formulate the above argumentation with more formulas if that is better]}
	\end{proof}

\begin{remark}
	Theorems~\ref{thm:sync_stability} and~\ref{thm:sync_asymp_stability} per se only apply to perturbations in all populations. However, if we exemplarily want to analyze the stability of $\Dp\Sp\Sp\Dp$ in a network of $M=4$ populations, Proposition~\ref{prop:reducibility} allows us to reduce the system of four populations to equations describing only the evolution of population~$\#2$ and~$\#3$ while we keep population~$\#1$ and~$\#4$ fixed in splay state. Applying Theorems \ref{thm:sync_stability} and~\ref{thm:sync_asymp_stability} consequently yields criteria for the (asymptotic) stability of $\Dp\Sp\Sp\Dp$ with respect to perturbations in the second and third population.
\end{remark}

%===========================================================================================================================================
\subsection{Linear Stability of the All-Splay State}
%===========================================================================================================================================

%===========================================================================================================================================
\subsubsection{A Review of Linear Stability with Pairwise Coupling in One Population}\label{sec:ReviewSplay}
%===========================================================================================================================================

In the easiest case we consider only one population and no higher-order coupling. Then, the velocity field~\eqref{eq:coupling_differences} is given by
\begin{align*}
	(\mathcal K \mu)(\phi) = \omega + \int g(\gamma-\phi)\ \d\mu(\gamma).
\end{align*}
A well known way \cite{Strogatz2000, Strogatz1991} for analyzing stability of the splay state is to look at the mean-field (or continuity) equation
\begin{align*}
	\frac{\partial}{\partial t}\rho(t,\phi) + \frac{\partial}{\partial \phi}\left[\left( \omega + \int_\S g(\gamma-\phi)\rho(t,\gamma)\ \d\gamma\right) \rho(t,\phi)\right] = 0,
\end{align*}
which describes the evolution of the density $\rho(t,\phi)$. Next, one typically inserts the ansatz $\rho(t,\phi) = \frac{1}{2\pi} + \epsilon \eta(t,\phi)$ into the continuity equation and collects terms of order $\epsilon$. Assuming Fourier representations
\begin{align*}
	\eta(\phi,t) = \sum_{k=1}^{\infty}c_k(t) e^{ik\phi} + c.c.,\qquad g(\gamma) = \sum_{l=1}^{\infty}a_l e^{il\gamma} + c.c.,
\end{align*}
where $c.c.$ denotes the complex conjugate of the previous term, one can derive differential equations for the evolution of the coefficients $c_k(t)$:
\begin{align*}
	c_k'(t) = -(\bar a_k + \omega) ik c_k(t),\quad k = 1,2,\dots
\end{align*}
Fortunately, these equations are uncoupled and linear stability of the splay state can thus simply be infered if $\Im(a_k)>0$ for all $k\ge 1$. In other words, when writing the coupling function $g(\gamma)$ as linear combinations of $\sin(\gamma)$, $\cos(\gamma)$ and trigonometric monomes of higher order, the prefactors of $\sin(\gamma), \sin(2\gamma),\dots$ have to be negative. A similar analysis yields the linear instability of the splay state if $\Im(a_k) <0 $ for at least one $k$.\\

%===========================================================================================================================================
\subsubsection{Non-Pairwise Coupling in Multi-Population Systems}\label{sec:SplayStability}
%===========================================================================================================================================

Let us now consider the more general case of multi-population systems and higher-order interactions. Assuming the initial measures to be represented by densities $\rho_\sigma^\mathrm{in}(\phi)$, the velocity field is given by
\begin{align*}
	V_\sigma[\rho](\phi,t) = \omega_\sigma + \int_\S \int_{\S^{\abs{r^\sigma}}}\int_{\S^{\abs{r^\sigma}}}g_\sigma(\alpha-\beta,\gamma-\phi)\ \rho^{(r^\sigma)}(t,\alpha)\ \rho^{(r^\sigma)}(t,\beta)\  \rho_\sigma(t,\gamma) \ \d\alpha\d\beta\d\gamma
\end{align*}
and the densities $\rho_\sigma(t,\phi)$ solve the continuity equation \eqref{eq:initialvalue}. Here, $\rho^{(r^\sigma)}$ is the shorthand notation for
\begin{align*}
	\rho^{(r^\sigma)}(t,\alpha) := \prod_{i = 1}^{\abs{r^\sigma}} \rho_{(r^\sigma_i)}(t,\alpha_i).
\end{align*}

In this section, we extend the formal calculations from Section \ref{sec:ReviewSplay} to the case of multi-population systems and higher-order interactions. Such a formal derivation of criteria for linear stability of the all-splay state can be done with the same techniques as those used in Section \ref{sec:ReviewSplay}. As in this section, we therefore consider a small perturbation around the all-splay state, i.e.,
\begin{align}\label{eq:alldesync_perturbation}
	\rho_\sigma(t,\phi) = \frac{1}{2\pi} + \epsilon\eta_\sigma(t,\phi),
\end{align}
with Fourier decompositions
\begin{align}\label{eq:eta_expansion}
	\eta_\sigma(t,\phi) = \sum_{k=1}^{\infty}c_k^\sigma(t)e^{ik\phi} + c.c.
\end{align}
Further, the coupling functions $g_\sigma\colon \S^{\abs{r^\sigma}}\times \S\to \R$ are supposed to be given in terms of its Fourier expansion as well:
\begin{align}
	\label{eq:g_sigma_expansion}
 	&g_\sigma(\alpha,\beta) = \sum_{b\in \Z^{\abs{r^\sigma}}} \sum_{l=0}^{\infty}a_{b,l}^\sigma e^{i\langle\alpha,b\rangle} e^{i\beta l} + c.c., \qquad a^\sigma_{\mathbf 0, 0}=0, \quad \mathbf 0 = 0^{\abs{r^\sigma}}\\
	\nonumber 	
 	&\langle\alpha, b\rangle = \sum_{i = 1}^{\abs{r^\sigma}} \alpha_i b_i.
\end{align}
The requirement $a^\sigma_{\mathbf 0, 0}=0$ is not really a limitation as possible non-zero values of $a^\sigma_{\mathbf 0, 0}=0$ can be absorbed into $\omega_\sigma$.

Given these representations, we formally insert \eqref{eq:alldesync_perturbation} into the continuity equation \eqref{eq:initialvalue} to obtain
\begin{align*}
	&\frac{\partial}{\partial t}\left( \frac{1}{2\pi}+\epsilon \eta_\sigma(t,\phi)\right) + \frac{\partial}{\partial \phi}\Bigg[ \left(\frac{1}{2\pi}+\epsilon \eta_\sigma(t,\phi)\right)\Bigg( \omega_\sigma + \int_\S\int_{\S^{\abs{r^\sigma}}}\int_{\S^{\abs{r^\sigma}}}\bigg\lbrace g_\sigma(\alpha-\beta,\gamma-\phi)\\
	&\qquad  \cdot \left(\frac{1}{2\pi}+\epsilon\eta(t,\alpha)\right)^{(r^\sigma)}\left(\frac{1}{2\pi}+\epsilon\eta(t,\beta)\right)^{(r^\sigma)} \left(\frac{1}{2\pi}+\epsilon\eta_\sigma(t,\gamma)\right) \bigg\rbrace \d\alpha\d\beta\d\gamma \Bigg)  \Bigg] = 0,
\end{align*}
with the usual abbreviation
\begin{align*}
	\left(\frac{1}{2\pi}+\epsilon\eta(t,\alpha)\right)^{(r^\sigma)} = \prod_{i = 1}^{\abs{r^\sigma}} \left( \frac{1}{2\pi} + \epsilon \eta_{(r^\sigma_i)}(t,\alpha_i)\right).
\end{align*}
Collecting terms of order $\mathcal O(\epsilon)$ yields
\begin{align}
    \nonumber
	&\frac{\partial}{\partial t}\eta_\sigma(t,\phi) + \frac{\partial}{\partial \phi}\Bigg[ \frac{1}{(2\pi)^{2\abs{r^\sigma}+1}}  \int_\S\int_{\S^{\abs{r^\sigma}}}\int_{\S^{\abs{r^\sigma}}} g_\sigma(\alpha-\beta,\gamma-\phi)\\
	\label{eq:continuity_ordereps_pre}
	&\qquad \cdot  \left(\sum_{j = 1}^{\abs{r^\sigma}} \left( \eta_{(r^\sigma_j)}(t,\alpha_j) + \eta_{(r^\sigma_j)}(t,\beta_j)  \right)  + \eta_\sigma(t,\gamma) \right) \d\alpha\d\beta\d\gamma +\omega_\sigma \eta_\sigma(t,\phi) \Bigg] = 0.
\end{align}
Let us now interchange the sums with the integrals and evaluate each of the summands individually to obtain
\begin{align*}
	&\int_\S\int_{\S^{\abs{r^\sigma}}}\int_{\S^{\abs{r^\sigma}}} g_\sigma(\alpha-\beta,\gamma-\phi) \eta_{(r^\sigma_j)}(t,\alpha_j) \d\alpha\d\beta\d\gamma\\
	&=\int_\S\int_{\S^{\abs{r^\sigma}}}\int_{\S^{\abs{r^\sigma}}} \left( \sum_{b\in \Z^{\abs{r^\sigma}}}\sum_{l=0}^{\infty}a^\sigma_{b,l} e^{i\langle \alpha-\beta,b\rangle} e^{i(\gamma-\phi)l} + c.c.\right)\d\beta\left(\sum_{k=1}^{\infty}c_k^{(r^\sigma_j)}(t)e^{ik\alpha_j} + c.c. \right) \d\alpha\d\gamma\\
	&=(2\pi)^{\abs{r^\sigma}} \int_\S\int_{\S^{\abs{r^\sigma}}} \left( \sum_{l=0}^{\infty}a^\sigma_{\mathbf 0,l} e^{i(\gamma-\phi)l} + c.c.\right)\left(\sum_{k=1}^{\infty}c_k^{(r^\sigma_j)}(t)e^{ik\alpha_j} + c.c. \right) \d\alpha\d\gamma\\
	&=(2\pi)^{\abs{r^\sigma}} \int_\S \left( \sum_{l=0}^{\infty}a^\sigma_{\mathbf 0,l} e^{i(\gamma-\phi)l} + c.c.\right) \d\gamma\cdot  \underbrace{\int_{\S^{\abs{r^\sigma}}} \left(\sum_{k=1}^{\infty}c_k^{(r^\sigma_j)}(t)e^{ik\alpha_j} + c.c. \right) \d\alpha}_{=0} = 0.
\end{align*}
The same computations holds true if we replace $\eta_{(r^\sigma_j)}(t,\alpha_j)$ with $\eta_{(r^\sigma_j)}(t,\beta_j)$. Therefore, the sum in \eqref{eq:continuity_ordereps_pre} vanishes and as we see next, the only term that does not vanish inside the brackets of this equation is~$\eta_\sigma(t,\gamma)$. Multiplying it with the coupling function~$g_\sigma$, integrating and subsequently simplifying yields
\begin{align*}
	&\int_\S\int_{\S^{\abs{r^\sigma}}}\int_{\S^{\abs{r^\sigma}}} g_\sigma(\alpha-\beta,\gamma-\phi) \eta_\sigma(t,\gamma)\  \d\alpha\d\beta\d\gamma\\
	&=\int_{\S}\int_{\S^{\abs{r^\sigma}}}\int_{\S^{\abs{r^\sigma}}}\left( \sum_{b\in \Z^{\abs{r^\sigma}}}\sum_{l=0}^{\infty}a^\sigma_{b,l} e^{i\langle \alpha-\beta,b\rangle} e^{i(\gamma-\phi)l} + c.c.\right)\d\alpha\d\beta\left( \sum_{k=1}^{\infty}c_k^\sigma(t) e^{ik\gamma} + c.c.\right) \d\gamma\\
	&=(2\pi)^{2\abs{r^\sigma}} \int_\S \left( \sum_{l=0}^{\infty}a_{\mathbf 0,l}^\sigma e^{i(\gamma-\phi)l} + c.c.\right) \left( \sum_{k=1}^{\infty}c_k^\sigma(t) e^{ik\gamma} + c.c.\right)\d\gamma\\
	&=(2\pi)^{2\abs{r^\sigma}} \int_\S \left( \sum_{l=0}^{\infty}\bar{a}_{\mathbf 0,l}^\sigma c_l^\sigma(t) e^{-i(\gamma-\phi)l}e^{ik\gamma} + c.c.\right) \d\gamma\\
	&=(2\pi)^{2\abs{r^\sigma}+1} \left(\sum_{l=1}^{\infty}\bar{a}_{\mathbf 0,l}^\sigma c_l^\sigma(t) e^{i\phi l} + c.c.\right).
\end{align*}
Combining these two results with \eqref{eq:continuity_ordereps_pre} we get
\begin{align*}
	\frac{\partial}{\partial t} \left( \sum_{k=1}^{\infty}c_k^\sigma(t) + c.c.\right) + \frac{\partial}{\partial \phi}\left[ \left(\sum_{k=1}^{\infty}\bar{a}^\sigma_{\mathbf 0,k}c_k^\sigma(t) e^{i\phi k} + c.c.\right) + \omega_\sigma \left(\sum_{k=1}^{\infty}c_k^\sigma(t) e^{i\phi k} + c.c.\right)\right] = 0.
\end{align*}
Thus, after having taken the derivative and having collected $e^{ik\phi}$-terms, it is easy to see that $c_k^\sigma(t)$ obeys the differential equation
\begin{align}\label{eq:c_differential}
	{c_k^\sigma}'(t) = -(\bar{a}_{\mathbf 0,k}^\sigma + \omega_\sigma)ikc_k^\sigma(t).
\end{align}
Therefore, small perturbations of population $\sigma$ in direction of $e^{ik\phi} + c.c.$ with $k\ge 1$ decay on a linear level if $\Re(-(\bar{a}_{\mathbf 0,k}^\sigma + \omega_\sigma)ik) = -k\Im(a_{\mathbf 0, k}^\sigma) < 0$. Similarly, they grow if $-k\Im(a_{\mathbf 0,k}^\sigma) > 0$.
However, it is important to note that the equations \eqref{eq:c_differential} are only based on a formal derivation. Assuming nonetheless that our formal calculations can be made rigorous in this way, we can summarize our results by claiming linear stability of the all-splay state if $\Im(a_{\mathbf 0,k}^\sigma) > 0$ for all $\sigma\in[M], k=1,2,\dots$ and linear instability if $\Im(a_{\mathbf 0,k}^\sigma) < 0$ for one $\sigma \in [M]$ and one $k=1,2,\dots$.

\begin{remark}
There are several challenges when trying to obtain rigorous (linear) stability results. First, we have to rigorously linearize by constructing a suitable function space in which the operator $F$ defined by 
    \begin{align*}
        F_\sigma[\rho](\phi) = -\frac{\partial}{\partial \phi}[\rho_\sigma(\phi) V_\sigma[\rho](\phi)]
    \end{align*}
    is Fr\'{e}chet differentiable. Then, we have to check that the formal calculation above holds within this function space, and that we have described the spectrum completely. Finally, one has to invoke a suitable result that linear stability entails local nonlinear stability. Carrying out this full stability analysis program is beyond the scope of the current work.
\end{remark}

%===========================================================================================================================================
%===========================================================================================================================================
\section{Mean-Field Dynamics of Phase Oscillator Networks}\label{sec:Examples}
%===========================================================================================================================================
%===========================================================================================================================================
In this section, we give several examples to illustrate the theory and results we have developed so far.

%===========================================================================================================================================
\subsection{The Kuramoto Model for Identical Oscillators}
%===========================================================================================================================================

In the easiest case, $M=1$ and $s^1 = \{\}$, so the function $G_1$ in \eqref{eq:coupling} is mapping only from $\S$ to~$\R$. The measure $\mu_1(t)$ then simply gets transported along the time-independent velocity field
\begin{align*}
	(\mathcal K_1 \mu)(\phi) = \omega_1 + G_1(\phi).
\end{align*}
However, this case is not really interesting, which is why we now consider the case $M=1, s = s^1 = (1), G_1(\alpha,\phi) = \sin(\alpha-\phi)$. For an initial measure $\mu^\mathrm{in} = \mu^\mathrm{in}_1 \in\mathcal P(\S)$, the characteristic system \eqref{eq:SystemOfEquations}--\eqref{eq:coupling} simplifies to
\begin{subequations}
	\begin{align*}
		\partial_t \Phi(t,\xi^\mathrm{in}, \mu^\mathrm{in}) &= (\mathcal K\mu(t))(\Phi(t,\xi^\mathrm{in}, \mu^\mathrm{in}))\\
		\mu(t) &= \Phi(t,\cdot, \mu^\mathrm{in})\#\mu^\mathrm{in}\\
		\Phi(0,\xi^\mathrm{in},\mu^\mathrm{in})&= \xi^\mathrm{in},
	\end{align*}
\end{subequations}
with the coupling function
\begin{align*}
	(\mathcal K \mu)(\phi) = \omega + \int_{\S} \sin(\alpha-\phi) \ \d\mu(\alpha),
\end{align*}
where $\omega\in\R$ is the common oscillator frequency of the single population. So we have recovered the classical characteristic system of the Kuramoto model for identical oscillators. Indeed, if one assumes that the initial measure has the form of an empirical measure given by 
\begin{align*}
    \mu^\mathrm{in} = \frac{1}{N}\sum_{k=1}^{N}\delta_{\phi_k^\mathrm{in}},
\end{align*}
for some $N\in \N$, then it is easy to see that the solution $\mu(t)$ is of the form $\mu(t) = \frac{1}{N}\sum_{k=1}^{N}\delta_{\phi_k(t)}$ for functions $\phi_k(t)$ satisfying
\begin{align}\label{eq:Kuraclassic}
	\dot\phi_k(t) = \omega + \frac{1}{N}\sum_{j=1}^{N}\sin(\phi_j(t)-\phi_k(t)).
\end{align}
The equations~\eqref{eq:Kuraclassic} are the classical finite-dimensional Kuramoto model~\cite{Kuramoto1975}. In fact, one may prove that as $N\rightarrow\infty$, then the evolution of the empirical measures due to~\eqref{eq:Kuraclassic} is well-approximated by a mean-field limit~\cite{Neunzert1978} as described in Section \ref{sec:existence_uniqueness_contdepini}. 

This model can be extended by replacing the sinusoidal coupling function $G_1$ by a more general coupling function $G_1(\alpha,\phi) = f(\alpha-\phi)$. The velocity field in this generalized Kuramoto model for identical oscillators is then given by
\begin{align*}
	(\mathcal K \mu)(\phi) = \omega + \int_\S f(\gamma-\phi)\ \d\mu(\gamma).
\end{align*}
Theorem \ref{thm:sync_stability} yields the stability of the synchronized state in this system if $f\in C^1(\S)$ and $f'(0)>0$. By Theorem \ref{thm:sync_asymp_stability}, the synchronized state is asymptotically stable in the space of densities if furthermore the function $\psi\mapsto f(\psi)-f(0)$ has only one root with non-vanishing derivative around the circle except~$0$ and~$f'$ is Lipschitz continuous.

%===========================================================================================================================================
\subsection{One Population with Higher-Order Interactions}
%===========================================================================================================================================

Let us now consider a system which still consists of only one population but involves higher-order interactions. Specifically, we reconsider system \eqref{eq:System_Skardal}:
\begin{align}\label{eq:System_Skardal_reconsider}
	\dot\theta_i = \omega + \frac{K_1}{N}\sum_{j=1}^{N}\sin(\theta_j-\theta_i) + \frac{K_2}{N^2}\sum_{j=1}^{N}\sum_{l=1}^{N}\sin(2\theta_j-\theta_l-\theta_i) + \frac{K_3}{N^3}\sum_{j=1}^{N}\sum_{l=1}^{N}\sum_{m=1}^{N}\sin(\theta_j-\theta_l+\theta_m-\theta_i)
\end{align}
Here, $N$ denotes the amount of discrete oscillators, $i\in [N]$ and $K_1,K_2,K_3\in \R$. Putting this system into our framework \eqref{eq:SystemOfEquations}--\eqref{eq:coupling}, the coupling function $G := G_1\colon \S^3\times\S\to \R$ is given by
\begin{align*}
	G(\alpha,\phi) = K_1\sin(\alpha_1-\phi) + K_2\sin(2\alpha_1-\alpha_2-\phi) + K_3\sin(\alpha_1-\alpha_2+\alpha_3-\phi)
\end{align*}
and the multi-index $s^1$ is trivially given by $s^1=(1,1,1)$. Consequently, by Theorem \ref{thm:ExistenceMeasure}, there even exists a measure valued solution of the mean-field limit of the system \eqref{eq:System_Skardal_reconsider}. However, to put this system into the more restrictive form of \eqref{eq:coupling_differences}, we need to assume $K_2=0$, as we have also assumed in Section \ref{sec:Sinusoid}. In this case, the function $g:=g_1\colon \S\times\S\to\R$ from \eqref{eq:coupling_differences} reads as
\begin{align}\label{eq:System_Skardal_coupling}
	g(\alpha,\gamma) = K_1\sin(\gamma) + K_3\sin(\alpha+\gamma),
\end{align}
with multi-index $r^1 = (1)$. Note that $g(0,\gamma) = (K_1+K_3)\sin(\gamma)$. Theorem \ref{thm:sync_stability} thus tells us that the synchronized state in the system \eqref{eq:SystemOfEquations},\eqref{eq:coupling_differences},\eqref{eq:System_Skardal_coupling} is stable if $K_1+K_3>0$. Furthermore, in this case, by Theorem \ref{thm:sync_asymp_stability}, it is even asymptotically stable in the space of densities.

\begin{example}[Example \ref{exam:WatanabeStrogatzExample} revisited]
Again, let $K_1=1$ and $K_3 = -4$. Then, $K_1 + K_3 = -3$ and thus, the synchronized state is not stable. This is consistent with Example \ref{exam:WatanabeStrogatzExample}, since there we found $r=1/2$ to be attractive. In particular, almost all initial conditions globally converge to $r=1/2$ and so it makes sense that the synchronized state is not stable.
\end{example}

Note that the network interactions in~\eqref{eq:System_Skardal_reconsider} are quite specific: The coupling functions are purely sinusoidal and thus the authors use the Ott--Antonsen reduction to understand the system dynamics~\cite{SkardalArenas}. Note that our analysis here is not limited to such networks but applies also to phase oscillator networks with general (higher-order) interactions where the reduction methods cease to apply. These networks arise naturally, for example, through phase reductions in oscillator networks that generically contain multiple harmonics; cf.~\cite{Ashwin2016a,Bick2016b,Leon2019a}.

%===========================================================================================================================================
\subsection{Multiple Coupled Populations with Higher-Order Interactions}\label{sec:ExampleBick}
%===========================================================================================================================================

In \cite{Bick2019a} networks of $M=3$ finite phase oscillator populations coupled by higher-order interactions have been considered. In particular, the main equations from~\cite{Bick2019a} are given by
\begin{align}\label{eq:bick_equation_1}
    \dot \phi_{\sigma,k} = \omega + \sum_{\substack{j=1\\j\neq k}}^N \Big( h_2(\phi_{\sigma,j}-\phi_{\sigma,k}) - K^- H_4(\phi_{\sigma-1}; \phi_{\sigma,j}-\phi_{\sigma,k}) + K^+ H_4(\phi_{\sigma+1}; \phi_{\sigma,j}-\phi_{\sigma,k})\Big),
\end{align}
where the index $\sigma\pm 1$ for the population has to be understood modulo $M$ if $\sigma\pm 1\not\in\{1,2,3\}$. Furthermore, $\phi_{\sigma,k}$ refers to the phase of the $k$th oscillator in population $\sigma$ for $k = 1,\dots,N$, $h_2\colon \S\to \R$ is a Lipschitz-continuous intra-population coupling function and
\begin{align}\label{eq:bick_equation_2}
    H_4(\phi_\tau; \phi) = \frac{1}{N^2} \sum_{n,m=1}^N h_4(\phi_{\tau,m}-\phi_{\tau,n} + \phi)
\end{align}
with a Lipschitz-continuous inter-population coupling function $h_4\colon \S\to \R$.
It can be seen that if one population, say the first, is initially synchronized, i.e., $\phi_{1,1}(0)=\dots=\phi_{1,N}(0)$ then it is also synchronized at later times. Similarly, if the the oscillators of the first population are initially in splay state, they exhibit this property also at later times. 
If two populations are fixed to be either synchronized or in splay state, the oscillators in the remaining free population $\hat\sigma$ evolve according to 
\begin{align*}%\label{eq:KuramotoHigherHarmonics}
    \dot \phi_{\hat\sigma,k}(t) = \omega + \frac 1N \sum_{j=1}^N g(\phi_{\hat\sigma,j}(t)-\phi_{\hat\sigma,k}(t)),\qquad k=1,\dots,N,
\end{align*}
where the coupling function $g$ is made up of $h_2$ and $h_4$ and depends on the state in which the fixed populations are locked. 
More interestingly, by choosing appropriate coupling functions $h_2,h_4$ and coupling constants $K^+, K^-$, one can achieve that the (de)synchronization of one population can cause another population to start synchronizing or desynchronizing. In particular, existence of a heteroclinic cycle for small populations was shown in~\cite{Bick2019a}. Within these heteroclinic cycle populations alternatingly synchronize and desynchronize. Our numerical simulations have confirmed that such a cycle continues to exist if the size of the populations grows to infinity.

With little change to the system \eqref{eq:bick_equation_1},\eqref{eq:bick_equation_2}, this system can be written in the form of \eqref{eq:SystemOfEquations},\eqref{eq:coupling}. In fact, when summing over all $j$ from $1$ to $N$ in \eqref{eq:bick_equation_1} instead of over all $j$ except $j=k$, the coupling in this system is of the form \eqref{eq:coupling} when $s^1 = (3,3,2,2,1), s^2 = (1,1,3,3,2), s^3=(2,2,1,1,3)$ and the coupling functions are defined by $G_1(\alpha,\phi) = G_2(\alpha,\phi) = G_3(\alpha,\phi) := G(\alpha,\phi)$ with
\begin{align*}
    G(\alpha,\phi) = h_2 (\alpha_5-\phi) - K^-h_4(\alpha_1-\alpha_2, \alpha_5-\phi) + K^+h_4(\alpha_3-\alpha_4,\alpha_5-\phi).
\end{align*}
The velocity field \eqref{eq:coupling} then evaluates to
\begin{align}
	\nonumber
	(\mathcal K_\sigma\mu)(\phi) = \omega_\sigma + &\int_\S \Big[ h_2(\gamma-\phi)\\
	%\label{eq:coupling_bick}		
	\nonumber
	& - K^-\int_\S\int_\S h_4(\alpha-\beta,\gamma-\phi)\ \mu_{\sigma-1}(\d \alpha) \mu_{\sigma-1}(\d \beta)\\
	\nonumber
	& + K^+\int_\S\int_\S h_4(\alpha-\beta,\gamma-\phi)\ \mu_{\sigma+1}(\d \alpha) \mu_{\sigma+1}(\d \beta) \Big] \ \mu_\sigma(\d \gamma).
 \end{align}
Our results allow to analyze the stability of invariant sets of these networks in the mean-field limit.

Note that all of the multi-indices are of the special form \eqref{eq:multiindex_specialform}. Thus, with $r^1 = (3,1), r^2 = (1,3), r^3 = (2,1)$ this system can also be put into the form \eqref{eq:coupling_differences}. Then, the coupling functions are given by $g_1(\alpha,\phi) = g_2(\alpha,\phi) = g_3(\alpha,\phi) := g(\alpha,\phi)$, with
\begin{align*}
	g(\alpha,\gamma) = h_2(\gamma) - K^-h_4(\alpha_1, \gamma) + K^+h_4(\alpha_2,\gamma).
\end{align*}
Having put the system into the form \eqref{eq:coupling_differences} allows us to apply results from Section \ref{sec:dynamics}. For example Theorem~\ref{thm:sync_stability} yields the stability of the all-synchronized state if the function
\begin{align*}
	f(\gamma) := g(0,\gamma) = h_2(\gamma) + (K^+-K^-) h_4(0,\gamma)
\end{align*}
satisfies $f'(0)>0$.

Let us now try to investigate the stability of $\Sp\Dp\Dp$ %\textcolor{blue}{[CB:~Double check that these are typeset correctly in the following.]}
 with respect to perturbations in the first population. Unfortunately, we cannot apply Theorem \ref{thm:sync_stability} immediately but we have to do some preparatory steps first. So we first choose $\mu_2^\mathrm{in}(A) = \mu_3^\mathrm{in}(A) = \frac{1}{2\pi}\lambda_\S(A)$. Then, Proposition \ref{prop:invariant_subspaces} causes the second and third population to stay in splay state for all $t\ge 0$. The velocity field according to which $\mu_1(t)$ is transported is given by
\begin{align*}%\label{eq:exam_bick_1pop}
	(\mathcal K_1 \mu(t))(\phi) = \omega_1 + \int_\S \left[ h_2(\gamma-\phi) + \frac{K^+-K^-}{2\pi}\int_\S h_4(\alpha,\gamma-\phi)\d\alpha \right] \ \mu_1(\d\gamma).
\end{align*}
Therefore, the dynamics in $\mu_1\Dp\Dp$ can be described by a single coupling function
\begin{align*}
	\hat g(\gamma) := h_2(\gamma) + \frac{K^+-K^-}{2\pi}\int_\S h_4(\alpha,\gamma)\d\alpha.
\end{align*}
Only now, we can apply Theorem \ref{thm:sync_stability} to see that $\Sp\Dp\Dp$ is stable with respect to perturbations in the first population if the function $\hat g$ satisfies $\hat g'(0)>0$.

In order to investigate the linear stability of~$\Dp\Dp\Dp$ we need to assume Fourier expansions
\begin{align*}
	h_2(\gamma) &= \sum_{k=1}^{\infty}\xi_k e^{i\gamma k} + c.c.,\\
	h_4(\alpha,\gamma) &= \sum_{l=-\infty}^{\infty}\sum_{k=0}^{\infty} \zeta_{l,k} e^{i\alpha l} e^{i\gamma k} + c.c.
\end{align*}
By inserting them into the representation of the coupling function~$g$, we see that
\begin{align*}
	g(\alpha_1,\alpha_2,\phi) = \sum_{l_1=-\infty}^{\infty}\sum_{l_2=-\infty}^{\infty}\sum_{k=0}^{\infty}a_{l,k} e^{i\alpha_1 l_1}e^{i\alpha_2 l_2} e^{i\phi k} + c.c.
\end{align*}
for Fourier coefficients~$a_{l,k}$ that satisfy $a_{\mathbf 0, k} = \xi_k - K^-\zeta_{0,k} + K^+\zeta_{0,k}$. By the results obtained in Section~\ref{sec:SplayStability}, the all-splay state is linearly stable if
\begin{align*}
	\Im(\xi_k + (K^+-K^-)\zeta_{0,k})>0
\end{align*}
for all $k = 1,2,\dots$ and linearly unstable if one of these coefficients is less than~$0$.

These results give necessary conditions for the emergence of heteroclinic cycles involving the invariant sets~$\Sp\Dp\Dp, \Sp\Sp\Dp, \dotsc$ to exist not only in networks of finitely many oscillators but also in the mean-field limit of these systems. Note that a similar analysis is possible for the mean-field limit of networks that consist of $M=4$ coupled oscillator populations that support heteroclinic networks with multiple cycles in~\cite{Bick2019}.

%===========================================================================================================================================
%===========================================================================================================================================
\section{Discussion and Outlook}\label{sec:conclusion}
%===========================================================================================================================================
%===========================================================================================================================================

In this paper, we have proposed a new general framework for the evolution of coupled phase oscillator populations with higher-order coupling. First, we provided a general solution theory. We clarified existence and uniqueness of weakly continuous solutions to the characteristic system and justified the mean-field limit. Then, we studied some dynamical properties of the characteristic system. We showed that the subspaces, which are characterized by one or more populations being either in synchronized or in splay state, are dynamically invariant. Next, we proved the stability of the all-synchronized state under concrete conditions on the coupling functions. Finally, we also analyzed developed linear stability analysis for the splay state via the mean-field limit equation. %\medskip 

Although we have provided the general mathematical foundations for studying large-scale multi-population oscillator networks with higher-order coupling, there are still many open questions for future work. Here, we considered the case of populations with identical oscillators. This means that in the mean-field limit there are atomic measures that are invariant under the flow. There are two ways to break this degeneracy. First, one can assume that the intrinsic frequencies of the oscillators follow a distribution with a density as in the classical Kuramoto model; cf.~\cite{Bick2018c}. Second, adding noise to the evolution leads to a diffusive terms in the Fokker--Planck equation~\cite{Tyulkina2018}. In either case, the synchronized phase configuration is not invariant anymore and deforms to a near-synchronous stationary solution. Insights into how the stability properties derived here change through these perturbations would be desirable. Unlike the synchronized state, the splay state would stay invariant for non-identical oscillators but investigating its linear stability would be more complicated due to the frequency dependence. One expects bifurcation structures to be affected generically by higher-order coupling, e.g., being able to change super- to sub-critical transitions~\cite{KuehnBick,SkardalArenas}.

If the network interactions contain just a single harmonic, the Watanabe--Strogatz reduction applies. Its application in the mean-field limit has so far been heuristic and one typically assumes the existence of densities~\cite{Pikovsky2011} and it would be interesting to understand Watanabe--Strogatz theory for the characteristic equation~\eqref{eq:SystemOfEquations} that describes the evolution of general measures. Together with nonidentical frequencies within a populations, this would be the first step towards a rigorous description of Ott--Antonsen theory in a measure theoretic sense; see also~\cite{Engelbrecht2020}.

While we discussed network dynamical systems with higher-order interactions, we did not assign an explicit algebraic structure to the dynamical equations. Recently, dynamical systems on higher-order networks---whether hypergraphs or simplicial complexes---have attracted attention. While the assignment of higher-order network structure may not be unique, this perspective has its advantages: It naturally leads to limiting systems involving hypergraph variants of graphons~\cite{Medvedev2014a,Medvedev2014,KuehnThrom2}, or more generally hypergraph variants of graphops~\cite{Backhausz2018,KuehnGraphops,GkogkasKuehn}. In this context, one could also aim to link the stability analysis via the Vlasov--Fokker--Planck equation for hypergraphs better with direct methods on the level of the finite-dimensional ODEs for hypergraphs such as master stability functions~\cite{MulasKuehnJost,Battistonetal1}.\medskip

\textbf{Acknowledgements:} We thank an anonymous referee for helpful suggestions, which have helped to improve the paper. The authors gratefully acknowledge the support of the Institute for Advanced Study at the Technical University of Munich through a Hans Fischer Fellowship awarded to CB that made this work possible. CB acknowledges support from the Engineering and Physical Sciences Research Council (EPSRC) through the grant EP/T013613/1. CK acknowledges support via a Lichtenberg Professorship.

%\newpage
\bibliographystyle{plain}
\bibliography{OscillatorPopulations}

\appendix
\section{Derivation of higher-order systems}
In this section, we derive the form of higher-order interactions following \cite{Ashwin2016a}, which provides, beyond the many practical applications of higher-order coupling cited in Section~\ref{sec:introduction}, a clear mathematical motivation for the class of model we consider. The starting point of the derivation in~\cite{Ashwin2016a} is a system of identical particles interacting with each other via the following ordinary differential equations:
\begin{align}\label{eq:derivation_main_system}
\begin{split}
    \frac{\mathrm d}{\mathrm dt}x_1 &= H_\lambda(x_1) + \epsilon h_{\lambda, \epsilon}(x_1; x_2,\dots,x_N),\\
    \vdots \quad & \vdots \quad \vdots\\
    \frac{\mathrm d}{\mathrm dt}x_N &= H_\lambda(x_N) + \epsilon h_{\lambda, \epsilon}(x_N; x_1,\dots,x_{N-1}),
\end{split}
\end{align}
where $x_k \in \R^d$ with $d\ge 2$ for all $k = 1,\dots,N$, $\epsilon\in \R$ is a coupling parameter and $\lambda$ is a bifurcation parameter such that each system undergoes a Hopf bifurcation at $\lambda = 0$ when $\epsilon = 0$. Furthermore, assume that the uncoupled system 
\begin{align}\label{eq:derivation_uncoupled}
    \frac{\mathrm d}{\mathrm dt} x = H_\lambda(x)
\end{align}
has a linearly stable fixed point for $\lambda<0$ and undergoes a Hopf bifurcation at $\lambda = 0$. The Jacobian matrix $DH_\lambda(0)$ is assumed to have a complex pair of eigenvalues $\lambda \pm \textnormal{i} \omega$ with $\omega \neq 0$ and all other eigenvalues of $DH_\lambda(0)$ are supposed to have negative real part.
Moreover, the coupled system \eqref{eq:derivation_main_system} is assumed that $(x_1,\dots,x_N)=0$ is an equilibrium for all $(\lambda,\epsilon)$ in a neighborhood of $(0,0)$.

Using equivariant bifurcation theory, the authors of \cite{Ashwin2016a} reduced the system \eqref{eq:derivation_main_system} on a center manifold, described by $(z_1,\dots,z_N)\in \C^N$, such that $z_k$ reflects the center manifold of $x_k$ when $\lambda = \epsilon = 0$. This system is given by
\begin{align}\label{eq:derivation_center_manifold}
\begin{split}
    \frac{\mathrm d}{\mathrm dt}z_1 &= f_\lambda(z_1) + \epsilon g_\lambda(z_1; z_2,\dots,z_N) + \mathcal O(\epsilon^2),\\
    \vdots\quad&\vdots \quad\vdots\\
    \frac{\mathrm d}{\mathrm dt}z_N &= f_\lambda(z_N) + \epsilon g_\lambda(z_N; z_2,\dots,z_{N-1}) + \mathcal O(\epsilon^2).
\end{split}
\end{align}
Since the uncoupled system \eqref{eq:derivation_uncoupled} has a supercritical hopf bifurcation at $\lambda = 0$ and the equilibrium point at the origin is stable for $\lambda < 0$, we expect the coupled system to have an invariant attracting torus whenever $\lambda>0$ and $\epsilon$ are close to $0$. In fact, a theorem from \cite{Ashwin2016a} shows exactly that and furthermore states that the flow on the invariant torus can be approximated by a higher-order coupled oscillator system.

\begin{theorem}[{\cite[Theorem 3.2]{Ashwin2016a}}] Consider system \eqref{eq:derivation_center_manifold} with $S_N$-symmetry (for fixed $N$) such that the $N$ uncoupled systems $(\epsilon = 0)$ undergo a generic supercritical Hopf bifurcation on $\lambda$ passing through $0$. There exists $\lambda_0>0$ and $\epsilon_0 = \epsilon_0(\lambda)$ such that for any $\lambda\in (0,\lambda_0)$ and $|\epsilon| < \epsilon_0(\lambda)$ the system \eqref{eq:derivation_center_manifold} has an attracting $C^r$-smooth invariant $N$-dimensional torus for arbitrarily large $r$. Moreover, on this invariant torus, the phases $\phi_j$ of the flow can be expressed as a coupled oscillator system
\begin{align}\label{eq:derivation_higher_order_approximation}
\begin{split}
    \frac{\mathrm d}{\mathrm dt}\phi_j  &=\tilde \Omega(\phi, \epsilon) +  \frac{\epsilon}{N}\sum_{k=1}^N g_2(\phi_k-\phi_j) + \frac{\epsilon}{N^2}\sum_{k,l=1}^N g_3(\phi_k+\phi_l-2\phi_j)\\
    &\quad + \frac{\epsilon}{N^2} \sum_{k,l=1}^N g_4(2\phi_k-\phi_l-\phi_j) + \frac{\epsilon}{N^3}\sum_{k,l,m=1}^N g_5(\phi_k+\phi_l-\phi_m-\phi_j)\\
    &\quad + \epsilon \tilde g_j(\phi_1,\dots,\phi_N) + \mathcal O(\epsilon^2)
\end{split}
\end{align}
for fixed $0<\lambda<\lambda_0$ in the limit $\epsilon\to 0$, where $\tilde \Omega(\phi,\epsilon)$ is independent of $j$ and
\begin{align*}
    g_2(phi) &= \xi^0_1\cos(\phi + \chi_1^1) + \lambda \xi^1_1\cos(\phi + \chi_1^1) + \lambda \xi_2^1\cos(2\phi + \chi_2^1),\\
    g_3(\phi) &= \lambda \xi_3^1 \cos(\phi + \chi_3^1),\\
    g_4(\phi) &= \lambda \xi_4^1 \cos(\phi + \chi_4^1),\\
    g_5(\phi) &= \lambda \xi_5^1 \cos(\phi + \chi_5^1).
\end{align*}
The constants $\xi_i^j$ and $\chi_i^j$ are generically non-zero. The error term satisfies
\begin{align*}
    \tilde g(\phi_1,\dots,\phi_N) = \mathcal O(\lambda^2)
\end{align*}
uniformly in the phases $\phi_k$. The truncation of \eqref{eq:derivation_higher_order_approximation} by removing $\tilde g$ and $\mathcal O(\epsilon^2)$ terms is valid over time intervals $0<t<\tilde t$ where $\tilde t = \mathcal O(\epsilon^{-1}\lambda^{-2})$ in the limit $0<\epsilon\ll \lambda \ll 1$. In particular, for any $N$, this approximation involves up to four interacting phases.
\end{theorem}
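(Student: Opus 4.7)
The plan is to combine three classical ingredients: $S_N$-equivariant Hopf normal form theory at the bifurcation point, persistence of normally hyperbolic invariant manifolds to produce the attracting torus, and a phase reduction on this torus to extract the explicit higher-order phase interactions.

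First, I would reduce \eqref{eq:derivation_center_manifold} to its Hopf normal form. For $\epsilon=0$ the uncoupled equation $\dot z = f_\lambda(z)$ undergoes a generic supercritical Hopf bifurcation, so after a near-identity coordinate change $f_\lambda(z) = (\lambda + i\omega) z - c|z|^2 z + \mathcal O(|z|^5)$ with $\Re(c)>0$, and for $\lambda\in(0,\lambda_0)$ the origin is surrounded by an attracting limit cycle of radius $r_*(\lambda) = \mathcal O(\sqrt{\lambda})$. The product system at $\epsilon=0$ therefore has an attracting invariant $N$-torus $\mathcal T_\lambda = \{|z_j| = r_*(\lambda)\}$, and since its transverse rate of attraction is $\mathcal O(\lambda)$ it is normally hyperbolic. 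Fenichel--Hirsch--Pugh--Shub persistence then yields $\epsilon_0(\lambda) > 0$ such that for $|\epsilon| < \epsilon_0(\lambda)$ the coupled system admits an attracting $C^r$-smooth invariant torus $\mathcal T_{\lambda,\epsilon}$, writable as a graph $z_j = (r_*(\lambda) + R_j(\phi,\epsilon))\, e^{i\phi_j}$ with radial correction $R_j = \mathcal O(\epsilon/\lambda)$ inherited from the degenerating contraction rate.

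Next, the specific form of the phase dynamics is dictated by equivariance. The $S_N$-symmetry under permutation of oscillators, combined with the $S^1$ Hopf phase-shift symmetry $z_k\mapsto e^{i\theta}z_k$ of the truncated normal form, constrains $g_\lambda(z_1;z_2,\dots,z_N)$ to be a polynomial in the $z_k,\bar z_k$ whose monomials carry exactly one more $z$ than $\bar z$ overall and are invariant under permutations of $z_2,\dots,z_N$. Enumerating such monomials up to total degree three, with $|z_k|\sim\sqrt\lambda$ so that each additional factor of $|z_k|^2$ injects a factor of $\lambda$, produces precisely the four coupling motifs $g_2,g_3,g_4,g_5$ appearing in the theorem, with generically nonzero coefficients $\xi_i^j,\chi_i^j$. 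Substituting the graph description of $\mathcal T_{\lambda,\epsilon}$ into \eqref{eq:derivation_center_manifold} and projecting onto the tangential (phase) component yields \eqref{eq:derivation_higher_order_approximation}, while the remainder $\tilde g$ collects the monomials of degree $\ge 5$ in $z$ and therefore satisfies $\tilde g = \mathcal O(\lambda^2)$ uniformly in the phases once the amplitude scaling $|z|\sim\sqrt\lambda$ is undone. Finally, the validity window $0 < t < \mathcal O(\epsilon^{-1}\lambda^{-2})$ is read off from a Gronwall estimate: the drift produced by $\epsilon\tilde g = \mathcal O(\epsilon\lambda^2)$ accumulates to $\mathcal O(1)$ exactly on this interval.

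The main technical obstacle is the bookkeeping in the persistence and reduction steps as $\lambda \to 0$: the normal hyperbolicity rate degenerates linearly in $\lambda$, so the constants in Fenichel's theorem depend on $\lambda$, and one must balance $\epsilon$ against $\lambda$ (taking $\epsilon_0(\lambda)$ to vanish sufficiently fast) so that the graph-transform iteration converges to the required smoothness order $r$ and the remainder bounds are genuinely uniform in the phase variables. A compact way to organise the equivariant monomial enumeration is to work with generators of the ring of $S_{N-1}\times S^1$-invariant polynomials in $(z_2,\dots,z_N)$ of the prescribed circle-weight; once this ring is in place, the four coupling functions are read off from its low-degree generators and the error estimate becomes a straightforward degree count weighted by the scaling $|z|\sim\sqrt\lambda$.
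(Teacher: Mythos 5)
The paper does not prove this statement itself---it is quoted from Ashwin and Rodrigues \cite{Ashwin2016a}, whose Theorem~3.2 is established by exactly the route you outline: the $S_N$-equivariant Hopf normal form (with the $S^1$ phase-shift symmetry of the truncation) enumerating the admissible cubic coupling monomials, persistence of the normally hyperbolic product torus of limit cycles of radius $\mathcal O(\sqrt{\lambda})$ for $|\epsilon|<\epsilon_0(\lambda)$, and phase reduction on that torus with the $\sqrt{\lambda}$-amplitude degree count producing $g_2,\dots,g_5$, the $\mathcal O(\lambda^2)$ remainder, and the $\mathcal O(\epsilon^{-1}\lambda^{-2})$ validity window. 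Your sketch is therefore essentially the same argument as the cited proof, and you already flag the two genuine technical points: the circle symmetry is only a normal-form symmetry, so its breaking must be absorbed into $\tilde g$ and the $\mathcal O(\epsilon^2)$ terms, and the normal-hyperbolicity constants degenerate as $\lambda\to 0$, so $\epsilon_0$ must shrink with $\lambda$ (and with the desired smoothness order $r$).
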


Specifically, as stated in \cite{Ashwin2016a}, over a longer timescale a system of the form
\begin{align}
\begin{split}
    \frac{\mathrm d}{\mathrm dt}\phi_j  &=\tilde \Omega(\phi, \epsilon) +  \frac{\epsilon}{N}\sum_{k=1}^N g_2(\phi_k-\phi_j) + \frac{\epsilon}{N^2}\sum_{k,l=1}^N g_3(\phi_k+\phi_l-2\phi_j)\\
    &\quad + \frac{\epsilon}{N^2} \sum_{k,l=1}^N g_4(2\phi_k-\phi_l-\phi_j) + \frac{\epsilon}{N^3}\sum_{k,l,m=1}^N g_5(\phi_k+\phi_l-\phi_m-\phi_j)\\
\end{split}
\end{align}
better approximates the dynamics on the invariant torus than a system without higher-order coupling.

\end{document}